\numberwithin{equation}{section}
\newcommand{\rr}{\mathbb{R}}
\newcommand{\lan}{\langle}
\newcommand{\ran}{\rangle}
\newcommand{\be}{\begin{eqnarray*}}
\newcommand{\bel}{\begin{eqnarray}}
\newcommand{\ee}{\end{eqnarray*}}
\newcommand{\eel}{\end{eqnarray}}
\newcommand{\ba}{\begin{aligned}}
\newcommand{\ea}{\end{aligned}}
\newcommand{\de}{\Delta}
\newcommand{\al}{\alpha}
\newcommand{\na}{\nabla}
\newcommand{\ep}{\epsilon}
\newcommand{\ra}{\rightarrow}
\newcommand{\pa}{\partial}
\newcommand{\nb}{}
\newcommand{\nq}{{\neq}}
\newcommand{\myb}[1]{{ #1 }}
\newcommand{\myr}[1]{}
\newcommand{\wh}{\widehat}
\newcommand{\pay}{\partial_{y}}
\newcommand{\om}{\Omega}
\newtheorem{theorem}{Theorem}
\newtheorem{lem}{Lemma}
\newtheorem{pro}{Proposition}
\newtheorem{rmk}{Remark}
\numberwithin{theorem}{section}
\numberwithin{cor}{section}
\numberwithin{pro}{section}
\numberwithin{rmk}{section}
\numberwithin{lem}{section}
\newcommand{\norm}[1]{\left\lVert#1\right\rVert}
\newcommand\Torus{{\mathbb T}}
\title[Enhanced Dissipation and Blow-up Suppression]{Enhanced dissipation and blow-up suppression in a chemotaxis-fluid system}
\date{\today}
\author{Siming He}
\address{Department of Mathematics, Duke University, Durham, NC 27708
\& Department of Mathematics, University of South Carolina, Columbia, SC, 29208}
\email{simhe@math.duke.edu, siming@mailbox.sc.edu}
\thanks{\textbf{Acknowledgment.} This work was supported in part by NSF grants DMS 2006660, DMS 2304392, DMS 2006372. The author would like to thank Jacob Bedrossian, Alexander Kiselev, Eitan Tadmor and Fei Wang for many helpful discussions. The author would also like to thank Kevin Hu and anonymous referees for pointing out typos in the first version of the manuscript and providing many insightful suggestions.}
\begin{document}
\begin{abstract}
In this paper, we investigate a coupled Patlak-Keller-Segel-Navier-Stokes (PKS-NS) system. We show that globally regular solutions with arbitrary large cell populations exist. The primary blowup suppression mechanism is the shear flow mixing induced enhanced dissipation phenomena.
\end{abstract}
\maketitle
\setcounter{tocdepth}{1}{\tableofcontents}

\section{Introduction}
We consider the coupled Patlak-Keller-Segel-Navier-Stokes (PKS-NS) system modeling the chemotaxis phenomenon in a moving fluid:
\begin{align}\label{pePKS-NS}
\left\{\begin{array}{rrrrr}\ba
\pa_t n\ +\ &v\cdot \na n\ +\ \kappa  \na \cdot( n\na c)\ =\ \kappa  \de n,\ \ (1-\de) c\ = \ n,\\
\pa_t v\ +\ &(v\cdot \na )v\ +\ \na p\ =\ \nu  \de v\  +\ \kappa   n\na c,\quad\na\cdot v\ =\ 0,\\
n(t=&0 )=n_{\text{in}} ,\quad v(t=0 )=v_{\text{in}} , \quad (x,y)\in  \Torus\times\rr.\ea\end{array}\right.
\end{align}
Here $n$ denotes the cell density, and $c$ is the chemoattractant density. The divergence-free vector field $v$ indicates the ambient fluid velocity. The first equation (Patlak-Keller-Segel equation) describes the time evolution of the cell density subject to transportation by ambient fluid flow $v$, aggregation trigged by chemotaxis, and diffusion in the media. The aggregation and diffusion take effects on a time scale $\mathcal{O}(\kappa^{-1}),\,\kappa\in(0,\infty)$. The cells move towards higher concentrations of the chemoattractants. In the meantime, they secrete the chemoattractants to re-enhance this aggregation effect. By assuming that the secretion and redistribution of chemoattractants happen at a fast timescale, we establish an elliptic-type partial differential relation between the density distributions, $n$ and $c$. The equation (Navier-Stokes equation) on the divergence-free vector field $v$ describes the fluid motion subject to force. The parameter $\nu$ is the inverse Reynold number, and the scalar function $p$ denotes the pressure that ensures the divergence-free condition. The fluid exerts friction force on the moving cells to guarantee that they move without acceleration. Hence, Newton's law predicts that there exists a reaction force from the cells to the fluid. The coupling $n \na c$ in the {N}avier-{S}tokes equation models this interaction. The same forcing appears in the {N}ernst-{P}lanck-{N}avier-{S}tokes system, see, e.g., \cite{ConstantinIgnatova19}.

If no ambient fluid flows are present, i.e., $v\equiv 0$, the coupled system \eqref{pePKS-NS} simplifies to a variant of the classical parabolic-elliptic Patlak-Keller-Segel (PKS) equation
\begin{align}\pa_t n+\kappa\na\cdot(n \na c)=\kappa \de n,\quad -\de c=n.\label{PKS}
\end{align} 
The equation \eqref{PKS} is derived by C. Patlak \cite{Patlak}, and E. Keller and L. Segel \cite{KS}. Simplified models are proposed by V. Nanjundiah, \cite{Nanjundiah1973}. The literature on the PKS model is extensive, and we refer the interested readers to the representative works, \cite{JagerLuckhaus92, Nagai95, NagaiSenbaYoshida97,
Biler06,Hortsmann,BlanchetEJDE06,CalvezCorrias,BlanchetCarrilloMasmoudi08,CarrilloRosado10,  BMKS13,Bedrossian15, EganaMischler16}, and the references therein. We summarize the main results on the plane $\rr^2$ as follows. Thanks to the divergence structure of the PKS equations \eqref{PKS}, the total cell population/mass is conserved over time, i.e., $M:=\|n(t)\|_{L^1}=\|n_{\text{in}}\|_{L^1}$. The long-time behavior of the equation \eqref{PKS} hinges on the total mass $M$. Suppose the initial cell density has a finite second moment and a total mass $M$ strictly less than $8\pi$. In that case, the unique solutions to \eqref{PKS} become smooth instantly and exist for all time, see, e.g.,  \cite{BlanchetEJDE06,CarrilloRosado10,BMKS13, EganaMischler16,Wei181,HsiehYu20}.  A key observation in deriving sharp regularity results is that the Patlak-Keller-Segel equations have natural dissipative free energy
\begin{align}
E:=\int n\log n -\frac{1}{2}nc dV. 
\end{align}
On the contrary, if the initial mass is strictly larger than $8\pi$, the solutions with finite second moment blow up in finite time, e.g., \cite{JagerLuckhaus92,CalvezCarrillo06,BlanchetEJDE06,CalvezCorrias}. The refined description of the singularities is provided in work \cite{HerreroVelazquez96,Velazquez02,Velazquez04I,Velazquez04II,RaphaelSchweyer14,CollotGhoulMasmoudiNguyen19, CollotGhoulMasmoudiNguyen192}. In the borderline case, $M=8\pi$, the solutions with finite second moments form Dirac-type singularities as the time approaches infinity, \cite{BlanchetCarrilloMasmoudi08, GhoulMasmoudi18,DaviladelPinoDolbeaultMussoWei19}.

If there is ambient fluid flow, the long-time dynamics of the systems \eqref{pePKS-NS} are delicate. In the pioneering work, \cite{KiselevXu15}, A. Kiselev and X. Xu show that if the fluid vector field $v$ is relaxation enhancing in the sense of \cite{ConstantinEtAl08}, then by choosing a large enough amplitude ($\|v\|_\infty$), the chemotactic blowups are suppressed. Their analysis is later generalized in \cite{IyerXuZlatos} to a broader class of fluid vector fields. Furthermore, in work, \cite{BedrossianHe16, He}, the authors show that strong shear flows can suppress the blowups through a fast dimension reduction process. Last but not least, the authors of \cite{HeTadmor172, HeTadmorZlatos21} exploit the fast-spreading scenario of the hyperbolic and quenching shear flows to reach the same goal. In work mentioned above, the ambient fluid velocity fields $v$ are passive because the dynamics of the cell density do not alter the fluid itself. If there is active coupling between the cell dynamics and the fluid motion, the only known result is \cite{ZengZhangZi21}. In this work, the authors prove that if the underlying fluid flow is close to the Couette flow, strong enough shear suppresses the blowup of a specific type of PKS-NS system.

One can regard the system \eqref{pePKS-NS} as one among many attempts to model the chemotaxis phenomena in a fluid. The literature on coupled chemotaxis-fluid equations is vast, and we refer the interested readers to the papers \cite{Lorz10,Lorz12,LiuLorz11,DuanLorzMarkowich10, FrancescoLorzMarkowich10,Tuval05,Winkler12,TaoWinkler,ChaeKangLee13,KozonoMiuraSugiyama,Winkler20} and the references therein. Many of these works investigate coupled systems involving fully parabolic Patlak-Keller-Segel and Navier-Stokes equations. The parabolic nature of the chemical equations complicates the analysis. For example,  I. Tuval et al.  proposed the following model \cite{Tuval05},
\begin{align}
\left\{\begin{array}{rrrrr}\ba
\pa_t n+&v\cdot \na n+\na \cdot( n\na c)=\de n,\\
\pa_t c+&v\cdot \na c=\de c-n f(c),\\
\pa_t v+&(v\cdot \na )v+\na p=\de v +n\na \phi,\quad\na\cdot v=0.\ea\end{array}\right.
\end{align}
Here a parabolic equation governs the dynamics of the chemicals (oxygen), and the coupling $n\na \phi$ in the fluid equation models the buoyancy effect. The regularity and long-time behaviors of the system are explored in \cite{Winkler14, Winkler16, Winkler17, Winkler21}. 
On the other hand, A. Lorz \cite{Lorz12}, and H. Kozono et al. \cite{KozonoMiuraSugiyama} proposed models whose chemical densities are determined through elliptic-type relations.

A new feature of the coupled system \eqref{pePKS-NS} is that it retains dissipative free energy 
\begin{align*}
F=\int n\log n  -\frac{1}{2}  nc  +\frac{1}{2}  |v|^2 dV. 
\end{align*} 
The dissipative free energy, together with the logarithmic Hardy-Littlewood-Sobolev inequality, yields global regularity of the solutions to a variant of \eqref{pePKS-NS} in the entire subcritical mass regime, i.e., $M<8\pi$ (\cite{GongHe20}). The critical mass case is analyzed in \cite{LaiWeiZhou21}. In the supercritical case $M>8\pi$, there exists a solution with finite-time blow-up (\cite{GongHe20}). 
 
We consider the system perturbed around the Couette flow $v(x,y)=(y,0)$, a stationary solution to the Naver-Stokes equation. By decomposing the velocity field as $v=y + {u}$ and writing the fluid equation in vorticity form, we end up with the system:
\begin{align}
\left\{\begin{array}{rrrrr}\ba
\pa_t n+&y\pa_x  n+ {u}\cdot \na n+\kappa\na \cdot( n\na c)=\kappa\de n,\ \ (1-\de)  c=n,\\
\pa_t  {\omega}+&y\pa_x {\omega}+ {u}\cdot \na  {\omega} =\nu\de {\omega} +\kappa \na^\perp\cdot (n\na c),\quad  {u}=\na^\perp \de ^{-1} {\omega},\\
n(t=&0)=n_{\text{in}},\quad \omega(t=0 )=\omega_{\text{in}}, \quad (x,y)\in  \Torus\times\rr.\ea\end{array}\right.\label{pePKS-NS-Couette}
\end{align}
Here ${\omega=\na^\perp\cdot u=-\pa_{y}u^{(1)}+\pa_xu^{(2)}}\,$ and $\, \na^\perp=(-\pa_y,\pa_x)$. In this formulation, we can view the coupled system as a nonlinear perturbation of the Naver-Stokes solution $(y,0)$. The problem of suppressing the chemotactic blow-up is now equivalent to a nonlinear stability problem of the Couette flow. The fundamental question in the study of hydrodynamic stability is to determine the functional space $X$ and the parameter $\al\in[0,\infty)$ such that 
\begin{align}
\|\omega_{\text{in}}\|_{X}\lesssim \nu^\al\quad \Rightarrow \quad \text{Stability}.
\end{align}
Here $\nu^\al$ is the stability threshold of the flow associated with the spaces $X$. The nonlinear stability threshold of the Couette flow has attracted much attention in the last decade, see, e.g., \cite{BMV14, BGM15I, BGM15II, BGM15III, BVW16, ChenLiWeiZhang18, MasmoudiZhao22, MasmoudiZhao20}. A complete survey of the literature is out of the scope of this paper. Therefore, we highlight some of the work focusing on a 2-dimensional setting. In \cite{BMV14}, J. Bedrossian, N. Masmoudi, and V. Vicol showed that the stability threshold of the $2D$-Couette flow is $\mathcal{O}(1)$ in the Gevrey spaces. The stability thresholds associated with the Sobolev spaces are shown to be  $\mathcal{O}(\nu^{1/2})$ on the cylinder $\Torus\times \rr$ (J. Bedrossian, V. Vicol and F. Wang, \cite{BVW16}) and in the channel (Q. Chen, T. Li, D. Wei, and Z. Zhang, \cite{ChenLiWeiZhang18}). Later, N. Masmoudi and W. Zhao considered higher-order Sobolev norms ($H^{49}$) and improved the threshold to $\mathcal{O}(\nu^{1/3})$, \cite{MasmoudiZhao22}. The enhanced dissipation phenomenon of the Couette flow plays an essential role in the above works. We also refer the interested readers to the work \cite{Liss20, WenWuZhang21, MasmoudiSaidHouariZhao20, Zillinger21, Zillinger212} for the detailed stability analysis of Couette flow in MHD, Boussinesq equations. For the enhanced dissipation phenomena associated with other shear flows, we refer the interested readers to the work \cite{AlbrittonBeekieNovack21, BCZ15, Wei18, He21, WeiZhangZhao20, LiWeiZhang20, LiZhao21, CotiZelatiDrivas19}, and the references therein.

Our main result is that global-in-time regular solutions with arbitrarily large mass $M$ to the system \eqref{pePKS-NS} exist.
\begin{theorem}\label{thm:main}
Consider the equation \eqref{pePKS-NS-Couette} subject to  initial conditions ${n_\mathrm{in}\geq0},\ n_\mathrm{in}\in L^1\cap H^s(\Torus\times\rr)$, ${n_\mathrm{in}|y|^2\in L^1(\Torus\times\rr)}$,  $\omega_{\mathrm{in}}\in H^s(\Torus\times\rr), \, { 5\leq s\in\mathbb{N}}$. Assume that the parameters $\kappa, \ \nu$ are in the regime $ 0<\kappa\leq\nu\leq1$. There exists a threshold $\ep_0(\|n_\mathrm{in}\|_{L^1\cap H^s})\in(0,1) $ such that if the following relations hold
\begin{align}\label{initial_data}
\|\omega_{\mathrm{in}}\|_{H^s}\leq \ep\nu^{1/2}, \quad \kappa= \ep\nu,\quad 0<\ep\leq \ep_0, 
\end{align}
the regular solutions to \eqref{pePKS-NS-Couette} exist for all time. \myb{Moreover, there exists a universal constant $\delta\in(0,1)$ such that the following  enhanced dissipation estimate holds 
 \begin{align}\label{ED}
 \left\|e^{\delta \kappa^{1/3}|\pa_x|^{2/3}t}\left(n-\frac{1}{|\Torus|}\int_\Torus n dx\right)\right\|_{L^2} +\left\|e^{\delta \kappa^{1/3}|\pa_x|^{2/3}t}\left(\omega-\frac{1}{|\Torus|}\int_\Torus \omega dx\right)\right\|_{L^2}\leq \mathcal{B}(\|n_{\mathrm{in}}\|_{L^1\cap H^s}), \quad \forall t\in[0,\infty).
 \end{align}}
{Here the bound $\mathcal{B}$ only depends on the initial data.} 
\end{theorem}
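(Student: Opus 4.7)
The proof proceeds through a bootstrap argument anchored on the enhanced-dissipation mechanism generated by the Couette background. Local well-posedness of \eqref{pePKS-NS-Couette} in $H^s$ (with $s\geq 5$) is standard via energy/commutator estimates together with the elliptic bound $\|c\|_{H^{s+2}}\lesssim\|n\|_{H^s}$, yielding a maximal existence time $T^\star>0$. The strategy is to promote $T^\star=\infty$ by closing a set of a priori estimates, valid on $[0,T^\star)$, built on the $x$-average / non-zero mode decomposition
\begin{align*}
n=\bar n+n_{\neq},\qquad \omega=\bar\omega+\omega_{\neq},\qquad \bar f(y):=\frac{1}{|\Torus|}\int_{\Torus}f(x,y)\,dx.
\end{align*}
The shear $y\pa_x$ annihilates the $x$-average and produces enhanced dissipation on the complement, so the two regimes must be treated by different tools.

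The zero mode $\bar n$ is a one-dimensional, nonnegative, mass-conserving object. Because $\overline{n\,\pa_x c}\equiv 0$ by integration by parts in $x$, the equation for $\bar n$ reduces to a 1D drift--diffusion
\begin{align*}
\pa_t\bar n+\pa_y\bigl(\overline{u^{(2)}_{\neq}\,n_{\neq}}\bigr)+\kappa\,\pa_y\bigl(\bar n\,\pa_y\bar c+\overline{n_{\neq}\,\pa_y c_{\neq}}\bigr)=\kappa\,\pa_y^2\bar n,
\end{align*}
i.e., a 1D Patlak--Keller--Segel-type equation (unconditionally subcritical) perturbed by quadratic interactions of non-zero modes. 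For the non-zero modes I would run a bootstrap on
\begin{align*}
&\text{(A)}\quad \sup_{t<T^\star}\|\omega(t)\|_{H^s}\leq 4\ep\nu^{1/2},\\
&\text{(B)}\quad \bigl\|e^{\delta\kappa^{1/3}|\pa_x|^{2/3}t}\,\omega_{\neq}\bigr\|_{L^\infty_t H^s\,\cap\, L^2_t H^s}\leq C_1\,\ep\,\nu^{1/2},\\
&\text{(C)}\quad \sup_{t<T^\star}\|n(t)\|_{H^s}\leq \mathcal{B}_1,\\
&\text{(D)}\quad \bigl\|e^{\delta\kappa^{1/3}|\pa_x|^{2/3}t}\,n_{\neq}\bigr\|_{L^\infty_t H^s\,\cap\, L^2_t H^s}\leq \mathcal{B}_2,
\end{align*}
and aim to improve each constant by a factor of $\tfrac{1}{2}$ under the hypothesis $\ep\leq\ep_0$. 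The linear part of (B) and (D) comes from hypocoercivity estimates for $y\pa_x-\nu\de$ and $y\pa_x-\kappa\de$ on $\Torus\times\R$, in the spirit of \cite{BCZ15,BedrossianHe16,BVW16}: one builds a time-dependent, weighted energy in which cross terms between $\pa_x$ and a suitably shifted $\pa_y$ generate a coercive dissipation of order $\kappa^{1/3}|\pa_x|^{2/3}$. The nonlinear terms are split into $0\!\times\! 0$, $0\!\times\!\neq$, $\neq\!\times\! 0$ and $\neq\!\times\!\neq$ pieces, and estimated using the smoothing from $(1-\de)^{-1}$, the $|\pa_x|^{-1}$ gain of Biot--Savart on $\omega_{\neq}$, and the time-integrated bounds (B), (D).

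The principal obstacle is the chemotactic coupling $\kappa\,n_{\neq}\,\pa_y\bar c+\kappa\,\bar n\,\pa_y c_{\neq}$ in the density equation, and its reaction-force analogue in the vorticity equation, since these $\neq\!\times\! 0$ interactions inherit no decay from the shear. They are only affordable if the enhanced-dissipation time-scale $\kappa^{-1/3}$ beats the chemotactic time-scale $\kappa^{-1}$, which requires $\kappa\ll 1$; the balance $\kappa=\ep\nu$ is tuned precisely so that, after pairing each bad term against the exponential weight $e^{\delta\kappa^{1/3}|\pa_x|^{2/3}t}$ and applying Young's inequality in time, every nonlinearity acquires an $\ep^{1/2}$ (or better) prefactor allowing the bootstrap to close. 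The density feedback $\bar n\,\pa_y\bar c$ in the fluid equation is $x$-independent, hence lies in the kernel of $y\pa_x$ and only drives $\bar\omega$, where the full $\nu\pa_y^2$ viscosity and 1D interpolation keep $\bar\omega$ uniformly bounded. A final Gr\"onwall argument on the combined $H^s$ energies, fed by the enhanced-dissipation bounds (B) and (D), upgrades these a priori estimates to the uniform bound $\mathcal{B}(\|n_{\mathrm{in}}\|_{L^1\cap H^s})$ asserted in \eqref{ED}, whence $T^\star=\infty$.
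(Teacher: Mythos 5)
Your proposal tracks the paper's strategy closely: pass to the $x$-average / non-zero-mode decomposition, run a four-part bootstrap (zero and non-zero modes of $n$ and $\omega$), obtain enhanced dissipation of order $\kappa^{1/3}|\pa_x|^{2/3}$ via a BVW-style shifted/weighted energy, treat $\bar n$ as an unconditionally subcritical one-dimensional PKS equation (Nash plus Gagliardo--Nirenberg), and tune $\kappa=\ep\nu$ so every nonlinearity gains a small prefactor. The paper implements the enhanced-dissipation step by first changing coordinates to $(z,y)=(x-ty,y)$ and then introducing explicit $\arctan$-based Fourier multipliers $M_\kappa,M_\nu$ and $A_\iota=M_\iota e^{\delta\kappa^{1/3}|k|^{2/3}t}\lan k,\eta\ran^s$ rather than a hypocoercivity functional in physical variables; the two are parallel realizations of the same cross-term mechanism and are not a substantive difference.

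One point in your write-up is inaccurate and, if taken at face value, would prevent the bootstrap from closing. You assert that the zero-mode coupling $\bar n\,\pa_y\bar c$ in the fluid equation ``only drives $\bar\omega$, where the full $\nu\pa_y^2$ viscosity and 1D interpolation keep $\bar\omega$ uniformly bounded.'' In fact there is an exact null structure: since the $x$-independent part of the force has the form $(0,\bar n\pa_y\bar c)(y)$, its curl is $\pa_x(\bar n\pa_y\bar c)-\pa_y(0)\equiv 0$, equivalently $\bigl(\na^\perp\cdot(n_0\na c_0)\bigr)_0\equiv 0$, so this term never appears in the $\bar\omega$ equation at all. This cancellation is essential, not cosmetic: the bootstrap requires $\|\bar\omega\|_{H^s}\lesssim \ep\nu^{1/2}$, but $\kappa\,\bar n\pa_y\bar c$ is of size $\kappa M^2=\ep\nu M^2$, and integrating the 1D heat semigroup over the $\mathcal O(\nu^{-1})$ viscous time scale would produce a contribution of size $\ep M^2$ to $\bar\omega$, which is not $\lesssim\ep\nu^{1/2}$ unless $M$ is small. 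Without noticing the cancellation you would be stuck precisely at the $0\times 0$ piece of the vorticity nonlinearity. (The paper relies on a companion null structure $(U_0\cdot\na\,\bar\omega)_0=(U_0^{(1)}\pa_z\bar\omega)_0=0$ as well.) With that correction your argument and the paper's coincide.
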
          

\begin{rmk}
We compare our result with that of \cite{ZengZhangZi21}. In \cite{ZengZhangZi21}, the authors considered a similar system with buoyancy force coupling between the fluid and cell-density equations and showed suppression of blowup results. However, here our system \eqref{pePKS-NS} involves a nonlinear coupling in the fluid equation, which complicates the analysis. Moreover, the methods we employ here are different from that of \cite{ZengZhangZi21}.
\end{rmk}

\begin{rmk}
The stability threshold in Theorem \ref{thm:main} matches that of the paper \cite{BVW16}. However, we expect that by using more complicated techniques developed in \cite{MasmoudiZhao22}, one can improve the stability threshold to $\ep\nu^{1/3}$. We will leave the analysis in later work.  
\end{rmk}
{\begin{rmk}
This paper focuses on the parameter regime where $\kappa\leq \nu$. The method does not seem applicable in the parameter regime $\nu\gg \kappa$. If the viscosity $\nu$ is much greater than $\kappa$, the biological phenomena take place on time scale $\mathcal{O}(\kappa^{-1})$, which can be much shorter than the fluid dynamics time scale. As a result, the dynamics of the cell evolution have a nontrivial impact on the fluid motion. Thanks to this nontrivial interaction, the fluid flow might no longer be stable. Hence it is a great problem to understand the long time dynamics of the system in this regime.  
\end{rmk}}

The remaining part of the paper is organized as follows. In Section \ref{Sect:Bootstrap}, we sketch the proof of Theorem \ref{thm:main}. In Section \ref{Sect:Fluid}, we provide estimates of the fluid equation. In Section \ref{Sect:Cell}, the estimates of the cell density are derived. Finally, we collect technical lemmas in the appendix.    
\section{Sketch of the Proof}\label{Sect:Bootstrap}
In this section, we present the main idea of the proof of  Theorem \ref{thm:main}. 

Similar to the works  \cite{Kelvin87,Orr07,Zillinger2014, BM13}, we first consider a new coordinate system
\begin{align}\label{zy_coordinate}
z=x-ty,\quad y=y. 
\end{align} we have the following:
\begin{align}\label{PKS_NS_zy_coordinate}
\pa_t N+  U\cdot \na_{L} N+\kappa \na_L\cdot(N\na_L C )=\kappa\de_L N,\quad {C=(1-\de_L)^{-1}N};\\
\pa_t \Omega+  U\cdot \na_L \Omega=\nu \de_L\Omega+\kappa\na_L^\perp\cdot (N\na_L C),\quad U=-\na_L^\perp(-\de_L)^{-1}\Omega.
\end{align}
Here the following notations are adopted:
\begin{align}
\na_L:=\left(\begin{array}{cc}\pa_z\\ \pa_y^t \end{array}\right):=\left(\begin{array}{cc}\pa_z\\ \pa_y-t\pa_z\end{array}\right), \quad \na_L^\perp:=\left(\begin{array}{cc} -\pa_y^t\\\pa_z \end{array}\right),\quad \de_L=\na_L\cdot \na_L=\pa_{zz}+(\pa_y-t\pa_z)^2.
\end{align}
Since the enhanced dissipation phenomenon is heterogeneous, we consider the $z$-average $f_0$ and the remainder $f_\nq$ of functions on $\Torus\times\rr$:
\begin{align}f_0(t,y)=\frac{1}{|\mathbb{T}|}\int_{\Torus} f(t,z,y)dz,\quad 
f_\nq(t,z,y)=f(t,z,y)-f_0(t,y).\label{x_average_remainder}
\end{align}
We decompose the solutions $N,\om$ into the {$z$}-average and remainder:
\begin{subequations}\label{PKSNS_New_Coordinate}
\begin{align}
\pa_t N_\nq+(U\cdot \na_L N)_\nq=&\kappa \de_L N_\nq-\kappa (\na_L\cdot(N\na_LC))_\nq;\label{N_nq}\\
\pa_t N_0+(U\cdot \na_L N)_0=&\kappa\pa_{yy}N_0-\kappa(\na _L\cdot (N\na_L C ))_0;\label{N_0}\\  
\pa_t \Omega_{\neq}+(U\cdot \na _L \Omega)_{\neq}=&\nu\de_L\Omega_{\neq}+\kappa (\na_L^\perp\cdot(N\na_LC))_{\neq},\label{Omega_nq}\\
\pa_t \Omega_{0}+(U\cdot \na _L \Omega)_{0}=&\nu\pa_{yy}\Omega_{0}+\kappa (\na_L^\perp\cdot(N\na_LC))_{0} .\label{Omega_0}
\end{align}
\end{subequations}

To analyze the above equations, we apply the spacial Fourier transform $f(t,z,y)\xrightarrow{\mathcal{F}} \wh f(t,k,\eta)$.  Next we introduce the following multipliers $W_{\kappa},\, W_{\nu},\,\mathcal W$, which are motivated by \cite{BVW16}:
\begin{align}
W_{\kappa}(t,k,\eta) = &\pi-\arctan\left(\kappa^{1/3}|k|^{2/3}\left(t-\frac{\eta}{k}\right)\right)\mathbf{1}_{0<|k|\leq \kappa^{-1/2}};\label{W_kappa}\\
W_{\nu}(t,k,\eta) = &\pi-\arctan\left(\nu^{1/3}|k|^{2/3}\left(t-\frac{\eta}{k}\right)\right)\mathbf{1}_{0<|k|\leq \nu^{-1/2}};\label{W_nu}\\
\mathcal W(t,k,\eta)=&\pi-\arctan\left(t-\frac{\eta}{k}\right)\mathbf{1}_{k\neq0}.\label{W}
\end{align}
We observe that these multiplier functions take values in $[\frac{\pi}{2}, \frac{3\pi}{2}]$. Moreover, they are monotonically decreasing in time. We further define  the following multipliers associated with the cell density $N$ and the vorticity $\om$:
\begin{align}
M_\kappa=W_\kappa \mathcal W,\quad M_\nu=W_\nu \mathcal W.\label{M_N_Omega}
\end{align} 
Further define that 
\begin{align}\label{A_N_Omega}
A_\iota(t,k,\eta) = &M_\iota(t,k,\eta) {e^{\delta\kappa^{1/3}|k|^{2/3}t}}(1+ |k|^2+|\eta|^2)^{s/2},\quad \iota\in\{\nu,\kappa\},\quad s\geq 0.
\end{align}
Here $0<\delta<1$ is a universal constant. We note that the $A_\nu$-multiplier and $A_\kappa$-multiplier share the same exponential factor $e^{\delta \kappa^{1/3}|k|^{2/3}t}$. 
The multipliers $\{A_\kappa, M_\kappa\}$ will act on the cell density and chemical density $N,\, C$ and $\{A_\nu, M_\nu\}$ will act on the vorticity and velocity of the fluid $\Omega,\,  U$. The properties of these multipliers are collected in the appendix. 


Next, we present a local well-posedness result, which can be proven through standard argument. 
\begin{theorem}\label{thm:local_existence}
Consider solutions $N,\om$ to the  equation \eqref{PKS_NS_zy_coordinate} subject to initial data ${0\leq} N_{\mathrm{in}}\in L^1\cap H^{s}(\Torus\times \rr)$, ${N_\mathrm{in}|y|^2\in L^1(\Torus\times\rr)}$,  $\om_{\mathrm{in}}\in H^{s}(\Torus\times\rr),\,  {3\leq s\in\mathbb N}$. There exists a small constant $T_\varepsilon(\|N_{\mathrm{in}}\|_{L^1\cap H^s},\|\om_{\mathrm{in}}\|_{H^{s}})$ such that the unique solution exists on the time interval  $[0,T_\varepsilon]$. Moreover, $N\geq 0$ on $[0, T_\varepsilon].$
\end{theorem}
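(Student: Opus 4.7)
The plan is a standard Picard iteration in the shifted coordinates $(z,y)$, linearizing the coupled system at each step. Set $(N^0,\Omega^0)=(N_{\mathrm{in}},\Omega_{\mathrm{in}})$; given $(N^n,\Omega^n)$, put $C^n=(1-\de_L)^{-1}N^n$ and $U^n=-\na_L^\perp(-\de_L)^{-1}\Omega^n$ and let $(N^{n+1},\Omega^{n+1})$ solve the linear parabolic system
\begin{align*}
\pa_t N^{n+1}+U^n\cdot\na_L N^{n+1}+\kappa\na_L\cdot(N^{n+1}\na_L C^n)&=\kappa\de_L N^{n+1},\\
\pa_t \Omega^{n+1}+U^n\cdot\na_L \Omega^{n+1}&=\nu\de_L\Omega^{n+1}+\kappa\na_L^\perp\cdot(N^n\na_L C^n),
\end{align*}
with the prescribed initial data. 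Each step is a classical linear parabolic Cauchy problem with smooth time-dependent coefficients and is uniquely solvable by Galerkin or semigroup methods.

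Next, I would close uniform a priori estimates on a short interval $[0,T_\ep]$ in the norm
\[
\|(N,\Omega)\|_{X_T}:=\sup_{0\leq t\leq T}\left(\|N(t)\|_{L^1}+\||y|^2 N(t)\|_{L^1}+\|N(t)\|_{H^s}+\|\Omega(t)\|_{H^s}\right).
\]
The $H^s$ estimate follows from standard product and commutator inequalities (which require $s>d/2=1$, comfortably satisfied since $s\geq 3$) together with the elliptic bound $\|C^n\|_{H^{s+2}}+\|U^n\|_{H^{s+1}}\lesssim (1+t)^{c}\|(N^n,\Omega^n)\|_{X_T}$, the factor $(1+t)^{c}$ arising from the time-dependent derivative $\pa_y-t\pa_z$ inside $\na_L$. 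The $L^1$ bound is preserved because every nonlinear term is in divergence form, and the second moment is propagated by testing the $N$-equation against $|y|^2$ and controlling the drift by the $L^\infty$-norm of $U^n$ obtained via Sobolev embedding. Choosing $T_\ep$ small, depending only on $\|(N_{\mathrm{in}},\Omega_{\mathrm{in}})\|_{X_0}$, yields the uniform bound $\|(N^{n+1},\Omega^{n+1})\|_{X_{T_\ep}}\leq 2\|(N_{\mathrm{in}},\Omega_{\mathrm{in}})\|_{X_0}$ independent of $n$.

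For convergence and uniqueness, I would prove contraction of the iterates in the weaker norm $L^\infty_t L^2$: the difference $(\delta N,\delta\Omega):=(N^{n+1}-N^n,\Omega^{n+1}-\Omega^n)$ solves a linear equation whose driving coefficients are controlled by the uniform $X_{T_\ep}$-bound above, so after possibly shrinking $T_\ep$ one obtains a factor $\tfrac{1}{2}$. The resulting Cauchy sequence has a strong limit in the weak norm, and weak-$\ast$ compactness together with lower semicontinuity places the limit $(N,\Omega)$ in $X_{T_\ep}$ as a strong solution. Nonnegativity $N\geq 0$ propagates through the iteration by the weak maximum principle: testing the linear $N^{n+1}$ equation against $(N^{n+1})_-$, using $\na_L\cdot U^n=0$ to drop the transport, and noting that $\kappa\de_L C^n$ is bounded (via $\|C^n\|_{H^{s+2}}$ and Sobolev), one obtains a Gronwall inequality for $\|(N^{n+1})_-\|_{L^2}^2$, forcing $(N^{n+1})_-\equiv 0$ whenever $(N_{\mathrm{in}})_-\equiv 0$.

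The main obstacle, though ultimately benign on a short time horizon, is the growth in $t$ of the twisted operators $\na_L$ and $\de_L$: the coefficient $t$ in $\pa_y-t\pa_z$ introduces polynomial factors in $t$ through product and commutator estimates that must also be tracked through the elliptic inversions $(1-\de_L)^{-1}$ and $(-\de_L)^{-1}$. Reverting to the original $(x,y)$ variables makes $\de_L$ equal to the ordinary Laplacian with time-independent ellipticity, so the inversions are uniformly bounded on Sobolev scales and the accumulated $t$-factors from products are absorbed into the smallness of $T_\ep$. The unbounded $y$-direction is handled by the combined $L^1$ and second-moment controls, which decay at infinity enough to justify every integration by parts in the energy identities.
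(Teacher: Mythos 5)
Your proposal is correct and matches the paper's approach to local well-posedness (the paper itself only sketches it: ``standard energy argument'' in $H^s_{z,y}$, with compactness via Aubin--Lions and the observation that reverting to $(x,y)$ removes the $t$-dependence of $\Delta_L$). The one place where you take a genuinely different route is the nonnegativity of $N$. The paper proves $N\geq 0$ directly on the \emph{nonlinear} solution by a Stampacchia-type regularization: it introduces convex approximants $j_\ep$ of $j(s)=\max\{-s,0\}$, derives the identity
\[
\int j_\ep(N(t))\,dV-\int j_\ep(N(0))\,dV=\int_0^t\!\int\Big(\int_0^{N}j_\ep''(s)\,s\,ds\Big)(N-C)\,dV\,d\tau,
\]
and sends $\ep\to0^+$ using dominated/monotone convergence, concluding $\|N_-(t)\|_{L^1}=0$. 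You instead prove positivity at the level of the linear Picard iterates by testing with $(N^{n+1})_-$, using $\nabla_L\cdot U^n=0$ to kill the transport and $\|\Delta_L C^n\|_{L^\infty}\lesssim\|N^n\|_{H^s}+\|C^n\|_{H^s}$ (since $\Delta_L C^n=C^n-N^n$ and $s\geq 3>d/2$) to obtain a Gronwall inequality for $\|(N^{n+1})_-\|_{L^2}^2$, then pass to the limit. Your version is more elementary and fits naturally into the iteration scheme, but it relies on $L^\infty$ control of $\Delta_L C$ and on the smoothness of the iterates to justify the testing; the paper's $j_\ep$-argument is $L^1$-based, needs only $L^2$ information on $N$ and $C$, and applies directly to the nonlinear solution. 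Both are valid at the stated regularity level.
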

\myr{Extra care is needed here because we are treating the Couette flow. We comes to the $z,y$-plane. The local a-priori energy estimate yields the $H^s_{z,y}$ bound. We should also be able to propagate the $N|y|^2$ bound because the $y\pa_x$-flow is horizontal. We use the $H^s_{z,y}$-bound (and some bounded domain approximation) to gain the compactness from Aubin-Lion. And we check that the limit is a solution. Also note that the flow is autonomous, so for any reference time $s$, we can switch it back to the \eqref{pePKS-NS}, and shift $s$ to zero and then do the  $z,y$ coordinate. Even though the existence time might now depend on the value $s$ (because the shift back will change $\pa_y$-estimate on $(z,y)$ to $\pa_y+t\pa_x$ estimate on $(x,y)$), we just need the fact that the norm can not jump in time to close the bootstrap.}
To prove Theorem \ref{thm:main}, we use a bootstrap argument. Assume that $[0,T_\star]$ is the largest time interval  on which the following \textbf{Hypotheses} hold:
\begin{subequations}\label{Hypotheses}
\begin{align}\label{Hypothesis_cell_density_enhanced_dissipation} 
\| A_\kappa N_{\neq}(t)\|_{L^2}^2+\int_0^{t}\norm{\sqrt{\frac{-\pa_\tau M_\kappa}{ M_\kappa}}A_ \kappa  N_{\neq}}_{L^2}^2+\kappa \|A_\kappa\sqrt{-\de_L} N_{\neq}\|_{L^2}^2d\tau  \leq& 2\mathcal{B}_{N_\nq}^2;\\ 
\label{Hypothesis_N_0}\|N_0(t)\|_{H^s}^2\leq &2\mathcal{B}_{N_0}^2;\\
\label{Hypothesis_omega_enhanced_dissipation} 
\| A_\nu \om_{\neq}(t)\|_{L^2}^2+\int_0^{t}\left\|\sqrt{\frac{-\pa_\tau M_\nu}{ M_\nu}}A_\nu  \om_{\neq}\right\|_{L^2}^2+\nu \|A_\nu\sqrt{-\de_L} \Omega_{\neq}\|_{L^2}^2d\tau \leq&  2\mathcal{B}_{\om_\nq}^2\ep^2\nu;\\
\label{Hypothesis_Omega_0}
\|\Omega_0(t)\|_{H^s }^2+\nu\int_0^t\|\pa_y\Omega_0\|_{H^s}^2d\tau\leq 2\mathcal{B}_{\om_0}^2 \ep^2\nu ,&\quad \forall t\in[0,T_\star].
\end{align} 
\end{subequations}
Without loss of generality, we set $\mathcal{B}_{N_\nq}, \mathcal{B}_{N_0}, \mathcal{B}_{\om_\nq}, \mathcal{B}_{\om_0}\geq 1.$ {Moreover, they only depend on the initial data $\|N_{\text{in}}\|_{L^1\cap H^s}$ and the regularity level $s$. }

\begin{pro}\label{pro_bootstrap}\myb{
Consider the system \eqref{PKS_NS_zy_coordinate} subject to  initial condition ${0\leq}N_\mathrm{in}\in L^1\cap H^s(\Torus\times\rr)$, ${N_\mathrm{in}|y|^2\in L^1(\Torus\times\rr)}$, $\Omega_{\mathrm{in}}\in H^s(\Torus\times\rr), \, { 5\leq s\in \mathbb{N}}$. Assume that $0<\kappa\leq \nu\leq 1$. Let $[0, T_\star]$ be the largest interval on which the hypotheses \eqref{Hypotheses} hold.   
There exists a threshold $\ep_0=\ep_0(\|N_{\mathrm{in}}\|_{L^1\cap H^s})$ such that if the condition \eqref{initial_data} is satisfied, the following stronger estimates hold}
\begin{subequations}\label{Conclusions}
\begin{align}
\label{Conclusion_cell_density_enhanced_dissipation} 
\| A_\kappa N_{\neq}(t)\|_{L^2}^2+\int_0^{t}\left\|\sqrt{\frac{-\pa_\tau M_\kappa }{M_\kappa}}A_\kappa N_{\neq}\right\|_{L^2}^2+\kappa \|A_\kappa\sqrt{-\de_L }N_{\neq}\|_{L^2}^2d\tau \leq& \mathcal{B}_{N_\nq}^2;\\
\label{Conclusion_N_0}
\|N_0(t)\|_{H^s}^2\leq &\mathcal{B}_{N_0}^2;\\
\label{Conclusion_omega_enhanced_dissipation} 
\|A_\nu \om_{\neq}(t)\|_{L^2}^2+\int_0^{t}\norm{\sqrt{\frac{-\pa_\tau M_\nu} {M_\nu}} A_\nu\om_{\neq}}_{L^2}^2+\nu \|A_\nu\sqrt{-\de_L} \Omega_{\neq}\|_{L^2}^2d\tau  
\leq &\mathcal{B}_{\om_\nq}^2\ep^2\nu;\\ 
\label{Conclusion_Omega_0}
\|\Omega_0(t)\|_{H^s}^2+\nu\int_0^t\|\pa_y\Omega_0\|_{H^s}^2d\tau\leq&  \mathcal{B}_{\om_0}^2\ep^2\nu,\quad \forall t\in[0,T_\star].
\end{align}
\end{subequations}
Here the bounds $\mathcal{B}_{N_\nq},\, \mathcal{B}_{N_0},\, \mathcal{B}_{\om_\nq}$ and $ \mathcal{B}_{\om_0}$ depend only on the initial data $\|N_{\mathrm{in}}\|_{L^1\cap H^s}$. 
\end{pro}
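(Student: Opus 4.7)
The strategy is a coupled bootstrap in which each of the four hypotheses \eqref{Hypothesis_cell_density_enhanced_dissipation}--\eqref{Hypothesis_Omega_0} is improved by a weighted energy estimate run on the corresponding evolution equation in \eqref{PKSNS_New_Coordinate}. I plan to differentiate $\|A_\iota f\|_{L^2}^2$ (or $\|f\|_{H^s}^2$ for zero modes) in time; since the multipliers $M_\kappa,M_\nu$ are monotonically decreasing, they produce a non-positive contribution of the form $-\|\sqrt{-\pa_\tau M_\iota/M_\iota}\,A_\iota f_\nq\|_{L^2}^2$, a CK-type dissipation that is the workhorse of the argument. Combined with the parabolic term $-\iota\|A_\iota \sqrt{-\de_L} f_\nq\|_{L^2}^2$ and the multiplier property $\delta \kappa^{1/3}|k|^{2/3}\lesssim -\pa_\tau M_\iota/M_\iota+\iota(|k|^2+|\eta|^2)$ (collected in the appendix), the CK piece absorbs the exponential growth from the prefactor $e^{\delta \kappa^{1/3}|\pa_x|^{2/3}t}$ and, crucially, controls the transport nonlinearities at the resonant time $t\approx \eta/k$.

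For the nonzero-mode cell density estimate \eqref{Conclusion_cell_density_enhanced_dissipation}, I split the transport $(U\cdot\na_L N)_\nq = U_0 \pa_z N_\nq+(U_\nq\cdot \na_L N_0)_\nq+(U_\nq\cdot\na_L N_\nq)_\nq$. The first piece uses that on nonzero modes $\pa_z$ gives an extra power of $|k|\geq 1$, against which the $H^s$ smallness of $U_0$ (controlled by $\|\Omega_0\|_{H^s}\leq \sqrt{2}\mathcal B_{\om_0}\ep\nu^{1/2}$) is pitted. The last two pieces use bilinear $H^s$ product estimates (valid for $s\geq 5$), together with the fact that $A_\nu$ and $A_\kappa$ share the same exponential factor $e^{\delta \kappa^{1/3}|k|^{2/3}t}$, so a cross bound of the form $\|A_\kappa(fg)\|_{L^2}\lesssim \|A_\nu f\|_{L^2}\|A_\kappa g\|_{L^2}$ (modulo bounded ratios of $\mathcal W, W_\kappa, W_\nu$) pairs the $U$-hypotheses with the $N$-hypotheses, yielding a closure factor of $\ep\nu^{1/2}$. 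The chemotactic term $\kappa \na_L\cdot(N\na_L C)_\nq$ is split via $N=N_0+N_\nq$, $C=C_0+C_\nq$; since $(1-\de_L)^{-1}$ gains two derivatives, each bilinear piece is tame in $H^s$, and the explicit $\kappa=\ep\nu$ prefactor provides the smallness required to absorb into the dissipation.

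For $N_0$ \eqref{Conclusion_N_0}, conservation of $\|N_0\|_{L^1}=\|n_{\mathrm{in}}\|_{L^1}=M$ is immediate. The transport $(U\cdot\na_L N)_0=(U_\nq\cdot\na_L N_\nq)_0$ involves only nonzero modes, to which the enhanced-dissipation hypotheses supply an $\ep^2\nu$-smallness. The autonomous piece $\pa_y(N_0 \pa_y C_0)$ of the chemotactic term is handled using positivity of $N_0$, the heat dissipation $\kappa\|\pa_y N_0\|_{H^s}^2$, and standard Gagliardo--Nirenberg--Sobolev bounds with $M$ as a control parameter, while the cross pieces $\pa_y(N_\nq \pa_y^t C_\nq)_0$ are closed by the $\nq$-hypotheses. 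The fluid estimates \eqref{Conclusion_omega_enhanced_dissipation}--\eqref{Conclusion_Omega_0} are analogous to their cell-density counterparts; the only structural novelty is the forcing $\kappa\na_L^\perp\cdot (N\na_L C)$, which carries the explicit $\kappa=\ep\nu$ and thus contributes at most $\ep\nu$ times a quantity bounded by $\mathcal B_{N_\nq}\mathcal B_{N_0}$, consistent with the target $\ep^2\nu$ after one extra $\ep$ is extracted from the initial-data smallness.

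The \textbf{main obstacle} is the chemotactic cross term $\kappa \pa_y^t (N_0 \pa_y^t C_\nq)$ (and its sibling $\kappa \pa_y^t(N_\nq \pa_y C_0)$) in \eqref{N_nq}: the factor $N_0$ is only bounded by $\mathcal B_{N_0}$, not $\ep$-small, so closure must come entirely from the $\kappa=\ep\nu$ prefactor and the CK dissipation, all while accommodating the linearly growing symbol of $\pa_y^t=\pa_y-t\pa_z$. I plan to handle this by frequency-localizing to the enhanced-dissipation window $|k|\leq \kappa^{-1/2}$, in which $|\pa_y^t|\lesssim (1+|t-\eta/k|)|k,\eta|$ and the factor $(1+|t-\eta/k|)$ can be cashed in against $\sqrt{-\pa_\tau M_\kappa/M_\kappa}$, whose size is comparable to $\kappa^{1/6}|k|^{1/3}(1+\kappa^{2/3}|k|^{4/3}(t-\eta/k)^2)^{-1/2}$; outside this window, the full parabolic dissipation $\kappa\|A_\kappa\sqrt{-\de_L}N_\nq\|_{L^2}^2$ already dominates. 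Assembling these four estimates and taking $\ep_0$ sufficiently small in terms of $(\mathcal B_{N_\nq},\mathcal B_{N_0},\mathcal B_{\om_\nq},\mathcal B_{\om_0})$ then improves the factor of $2$ to $1$ in each hypothesis, completing the bootstrap.
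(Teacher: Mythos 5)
Your overall framework---a weighted energy estimate with $A_\iota$ multipliers, the Cauchy--Kovalevskaya dissipation coming from the monotone decrease of $M_\kappa,M_\nu$, the elliptic gain from $(1-\de_L)^{-1}$, and the comparability $A_\kappa\approx A_\nu$ thanks to the shared exponential factor---matches the paper's. The $N_0$ estimate via $L^1$-conservation and GNS-type interpolation is also the right plan. But two points in your treatment of $N_\nq$ and $\Omega_\nq$ do not close.

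\textbf{The zero-mode transport term cannot be handled by smallness alone.} You propose to control $U_0^{(1)}\pa_z N_\nq$ by ``pitting'' the $H^s$ smallness of $U_0$ (of order $\ep\nu^{1/2}$) against the extra derivative from $\pa_z$. That extra derivative puts $(s+1)$ derivatives on $N_\nq$, while your energy controls $s$; the only way to pay for it is the parabolic dissipation $\kappa\|A_\kappa\sqrt{-\de_L}N_\nq\|_{L_t^2L^2}^2$, which costs $\kappa^{-1/2}$. Tracking the exponents with $\kappa=\ep\nu$ gives a coefficient of order
\begin{align}
\|U_0^{(1)}\|_{L_t^\infty H^s}\cdot\kappa^{-1/2}\cdot\kappa^{-1/6}\approx \ep\nu^{1/2}(\ep\nu)^{-2/3}=\ep^{1/3}\nu^{-1/6},
\end{align}
which blows up as $\nu\to0$. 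The paper instead exploits the antisymmetry $\int U_0^{(1)}\,\pa_z(A_\kappa N_\nq)^2\,dV=0$, turns the term into a commutator $\left(M_\kappa(k,\eta)\lan k,\eta\ran^s-M_\kappa(k,\xi)\lan k,\xi\ran^s\right)$, and then uses the commutator estimate \eqref{commutator_estimate} (and $|\pa_\eta M_\iota|\lesssim1/|k|$, \eqref{M_property_common_pa_eta}) to gain a full derivative, landing at $\ep^{2/3}\nu^{1/6}$, which is small. This cancellation/commutator step is essential here and again for $T_{\Omega_\nq;12}$ in the vorticity estimate; your proposal has no counterpart to it.

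\textbf{You misidentify the genuinely dangerous term and rely on the wrong piece of the CK multiplier.} The chemotactic cross term $\kappa\pa_y^t(N_0\pa_y^t C_\nq)$ you flag as the main obstacle is in fact tame: after integration by parts, one outer $\pa_y^t$ lands on the test function and is absorbed by $\kappa\|A_\kappa\sqrt{-\de_L}N_\nq\|^2$, while $\pa_y^t C_\nq=\pa_y^t(1-\de_L)^{-1}N_\nq$ has a \emph{bounded} symbol (so no growth to ``cash in''); the remaining factor $\|A_\kappa\na_L(1-\de_L)^{-1}N_\nq\|$ is precisely the elliptic estimate \eqref{Green_M_t_est}, which rests on the $\mathcal W$-part of $M_\kappa$ via $-\pa_t\mathcal W=\frac{|k|^2}{|k|^2+|\eta-kt|^2}$. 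Your formula $\sqrt{-\pa_\tau M_\kappa/M_\kappa}\approx \kappa^{1/6}|k|^{1/3}(1+\kappa^{2/3}|k|^{4/3}(t-\eta/k)^2)^{-1/2}$ only captures the $W_\kappa$-contribution, which decays too fast at large $|t-\eta/k|$ to cancel the linear $\pa_y^t$ growth you are worried about; the $\mathcal W$-contribution is what provides the cancellation and is valid for all $k\neq0$, making the frequency-splitting at $|k|\sim\kappa^{-1/2}$ unnecessary. The term that has no $\kappa$ prefactor and truly requires a new idea is the transport $U_0^{(1)}\pa_z N_\nq$ discussed above.
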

\begin{rmk}
The explicit choice of the constants $\mathcal{B}_{N_\nq},\, \mathcal{B}_{N_0},\, \mathcal{B}_{\om_\nq}$ and $ \mathcal{B}_{\om_0}$ are listed in \eqref{Choice_of_bootstrap_const}. 
The choice of the threshold $\ep_0$ can be found in \eqref{Choice_of_ep}. 
\end{rmk}
\myb{
Now we can conclude the proof of Theorem \ref{thm:main}.
\begin{proof}[Proof of Theorem \ref{thm:main}]
Combining Theorem \ref{thm:local_existence} and Proposition \ref{pro_bootstrap}, we see that $[0,T_\star]$ is both open and closed on $[0,\infty)$. Hence $T_\star=\infty$ and the solutions with estimates \eqref{Conclusions} exist for all time. The enhanced dissipation estimate \eqref{ED} is a consequence of the bounds \eqref{Conclusion_cell_density_enhanced_dissipation}, \eqref{Conclusion_omega_enhanced_dissipation}, 
\begin{align*} &\left\|e^{\delta \kappa^{1/3}|\pa_x|^{2/3}t}\left(n-\frac{1}{|\Torus|}\int_\Torus n dx\right)\right\|_{L_{x,y}^2} +\left\|e^{\delta \kappa^{1/3}|\pa_x|^{2/3}t}\left(\omega-\frac{1}{|\Torus|}\int_\Torus \omega dx\right)\right\|_{L_{x,y}^2}\\
& \quad\quad\leq
C\|A_\kappa N_\nq(t)\|_{L_{z,y}^2}+C\|A_\nu \Omega_\nq(t)\|_{L_{z,y}^2}  \leq C\mathcal{B}_{N_\nq}^2(\|N_{\mathrm{in}}\|_{L^1\cap H^s})+C\mathcal{B}_{\Omega_\nq}^2(\|N_{\mathrm{in}}\|_{L^1\cap H^s}),\quad \forall t\in[0,\infty).
\end{align*} 
This concludes the proof.
\end{proof}
}
\subsection{Notations}

Throughout the paper, the constant $C$, \myb{which can only depend on the regularity level $s$,} will change from line to line. Constants with subscript, i.e., $ \mathcal{B}_{N_\nq}$, will be fixed. 
For $A,B\geq 0$, we use the notation $A\approx B$ to highlight that there exists a strictly positive constant $C$ such that $B/C\leq A\leq CB$. We also use the notation $A\lesssim B$ to represent that there exists a constant $C>0$ such that $A\leq CB$. 

Recall the classical $L^p$ norms and Sobolev $H^s,\,s\in \mathbb{N}_+$ norms:
\begin{align}
\|f\|_{L^p}=&\|f\|_p=\left(\int |f|^p dV\right)^{1/p};\quad \|f\|_{L_t^q([0,T]; L^p )}=\left(\int_0^T\|f(t )\|_{L ^p}^qdt\right)^{1/q};\\
\|f\|_{H ^s}=&\left(\sum_{i+j \leq s}\|\pa_z^i\pa_y^j  f\|_{L^2}^2\right)^{1/2};\quad
\|f\|_{\dot H^s}=\left(\sum_{i+j =s}\|\pa_z^i\pa_y^j  f\|_{L ^2}^2\right)^{1/2}. 
\end{align} 
Here $dV=dzdy=dxdy$ is the volume element. If $p$ or $q$ is $\infty$, then we use the classical $L^\infty$-norm. 

We use $\widehat{f}$ to denote Fourier transform of function $f$ in the  $(z,y)$ variables. 
The frequency variables corresponding $z$ and $y$ are denoted by $\{k,\ell\}$ and $\{\eta,\xi\}$, respectively.  
\ifx
The Fourier transform and its inverse has the following form
\begin{equation*}
\wh{f}(k,\eta) := \frac{1}{(2\pi)^2}\int_0^{2\pi}\int_{\rr} e^{-ik z-i\eta y} f(z,y) dzdy , \quad \check{g}(z,y) = \sum_{\al=-\infty}^\infty \int g(k,\eta) e^{ik z+iy\eta}d\eta.
\end{equation*}
\fi
The Fourier multipliers are defined as follows:
\begin{align}
\widehat{(\mathfrak M(\na ) f)}(k,\eta)=\mathfrak M(ik,i\eta)\wh f(k,\eta).
\end{align}
Given a set $\mathcal{S}\in \mathbb{Z}$, we define the projection $\mathbb{P}_{k\in \mathcal{S}}$
\begin{align}
\widehat{\mathbb{P}_{k\in \mathcal{S}} f}=\mathbf{1}_{k\in\mathcal{S}} \wh f(k,\eta).
\end{align} 
We apply the following notations
\begin{align}
\lan k\ran= \sqrt{1+k^2},\,\quad \lan k,\eta\ran =(1+k^2+\eta^2)^{1/2},\quad |k,\eta|=|k|+|\eta|.
\end{align} We recall from classical literature that $\|f\|_{H^s}\approx \|\lan \pa_z,\pa_y \ran^s f\|_{L^2}.$
\section{Vorticity Estimates}\label{Sect:Fluid}
In this section, we derive the estimates associated with the fluid motion. We organize the proof into two subsections. In Subsection \ref{Sect:Fluid_1}, we prove the enhanced dissipation estimate of the vorticity remainder $\Omega_\nq$ \eqref{Conclusion_omega_enhanced_dissipation}. In Subsection \ref{Sect:Fluid_2}, we prove the $z$-average ($\Omega_0$) estimate \eqref{Conclusion_Omega_0}.
\subsection{Remainder Estimates}\label{Sect:Fluid_1}
In this section, we consider the remainder of the vorticity, which solves the equation \eqref{Omega_nq}. We will prove the conclusion \eqref{Conclusion_omega_enhanced_dissipation}.  

Application of the energy estimate yields that
\begin{align}
\frac{d}{dt}\frac{1}{2}\|A_\nu\Omega_{\neq}\|_{L^2}^2=&\delta \kappa^{1/3}\sum_{k\neq 0}|k|^{2/3}\int |A_\nu(t,k,\eta) \wh \Omega(t,k,\eta)|^2d\eta -\sum_{k\neq0}\int \frac{-\pa_t M_\nu}{ M_\nu}   |A_\nu(t,k,\eta)\wh \om(t,k,\eta)|^2 d\eta\\
&-\nu \sum_{k\neq 0}\int |A_\nu(t,k,\eta)(\sqrt{|k|^2+|\eta-kt|^2})\wh \Omega (t,k,\eta)|^2d\eta\\
&-\int A_\nu(U\cdot \na _L \Omega )_\nq \ A_\nu\Omega_{\neq} dV+\kappa \int  A_\nu(\na_L^\perp\cdot (N \na_L C))_{\neq} \ A_\nu\Omega_{\neq}dV.
\end{align}
The relation $\kappa\leq \nu$, together with the property of the multiplier $M_\nu$ \eqref{M_property_ED} yields that if $\delta \leq \frac{1}{16\pi^2}$, then the time evolution of the norm is bounded as follows:
\begin{align}
\frac{d}{dt}\frac{1}{2}\|A_\nu\Omega_{\neq}\|_{L^2}^2\leq &-\frac{1}{2}\left\|\sqrt{\frac{-\pa_t M_\nu}{M_\nu}}A_\nu \om_\nq\right\|_{L^2}^2-\frac{1}{2}\nu\|A_\nu\sqrt{-\de_L}\om_\nq\|_{L^2}^2\\
&-\int A_\nu(U\cdot \na _L \Omega )_\nq  \ A_\nu\Omega_{\neq} dV+\kappa \int A_\nu(\na_L^\perp\cdot (N \na_L C))_{\neq}  \ A_\nu\Omega_{\neq}dV.
\end{align} 
Integration in time yields that
\begin{align}
\|A_\nu\Omega_\nq(t)\|_{L^2}^2&+\int_0^t \left\|\sqrt{\frac{-\pa_\tau M_\nu}{M_\nu}}A_\nu \Omega_\nq\right\|_{L^2}^2d\tau+\nu\int_0^t\|A_\nu \sqrt{-\de_L}\om_\nq\|_{L^2}^2d\tau\\
\leq& \|A_\nu\Omega_{\text{in};\nq}\|_{L^2}^2+2\bigg| \int _0^t\int A_\nu(U\cdot \na _L \Omega )_\nq \  A_\nu\Omega_{\neq} dVd\tau\bigg|+2\bigg|\kappa \int_0^t\int A_\nu(\na_L^\perp\cdot (N \na_L C))_{\neq}\ A_\nu\Omega_{\neq}  dV d\tau\bigg|\\
=:&\|A_\nu\Omega_{\text{in};\nq}\|_{L^2}^2+T_{\Omega_\nq;1}+T_{\Omega_\nq;2}.\label{T12}
\end{align}
The estimates of the terms $T_{\Omega_\nq;1}$ and $T_{\Omega_\nq;2}$ are summarized in Lemma \ref{lem:T_1} and Lemma \ref{lem:T_2}.
\begin{lem}\label{lem:T_1}
Assume all the conditions in Proposition \ref{pro_bootstrap}. The  $T_{\Omega_\nq;1}$-term in \eqref{T12} is bounded as follows
\begin{align}\label{T_1_bound}
T_{\Omega_\nq;1}\leq C(\mathcal{B}_{\om_\nq}+\mathcal{B}_{\om_0} )\mathcal{B}_{\om_\nq}^2\ep^3\nu .
\end{align}
Here the constant $C$ depends only on the regularity level $s$. 
\end{lem}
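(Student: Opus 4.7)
The plan is to split the nonlinearity along the $z$-average/remainder decomposition and estimate each piece separately using the Biot--Savart law, the bootstrap hypotheses \eqref{Hypotheses}, and the multiplier properties collected in the appendix. Since $U_0^{(2)}=(\pa_z(-\de_L)^{-1}\Omega)_0=0$ by incompressibility of the zero mode and $\na_L\Omega_0=(0,\pa_y\Omega_0)$, the transport term decomposes cleanly as
\begin{align*}
(U\cdot \na_L \Omega)_\nq = U_0^{(1)}\pa_z \Omega_\nq \ +\ U_\nq^{(2)}\pa_y \Omega_0 \ +\ (U_\nq \cdot \na_L \Omega_\nq)_\nq,
\end{align*}
which yields three subterms $T^{(a)}, T^{(b)}, T^{(c)}$. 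I aim to bound each of them by $C\mathcal{B}_{\om_\nq}^2(\mathcal{B}_{\om_0} + \mathcal{B}_{\om_\nq})\ep^3\nu$ and sum.

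For $T^{(a)}$, the transport is along the $z$-independent vector field $U_0^{(1)}(y)$, so a direct integration by parts eliminates the leading contribution and leaves only a commutator $[A_\nu, U_0^{(1)}]\pa_z\Omega_\nq$ paired against $A_\nu\Omega_\nq$. I will estimate this commutator using Fourier-side bounds on $\pa_\eta M_\nu$ (which are sharp because $M_\nu$ couples to $\eta$ only through $t-\eta/k$) to control it by $\|\Omega_0\|_{H^s}\|A_\nu\Omega_\nq\|_{L^2}^2$; time integration together with \eqref{Hypothesis_Omega_0} and \eqref{Hypothesis_omega_enhanced_dissipation} produces the $\mathcal{B}_{\om_0}\mathcal{B}_{\om_\nq}^2\ep^3\nu$ contribution (an additional $\sqrt{\nu}$ from $\|\Omega_0\|_{H^s} \lesssim \mathcal{B}_{\om_0}\ep\sqrt{\nu}$ is slack).

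For $T^{(b)}$, Biot--Savart gives $U_\nq^{(2)}=\pa_z(-\de_L)^{-1}\Omega_\nq$, whose Fourier symbol $ik/(k^2+(\eta-kt)^2)$ peaks at the critical time $t\approx\eta/k$, exactly where the ghost multiplier $\sqrt{-\pa_t M_\nu/M_\nu}$ concentrates. I will therefore split the frequency plane into a resonant and a non-resonant piece: on the non-resonant part the inverse Laplacian supplies enough decay to be absorbed into the $\sqrt{\nu}\|A_\nu\sqrt{-\de_L}\Omega_\nq\|_{L^2}$ dissipation, while on the resonant part the ghost weight furnishes the missing smallness. Pairing with $\|\pa_y\Omega_0\|_{L^\infty_{t,y}}\lesssim\mathcal{B}_{\om_0}\ep\sqrt{\nu}$ (Sobolev embedding needs $s\geq 3$) and Cauchy--Schwarz in time closes this subterm.

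Finally, for $T^{(c)}$, both factors are $\nq$-modes equipped with the enhanced-dissipation bound \eqref{Hypothesis_omega_enhanced_dissipation}. I will rewrite the product in divergence form using $\na_L\cdot U_\nq=0$, then distribute $A_\nu$ via a high-low/low-high paraproduct: in the high-low piece the derivative lands on the high-frequency vorticity and is absorbed into $\sqrt{\nu}\|A_\nu\sqrt{-\de_L}\Omega_\nq\|_{L^2}$; in the low-high piece Biot--Savart and Sobolev embedding ($s\geq 5$) dominate $\|A_\nu U_\nq\|_{L^\infty}$ by $\|A_\nu\Omega_\nq\|_{L^2}$. Cauchy--Schwarz in $\tau$ then combines the $L^\infty_tL^2$ norm with the $L^2_t$ dissipation bound to yield $\mathcal{B}_{\om_\nq}^3\ep^3\nu$. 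The main obstacle will be the careful distribution of the anisotropic multiplier $A_\nu$: its exponential factor $e^{\delta\kappa^{1/3}|k|^{2/3}t}$ is only approximately subadditive in $k$ and the angular factor $M_\nu$ has nontrivial $(k,\eta)$-dependence, so every frequency split has to be matched against either the ghost multiplier or the $\sqrt{\nu}|\na_L|$-dissipation to avoid losing a power of $\nu$.
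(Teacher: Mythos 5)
Your proposal uses the same core ingredients as the paper -- the Biot--Savart law, the vanishing of $U_0^{(2)}$, the commutator with the anisotropic multiplier for the zero-mode transport, the elliptic gain of the ghost weight on $\na_L\de_L^{-1}$, and the bootstrap hypotheses paired with Cauchy--Schwarz in time -- but organizes them through a finer three-way decomposition than the paper, which only splits $T_{\Omega_\nq;1}$ into two pieces: $T_{\Omega_\nq;11}$ (keeping $U_\nq\cdot\na_L\Omega$ intact, so your $T^{(b)}$ and $T^{(c)}$ are handled together) and $T_{\Omega_\nq;12}$ (your $T^{(a)}$). The paper disposes of $T_{\Omega_\nq;11}$ in one stroke by applying the algebra property \eqref{A_product_rule_Hs} directly to $\|A_\nu(\na_L^\perp\de_L^{-1}\Omega_\nq\cdot\na_L\Omega)\|_{L^2}$, then invoking the elliptic estimate \eqref{Green_M_t_est} for the factor $\|A_\nu\na_L^\perp\de_L^{-1}\Omega_\nq\|_{L^2}$; your resonant/non-resonant frequency split for $T^{(b)}$ is essentially a re-derivation of \eqref{Green_M_t_est}, and your paraproduct for $T^{(c)}$ re-derives the algebra property, so you are recomputing content that the paper packages as standing lemmas. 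Your $T^{(a)}$ treatment matches the paper's commutator argument for $T_{\Omega_\nq;12}$; note that after the commutator you must convert $\int_0^t\|A_\nu\Omega_\nq\|_{L^2}^2\,d\tau$ to the dissipation budget via the enhanced-dissipation inequality \eqref{ED_M_Lt2Lzy2} (gaining the $\nu^{-1/3}$), since the pointwise-in-time bound on $\|A_\nu\Omega_\nq\|_{L^2}$ alone would give a time-growing estimate. One small overclaim in your $T^{(b)}$: you propose to pair with $\|\pa_y\Omega_0\|_{L^\infty_{t,y}}$, but when distributing the anisotropic weight $A_\nu$ across the product $U_\nq^{(2)}\pa_y\Omega_0$, the piece where $\lan\eta-\xi\ran^s$ falls on the zero mode forces a $\|\pa_y\Omega_0\|_{H^s}$ bound rather than $L^\infty$; both are furnished by \eqref{Hypothesis_Omega_0} with the same slack, so this does not derail the argument, but the $L^\infty$ statement as written is not what the bookkeeping uses.
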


\begin{lem}\label{lem:T_2}
Assume all the conditions in Proposition \ref{pro_bootstrap}. There exists a constant $C=C(s)$ such that the following estimate of the $T_{\om_\nq;2}$-term holds 
\begin{align}\label{T_2_bound}
T_{\om_\nq;2}\leq \frac
{1}{4}\nu\int_0^t\|A_\nu \sqrt{-\de_L} \om_\nq\|_{L^2}^2d\tau+\frac{1}{4}\int_0^t\left\|A_\nu\sqrt{\frac{-\pa_\tau M_\nu}{M_\nu}}\om_\nq\right\|_{L^2}^2d\tau+ 	C (\mathcal{B}_{N_\nq}^4+\mathcal{B}_{N_0}^4 )\ep^2 \nu.
\end{align} 
\end{lem}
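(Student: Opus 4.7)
The strategy is to decompose the nonlinear forcing by zero/nonzero modes, exploit divergence structure to integrate by parts onto $A_\nu^2\Omega_\nq$, and absorb the quadratic-in-$\Omega_\nq$ pieces into the dissipative and CK terms on the right-hand side of \eqref{T_2_bound}. Writing $N=N_0+N_\nq$ and $C=C_0+C_\nq$ with $C_0=(1-\pa_{yy})^{-1}N_0$, $C_\nq=(1-\de_L)^{-1}N_\nq$, and using $\pa_z N_0=\pa_z C_0=0$ so that the $0$–$0$ interaction vanishes identically, one obtains
\[
(\na_L^\perp\cdot(N\na_L C))_\nq \;=\; -\pa_z\bigl((\pa_y N_0)C_\nq\bigr)+\pa_z\bigl((\pa_y C_0)N_\nq\bigr)+\bigl(\na_L^\perp\cdot(N_\nq\na_L C_\nq)\bigr)_\nq,
\]
a sum of three pieces each carrying an explicit $\na_L$-divergence.

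Integrating the divergence by parts in the space integral of $T_{\Omega_\nq;2}$, then applying Cauchy–Schwarz and Young's inequality with the weight tuned so that the $\Omega_\nq$-quadratic piece is absorbed either into $\tfrac14\nu\|A_\nu\sqrt{-\de_L}\Omega_\nq\|_{L^2}^2$ (for frequencies where dissipation is the right rate) or into $\tfrac14\|A_\nu\sqrt{-\pa_\tau M_\nu/M_\nu}\Omega_\nq\|_{L^2}^2$ (for frequencies where enhanced dissipation dominates), and using $\kappa^2/\nu=\ep^2\nu$, gives
\[
T_{\Omega_\nq;2}\le \tfrac14\nu\!\int_0^t\!\|A_\nu\sqrt{-\de_L}\Omega_\nq\|_{L^2}^2 d\tau+\tfrac14\!\int_0^t\!\bigl\|A_\nu\sqrt{\tfrac{-\pa_\tau M_\nu}{M_\nu}}\Omega_\nq\bigr\|_{L^2}^2 d\tau+C\ep^2\nu\!\int_0^t\!\bigl(\|A_\nu G_a\|_{L^2}^2+\|A_\nu G_b\|_{L^2}^2+\|A_\nu G_c\|_{L^2}^2\bigr)d\tau,
\]
with $G_a=(\pa_y N_0)C_\nq$, $G_b=(\pa_y C_0)N_\nq$ and $G_c=N_\nq\na_L C_\nq$. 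It remains to show that each $\int_0^t\|A_\nu G_j\|_{L^2}^2 d\tau$ is bounded by $C(\mathcal{B}_{N_0}^4+\mathcal{B}_{N_\nq}^4)$.

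For these time integrals I would invoke four tools: (i) the comparability $A_\nu/A_\kappa\sim 1$ (both $W_\nu,W_\kappa\in[\pi/2,3\pi/2]$ and the exponential-Sobolev factor $e^{\delta\kappa^{1/3}|k|^{2/3}t}\langle k,\eta\rangle^s$ is common), (ii) a Kato–Ponce type product estimate for the multiplier $A_\nu$ from the appendix, (iii) Sobolev embedding $H^{s-1}\hookrightarrow L^\infty$ for $s\ge 3$ yielding $\|\pa_y N_0\|_\infty+\|\pa_y C_0\|_\infty\le C\mathcal{B}_{N_0}$ together with the Fourier-series bound $\|C_\nq\|_\infty\le C\|N_\nq\|_{L^2}\le C\mathcal{B}_{N_\nq}$, and (iv) the pointwise symbol comparison
\[
\tfrac{1}{(1+k^2+(\eta-k\tau)^2)^2}\le \tfrac{1}{1+(\tau-\eta/k)^2}\le C\,\tfrac{-\pa_\tau\mathcal W}{\mathcal W}\le C\,\tfrac{-\pa_\tau M_\kappa}{M_\kappa},\qquad k\nq 0,
\]
obtained by writing $1+k^2+(\eta-k\tau)^2\ge k^2(1+(\tau-\eta/k)^2)$ for nonzero integer $k$. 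Comparison (iv) converts the elliptic smoothing of $C_\nq$ into the CK weight on $N_\nq$, and combined with bootstrap \eqref{Hypothesis_cell_density_enhanced_dissipation} yields $\int_0^t\|A_\nu C_\nq\|_{L^2}^2 d\tau\le C\mathcal{B}_{N_\nq}^2$. Together with (iii) this disposes of $G_a$ and $G_b$ at size $C\mathcal{B}_{N_0}^2\mathcal{B}_{N_\nq}^2$, absorbed by AM–GM into $C(\mathcal{B}_{N_0}^4+\mathcal{B}_{N_\nq}^4)$.

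The main obstacle is the remainder-remainder term $G_c$: both factors carry the $A_\nu$-weight and neither has a uniform-in-time $L^\infty$ control, so a naive product estimate is not time-integrable. I would handle $G_c$ by a high-low paraproduct splitting of $A_\nu(N_\nq\na_L C_\nq)$, placing the Sobolev weight of $A_\nu$ on whichever factor is controlled in $L^\infty_tL^2$ by the bootstrap, estimating the low-frequency factor in $L^\infty_{z,y}$ via Sobolev embedding (here using $s\ge 5$), and applying the symbol comparison (iv) a second time to exchange the $C_\nq$-smoothing for the CK weight on $N_\nq$ whose time integral is controlled by \eqref{Hypothesis_cell_density_enhanced_dissipation}. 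Tracking exponents so as to land exactly at $\ep^2\nu$ (and not at $\ep\nu$ or $\ep^3\nu$) requires spending both factors of $\kappa=\ep\nu$ in Young's inequality — one to pair with the $\nu$-weighted dissipation of $\Omega_\nq$, the other to buy time-integrability from the cell-density bootstrap.
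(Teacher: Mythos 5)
Your decomposition of $(\na_L^\perp\cdot(N\na_L C))_\nq$ into $-\pa_z((\pa_y N_0)C_\nq)+\pa_z((\pa_y C_0)N_\nq)+(\na_L^\perp\cdot(N_\nq\na_L C_\nq))_\nq$ is algebraically correct, and the overall framework (integrate the explicit divergence by parts onto $A_\nu\Omega_\nq$, Young against the dissipative and Cauchy--Kovalevskaya terms, note $\kappa^2/\nu=\ep^2\nu$) matches the paper. But the way you propose to close the two ``mixed'' pieces $G_a=(\pa_y N_0)C_\nq$ and $G_b=(\pa_y C_0)N_\nq$ has a genuine gap, and it comes precisely from the rewriting. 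By commuting the $\pa_z$ off of $N_0\na_L C_\nq$ you have moved one $y$-derivative onto the zero mode: estimating $\|A_\nu((\pa_y N_0)C_\nq)\|_{L^2}$ with the paper's product rule \eqref{A_product_rule_Hs} produces $\|A_\nu\pa_y N_0\|_{L^2}\sim\|N_0\|_{H^{s+1}}$, which the bootstrap \eqref{Hypothesis_N_0} does not control, and no Kato--Ponce splitting avoids this because the high-$N_0$ paraproduct still puts the full $\langle\pa_y\rangle^s$ weight on $\pa_y N_0$ (the elliptic smoothing of $C_\nq$ sits at the moving-frame frequency $\langle k,\xi-kt\rangle$ and cannot absorb a physical-frame $\langle\eta-\xi\rangle$ factor). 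The paper avoids this entirely by \emph{not} commuting: it groups $N_0\na_L C_\nq$ and $N_\nq\na_L C_\nq$ together as $N\na_L C_\nq$ in $T_{\Omega_\nq;22}$, integrates $\na_L^\perp$ by parts directly, and then the derivative stays on $C_\nq$ where the Green function estimate \eqref{Green_M_t_est} turns it into a CK weight on $N_\nq$ --- the zero mode $N_0$ only ever enters through $\|N_0\|_{H^s}$.

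For $G_b$ there is a second, more quantitative problem. Your plan bounds $\int_0^t\|A_\nu G_b\|^2 d\tau$ by $\|\pa_y C_0\|^2$-type norms times $\int_0^t\|A_\nu N_\nq\|^2 d\tau$, but the bootstrap only provides the CK-weighted integral $\int_0^t\|A_\kappa\sqrt{-\pa_\tau M_\kappa/M_\kappa}N_\nq\|^2\le\mathcal{B}_{N_\nq}^2$; converting this to $\int_0^t\|A_\kappa N_\nq\|^2$ via \eqref{ED_M_Lt2Lzy2} costs a factor $\kappa^{-1/3}$, giving $\ep^2\nu\kappa^{-1/3}$ instead of the required $\ep^2\nu$, and this ratio $\kappa^{-1/3}$ is unbounded. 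The paper's proof of $T_{\Omega_\nq;21}$ (which is exactly your $G_b$-piece, recorded as $\pa_z(N_\nq\pa_y C_0)$) sidesteps this by working on the Fourier side: it splits $|k|=\sqrt{|k|^2+|\eta-k\tau|^2}\cdot\sqrt{|k|^2/(|k|^2+|\eta-k\tau|^2)}$, sends the first factor to $\Omega_\nq$, bounds the second factor by $C\sqrt{-\pa_\tau M_\kappa}$ via \eqref{M_property_common_dot_M}, and then transfers the weight from frequency $(k,\eta)$ to $(k,\xi)$ with the commutator property \eqref{M_5}, so that the time integral is the CK quantity itself, with no $\kappa^{-1/3}$ loss. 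Finally, you single out $G_c=N_\nq\na_L C_\nq$ as ``the main obstacle,'' but in the paper it is in fact the most routine piece: the plain product rule $\|A_\kappa(N_\nq\na_L C_\nq)\|\le C\|A_\kappa N_\nq\|\,\|A_\kappa\na_L C_\nq\|$ with $\|A_\kappa N_\nq\|\in L^\infty_t$ from the bootstrap and $\|A_\kappa\na_L C_\nq\|\in L^2_t$ from \eqref{Green_M_t_est} closes at size $\ep^2\nu\,\mathcal{B}_{N_\nq}^4$ with no paraproduct machinery needed.
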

With these two lemmas, we are ready to prove the improved bound \eqref{Conclusion_omega_enhanced_dissipation}.
\begin{proof}[Proof of the conclusion \eqref{Conclusion_omega_enhanced_dissipation}]
First, we recall the initial condition \eqref{initial_data}, which ensures that $\|A_\nu \om_{\text{in};\nq}\|_{L_{z,y}^2}\leq C \|\om_{\text{in};\nq}\|_{H_{x,y}^s}\leq C\ep\nu^{1/2}$. Combining it with the relation \eqref{T12} and the estimates \eqref{T_1_bound}, \eqref{T_2_bound}, we obtain that
\begin{align}
&\|A_\nu \om_\nq(t)\|_{L^2}^2+\int_0^t\left\|\sqrt{\frac{-\pa_\tau M_\nu }{M_\nu}}\Omega_\nq(\tau)\right\|_{L^2}^2d\tau+\nu\int_0^t\|A_\nu \sqrt{-\de_L}\om_\nq(\tau)\|_{L^2}^2d\tau\\
&\leq C\|\Omega_{\text{in};\nq}\|_{H^s}^2+C\left(\ep(\mathcal{B}_{\om_\nq}+\mathcal{B}_{\om_0})\mathcal{B}_{\om_\nq}^2+\mathcal{B}_{N_\nq}^4+\mathcal{B}_{N_0}^4\right)\ep^2\nu\leq C\left(\ep(\mathcal{B}_{\om_\nq}+\mathcal{B}_{\om_0})\mathcal{B}_{\om_\nq}^2+1+\mathcal{B}_{N_\nq}^4+\mathcal{B}_{N_0}^4\right)\ep^2\nu .
\end{align}
Here the constant $C$ depends only on the regularity level $s$. 
As a result the following choices of constants yields  \eqref{Conclusion_omega_enhanced_dissipation}\begin{align}\label{Choice_of_Const_1} 
\frac{1}{2}\mathcal{B}_{\om_\nq}^2\geq C(1+\mathcal{B}_{N_\nq}^4+\mathcal{B}_{N_0}^4), \quad \ep\leq \frac{1}{2C(\mathcal{B}_{\om_\nq}+\mathcal{B}_{\om_0})}. 
\end{align}
This concludes the proof.
\end{proof}

In the remaining part of this subsection, we prove Lemma \ref{lem:T_1} and Lemma \ref{lem:T_2}. 
\begin{proof}[Proof of Lemma \ref{lem:T_1}]The estimate of the $T_{\Omega_\nq;1}$-term is in the same vein as the one in \cite{BVW16}. We carry out the details for the sake of completeness. We recall that the Biot-Savart law yields that $U_0^{(2)}=\pa_z\de_y^{-1}\Omega_0=0$. Hence we can  expand the $T_{\Omega_\nq;1}$ term as follows:
\begin{align}
T_{\Omega_\nq;1}\leq&2\bigg|\int _0^t\int A_\nu(U_\nq\cdot \na _L \Omega )\  A_\nu\Omega_{\neq} dVd\tau\bigg|+2\bigg|\int_0^t\int A_\nu (U_0^{(1)} \pa_z \om)\ A_\nu \om_\nq dVd\tau\bigg|=:\sum_{i=1}^2T_{\Omega_\nq;1i}.\label{T_1_12}
\end{align}
For the  $T_{\Omega_\nq;11}$-term, we first invoke the Biot-Savart law to rewrite the velocity as $U_\nq=\na_L^\perp \de_L^{-1}\om_\nq$, and then apply the product estimate associated with the multiplier $A_\nu$  \eqref{A_product_rule_Hs} to obtain the  following bound
\begin{align}
T_{\Omega_\nq;11}\leq& C\int_0^t \|A_{\nu}(\na_L^\perp\de_L^{-1}\om_\nq \cdot \na_L \om )\|_{L^2}\|A_\nu \om_\nq\|_{L^2}d\tau 
\leq C\int _0^t\|A_\nu \na^\perp_L \de_L^{-1}\Omega_\nq\|_{L^2}\|A_\nu\na_L\om\|_{L^2}d\tau\|A_\nu\Omega_\nq\|_{L_t^\infty L^2}.
\end{align} Next we observe that the $M_\nu$-properties \eqref{M_bound}, \eqref{M_property_common_dot_M} imply that $\|A_\nu{\na_L^\perp} \de_L^{-1}\om_\nq\|_{L^2}\leq C\|A_\nu \sqrt{\frac{-\pa_t M_\nu}{M_\nu}}\om_\nq\|_{L^2}$. Combining this with the bootstrap hypothesis \eqref{Hypothesis_omega_enhanced_dissipation}, \eqref{Hypothesis_Omega_0} yields that
\begin{align}
T_{\Omega_\nq;11}\leq &\frac{C}{\sqrt\nu}\norm{A_\nu\sqrt{\frac{-\pa_t M_\nu}{M_\nu}}\om_\nq}_{L_t^2L^2}(\nu^{1/2}\|A_\nu\sqrt{-\de_L }\om_\nq\|_{L_t^2L^2}+\nu^{1/2}\|\pa_y \om_0\|_{L_t^2 H^s})\|A_\nu\Omega_\nq\|_{L_t^\infty L^2}\\
\leq &C\ep(\mathcal{B}_{\om_\nq }+\mathcal{B}_{\om_0})\mathcal{B}_{\om_\nq}^2\ep^2 \nu. \label{T_om_nq_11}
\end{align}
This is consistent with the estimate \eqref{T_1_bound}. 

For the $T_{\Omega_\nq;12}$ term in \eqref{T_1_12}, we first observe there is a cancellation relation 
\begin{align}
\int \pa_y\de_y^{-1}\om_0 \ \pa_z A_\nu\om_\nq  \ A_\nu \om_\nq dV=\int \pa_y\de_y^{-1}\om_0 \pa_z\left(\frac{A_\nu \om_\nq}{2}\right)^2dV=0.
\end{align}
Hence we can rewrite the term $T_{\Omega_\nq;12}$ using the Biot-Savart law $U_0^{(1)}=-\pa_y\de_y^{-1}\om_0$ as follows:
\begin{align}
T&_{\Omega_\nq;12} =2\bigg|\int_0^t \int \left( A_\nu(-\pa_y\de_y^{-1}\om_0\  \pa_z \om_\nq)+\pa_y\de_y^{-1} \om_0\ \pa_z A_\nu \om_\nq \right) A_\nu \om_\nq\  dVd\tau\bigg|\\
=&C\bigg|\sum_{k\neq0}\int_0^t\iint \left(M_\nu(\tau,k,\eta)\lan k,\eta\ran^s-M_\nu (\tau,k,\xi)\lan k,\xi\ran^s\right)\ \frac{i(\eta-\xi)}{(\eta-\xi)^2}\wh \om_0(\eta-\xi)\\
&\quad\quad\quad\quad \quad\quad\times \left( k e^{\delta\kappa^{1/3}|k|^{2/3}\tau}\wh  \om(\tau,k,\xi)\right)\ \overline{{A_\nu \wh\om(\tau,k,\eta)}} \ d\eta d\xi d\tau\bigg|.\label{commutator_form}
\end{align}\myb{
Now we invoke the commutator estimate \eqref{commutator_estimate} and the Young's convolution inequality to obtain that
\begin{align}
&T_{\Omega_\nq;12}\\
 &\leq C\bigg|\int_0^t\sum_{k\neq0}\iint \left(\lan k,\xi\ran^s+\lan\eta-\xi\ran^s \right)\ |\wh \om_0(\eta-\xi)|\left|  e^{\delta\kappa^{1/3}|k|^{2/3}\tau}\wh  \om(\tau,k,\xi)\right|\ | {A_\nu \wh\om(\tau,k,\eta)}| \ d\eta d\xi d\tau\bigg|\\
&\leq C\bigg|\int_0^t \sum_{k\nq 0}\|A_\nu\wh  \om_k(\cdot)\|_{L_\eta^2}\left(\|\lan \cdot\ran^s \wh \om_0(\cdot)\|_{L_\eta^2}\|e^{\delta\kappa^{1/3}|k|^{2/3}\tau}\wh \om_k( \cdot)\|_{L_\eta^1}+\|\wh\om_0(\cdot)\|_{L_\eta^1}\|e^{\delta \kappa^{1/3}|k|^{2/3}\tau}\lan k,\cdot\ran^s\wh \om_k(\cdot)\|_{L^2_\eta}\right)\ d\tau\bigg|.
\end{align}Combining the $M_\nu$-properties \eqref{M_bound},  \eqref{M_property_ED}, the definition of $A_\nu$ \eqref{A_N_Omega} and the inequality $\|\wh f(\cdot)\|_{L^1_\eta}\leq C\|\lan\cdot\ran^s\wh f(\cdot)\|_{L^2_\eta},\, s\geq 1$ yields that
\begin{align}
T_{\Omega_\nq;12}\leq &C\int_0^t \sum_{k\nq0}\|A_\nu \wh\om_k(\cdot)\|_{L^2_\eta}^2d\tau \|\om_0\|_{L^\infty_t H^s}\\ 
 \leq &C\nu^{-1/3}\left(\int_0^t\left\|A_\nu\sqrt{\frac{-\pa_\tau M_\nu}{M_\nu}}\om_\nq\right\|_{L^2}^2 +\nu\|A_\nu\sqrt{-\de_L}\Omega_\nq\|_{L^2}^2 d\tau\right)\|\om_0\|_{L^\infty_t H^s} .
 \end{align}Hence the bootstrap hypotheses \eqref{Hypothesis_omega_enhanced_dissipation}, \eqref{Hypothesis_Omega_0} implies that
\begin{align}
T_{\Omega_\nq;12} \leq  C\ep^3 \nu\mathcal{B}_{\om_\nq}^2  \mathcal{B}_{\om_0}.\label{T_om_nq_12}
\end{align}}
Combining the decomposition \eqref{T_1_12} and the estimates \eqref{T_om_nq_11},  \eqref{T_om_nq_12}, we have obtained the result \eqref{T_1_bound}.
\end{proof}

\begin{proof}[Proof of Lemma \ref{lem:T_2}]
We further decompose the $T_{\Omega_\nq;2}$ term in \eqref{T12} as follows:\begin{align}
T_{\Omega_\nq;2}\leq &2\kappa \bigg|\int_0^{t}\int  A_\nu(\pa_z  (N_{\neq} \pa_y  C_0))\ A_\nu \Omega_{\neq}dVd\tau\bigg|+2\kappa\bigg| \int_0^{t}\int A_\nu(\na_L^\perp\cdot (N \na_L C_{\neq}))\  A_\nu\Omega_{\neq} dVd\tau\bigg|\\
=:&T_{\Omega_\nq;21}+T_{\Omega_\nq;22}.\label{T_om_nq_2_12}
\end{align} 
Before analyzing the terms, we make a comment about the multipliers. Thanks to the $\{M_\nu,M_\kappa\}$-property \eqref{M_bound} and the definitions of $\{A_\nu,A_\kappa\}$ \eqref{A_N_Omega}, we have that the $A_\nu$, $A_\kappa$ multipliers are comparable, i.e., 
\begin{align}\label{A_nu_kappa_comparison}
\frac{1}{16\pi^4}A_\kappa (t,k,\eta)\leq A_\nu(t,k,\eta)\leq 16\pi ^4 A_\kappa(t,k,\eta). 
\end{align}
As a result, we have the freedom to adjust the multipliers $\{A_\iota\}_{\iota\in\{\kappa,\nu\}}$ when considering different objects. 

With the multiplier properties explained, we start the estimate.
\ifx by decomposing the $T_{\Omega_\nq;21}$ term as follows:
\begin{align}
T_{\Omega_\nq;21}\leq&\bigg|C\kappa \sum_{|k|\in(0,\kappa^{-1/2}] }\int_0^t \iint A_\nu(\tau,k,\eta) \left(ik \wh N(\tau,k,\xi)  {i(\eta-\xi)} \wh C_0(\tau,\eta-\xi) \right) \ \overline{A_\nu \wh\Omega(\tau,k,\eta)}d\eta d\xi d\tau \bigg|\\
&+   \bigg|C\kappa \sum_{|k|>\kappa^{-1/2} }\int_0^t \iint A_\nu(\tau,k,\eta) \left(ik \wh N(\tau,k,\xi)  {i(\eta-\xi)} \wh C_0(\tau,\eta-\xi) \right)\ \overline{A_\nu  \wh\Omega(\tau,k,\eta)} d\eta d\xi d\tau\bigg|\\
=:&T_{\Omega_\nq;21}^\text{low} +T_{\Omega_\nq;21}^\text{high}.\label{T_om_nq_21_l_h}
\end{align}
\fi
To estimate the $T_{\Omega_\nq;21}$-term, we apply the relation $(1-\pa_{yy})C_0=N_0$, the definition \eqref{A_N_Omega}, and the $\pa_t M_\kappa$-estmate \eqref{M_property_common_dot_M} to get the following \begin{align}
T_{\Omega_\nq;21}
\leq &C\kappa \sum_{k\nq 0 }\int_0^t\iint\bigg| \sqrt{|k|^2+|\eta-k\tau|^2}{A_\nu  \wh\Omega(\tau,k,\eta)}\bigg| \\
&  \quad\quad\quad\quad \quad\quad\quad\times\bigg| A_\kappa(\tau,k,\eta)  \sqrt{\frac{|k|^2}{|k|^2+|\eta-k\tau|^2}}\left( | \wh N(\tau,k,\xi)  |\frac{|\eta-\xi|}{1+|\eta-\xi|^2}| \wh N_0(\tau,\eta-\xi)| \right)\bigg|d\eta d\xi d\tau\\
\leq &C\kappa \sum_{k\nq 0}\int_0^t\iint\bigg|\sqrt{|k|^2+|\eta-k\tau|^2}{ A_\nu\wh\Omega(\tau,k,\eta)}\bigg| \\
&  \quad\times \bigg|M_\kappa(\tau,k,\eta)\lan k,\eta \ran ^{s}  \sqrt{ -\pa_\tau M_\kappa(\tau,k,\eta) }\left( e^{\delta\kappa^{1/3}|k|^{2/3}\tau} |\wh N(\tau,k,\xi)|  \frac{|\eta-\xi|}{1+|\eta-\xi|^2}| \wh N_0(\tau,\eta-\xi)| \right)\bigg|d\eta d\xi d\tau.
\end{align}
Now we invoke the $M_\kappa$ properties \eqref{M_bound},  \eqref{M_5} to obtain
\begin{align}
&T_{\Omega_\nq;21}\\
 &\leq C\kappa \sum_{k\nq 0}\int_0^t\iint\bigg|\sqrt{|k|^2+|\eta-k\tau|^2} { A_\nu\wh\Omega(\tau,k,\eta)}\bigg| \\
&\quad\times(M_\kappa(\tau,k,\xi)\lan k,\xi \ran ^{s}+\lan \eta-\xi\ran^s) e^{\delta\kappa^{1/3}|k|^{2/3}\tau} \sqrt{ -\pa_\tau M_\kappa(\tau,k,\xi) }|\wh N(\tau,k,\xi)| \ \frac{|\eta-\xi|\lan \eta-\xi\ran}{1+|\eta-\xi|^2} |\wh N_0(\tau,\eta-\xi) | d\eta d\xi d\tau. 
\end{align}\myb{
Now we apply similar argument as in \eqref{T_om_nq_12} to estimate the term. Application of Young's convolution inequality, the $M_\kappa$-bound \eqref{M_bound}, the $A_\kappa$-definition \eqref{A_N_Omega}, and the fact that $\|\wh f(\cdot)\|_{L^1_\eta}\leq C\|\lan\cdot\ran^s\wh f(\cdot)\|_{L^2_\eta},\, s\geq 1$  yields that
\begin{align}
T_{\Omega_\nq;21}\leq &\frac{1}{8}\nu\int_0^t\left\|A_\nu\sqrt{-\de_L}\om_\nq \right\|_{L^2}^2d\tau+C\ep^2 \nu\int_0^t\left\|A_\kappa \sqrt{\frac{-\pa_\tau M_\kappa}{M_\kappa}}N_\nq\right\|_{L^2}^2\|N_0\|_{H^s}^2d\tau\\
\leq &\frac{1}{8}\nu\int_0^t\left\|A_\nu\sqrt{-\de_L}\om_\nq \right\|_{L^2}^2d\tau+C\ep^2 \nu \mathcal{B}_{N_\nq}^2\mathcal{B}_{N_0}^2.\label{T_om_nq_2_1}
\end{align}
Here in the last line, the hypotheses \eqref{Hypothesis_cell_density_enhanced_dissipation} and \eqref{Hypothesis_N_0} are employed. }
\ifx
Next we estimate the $T_{\Omega_\nq;21}^{\text{high}}$ using the product estimate \eqref{A_product_rule_Hs} and the relation $(1-\pa_{yy})C_0=N_0$ as follows
\begin{align}
T_{\Omega_\nq;21}^{\text{high}}\leq & C\left|\kappa \sum_{|k|>\kappa^{-1/2}}\int_0^t\iint \overline{A_\nu ik\wh\Omega(\tau,k,\eta)} \ A_\nu(\tau,k,\eta) \left(\frac{k}{k} \wh N(\tau,k,\xi)  \frac{i(\eta-\xi)}{1+|\eta-\xi|^2} \wh N_0(\tau,\eta-\xi) \right)d\eta d\xi\right| \\
\leq &\frac{1}{8}\nu\int_0^t\|A_\nu \pa_z \om_\nq\|_2^2d\tau+C\ep^2 \nu\kappa \int_0^t\|A_\kappa \pa_z N_\nq\|_2^2\|  N_0\|_{H^{s}}^2 d\tau.
\end{align}
Now we invoke the hypothesis \eqref{Hypothesis_cell_density_enhanced_dissipation} to obtain that 
\begin{align}T_{\Omega_\nq;22}^\text{high}\leq &\frac{1}{8}\nu\int_0^t\|A_\nu \pa_z \om_\nq\|_2^2d\tau+C\ep^2 \nu \mathcal{B}_{N_\nq}^2\mathcal{B}_{\om_0}^2.
\end{align} This is consistent with \eqref{T_2_bound}. 
\fi
  
For the $T_{\Omega_\nq;22}$ term in \eqref{T_om_nq_2_12}, we apply integration by parts, and then estimate it with the product estimate  \eqref{A_product_rule_Hs}, the elliptic estimate \eqref{Green_M_t_est}, and the bootstrap hypotheses \eqref{Hypothesis_cell_density_enhanced_dissipation}, \eqref{Hypothesis_N_0} as follows
\myb{\begin{align}
T_{\Omega_\nq;22}\leq & \kappa\int_0^{t}\|A_\nu\na_L^\perp \om_{\neq}\|_{L^2} (\| N_{0}\|_{H^s}+\|A_\kappa N_\nq\|_{L^2})\|A _\kappa\na_L C_{\neq}\|_{L^2} d\tau\\
\leq&\frac{1}{16}\nu\int_0^{t}\|A_\nu\sqrt{-\de_L} \om_{\neq}\|_{L^2}^2d\tau+C\ep^2 \nu(\| N_0\|_{L^\infty_t H^s}^2+\|A_\kappa N_\nq\|_{L_t^\infty L^2}^2)\|A_\kappa\na_L(1-\de_L)^{-1} N_{\neq}\|_{L_t^2L^2}^2\\
\leq &\frac{1}{16}\nu\int_0^{t}\|A_\nu\sqrt{-\de_L} \om_{\neq}\|_{L^2}^2d\tau+C\ep^2 \nu(\| N_0\|_{L^\infty_t H^s}^2+\|A_\kappa N_\nq\|_{L_t^\infty L^2}^2)\left\|A_\kappa\sqrt{\frac{-\pa_t M_\kappa }{M_\kappa}}N_{\neq}\right\|_{L_t^2L^2}^2\\
\leq&\frac{1}{16}\nu\int_0^{t}\|A_\nu\sqrt{-\de_L} \om_{\neq}\|_{L^2}^2d\tau+C\ep^2\nu (\mathcal{B}_{N_0}^2+\mathcal{B}_{N_\nq}^2)\mathcal{B}_{N_\nq}^2.
\end{align}}Combining this with the decomposition \eqref{T_om_nq_2_12} and the bound \eqref{T_om_nq_2_1} yields the estimate  \eqref{T_2_bound}.

 \end{proof}

\subsection{The $z$-average Estimates}\label{Sect:Fluid_2}In this subsection, we prove the estimate \eqref{Conclusion_Omega_0}. First of all, we recall the equation \eqref{Omega_0}, and decompose the nonlinearity as follows  
\begin{align}
\pa_t \Omega_0+(U_\nq\cdot \na_L \Omega_\nq)_0+(U_0\cdot \na\Omega_0)_0=\nu \de \Omega_0+\kappa\left(\na_L^\perp\cdot(N_0\na_L C_0)\right)_0+\kappa\left(\na_L^\perp\cdot(N_\nq\na_L C_\nq)\right)_0.
\end{align}
Here we observe two null-structures which lead to simplification. First, we observe that by the Biot-Savart law, the vertical velocity field $U_0^{(2)}=\pa_z\de_y^{-1}\om_0=0$. Hence,
\begin{align}
(U_0\cdot \na \om_0)_0=(U_0^{(1)}\pa_z\om_0)_0=0.
\end{align} 
On the other hand, the following term vanishes,
\begin{align}
\kappa\left(\na_L^\perp\cdot ( N_0 \na_L C_0)\right)_0=-\kappa\left((\pa_y-t\pa_z)(N_0\pa_z C_0)\right)_0+\kappa\left(\pa_z (N_0\pa_y C_0)\right)_0=0.
\end{align}
Hence the $\om_0$-equation can be simplified to the following,
\begin{align}
\pa_t \Omega_0-\nu \de \Omega_0 =-(U_\nq\cdot \na_L \Omega_\nq)_0+\kappa\left(\na_L^\perp\cdot(N_\nq\na_L C_\nq)\right)_0.
\end{align}
\ifx 
As a result, we might have that 
\begin{align}
\|\Omega_0\|_{H^N}\leq \mathcal{B}_{\om_0}\ep.
\end{align} 
\fi 
Recalling that $A_\nu(t,k=0,\eta)={\pi^2 \lan \eta\ran^{s}}$, we calculate the time evolution of the $\|A_\nu\om_0\|_{L^2}^2$ as follows:
\begin{align}
\frac{d}{dt}\frac{1}{2}\|A_\nu\om_0\|_{L^2}^2=-\nu\|A_\nu \pa_y \om_0\|_{L^2}^2-\int A_\nu((U_\nq\cdot \na_L \Omega_\nq)_0) \ A_\nu \om_0 dV+\kappa\int  A_\nu\left(\na_L^\perp\cdot(N_\nq\na_L C_\nq)\right)_0\ A_\nu\om_0 dV.
\end{align}
Now integration in time yields
\begin{align}\label{T_om_0_12}
\|A_\nu&\om_0(t)\|_{L^2}^2+2\nu\int_0^t\|A_\nu \pa_y\om_0\|_{L^2}^2d\tau\\
 \leq &\|A_\nu\om_{\text{in};0}\|_{L^2}^2+2\bigg|\int_0^t\int A_\nu((U_\nq\cdot \na_L \Omega_\nq)_0) \ A_\nu \om_0 dVd\tau\bigg|+2\kappa\bigg|\int_0^t\int  A_\nu\left(\na_L^\perp\cdot(N_\nq\na_L C_\nq)\right)_0 \ A_\nu\om_0 dVd\tau\bigg|\\
=:&\|A_\nu\om_{\text{in};0}\|_{L^2}^2+T_{\om_0;1}+T_{\om_0;2}.
\end{align}
We rewrite the $T_{\om_0;1}$-term in \eqref{T_om_0_12} with the Biot-Savart law, and then estimate it with the product estimate \eqref{A_product_rule_Hs},  the $M_\nu$-bound \eqref{M_bound}, the elliptic estimate \eqref{Green_M_t_est}, and the  hypotheses \eqref{Hypothesis_omega_enhanced_dissipation}, \eqref{Hypothesis_Omega_0} as follows
\begin{align}
T_{\om_0;1}=& \int_0^t\int A_\nu (\na_L^\perp\de_L^{-1}\om_{\neq}\cdot \na _L \Omega_{\neq})_0\ A_\nu \Omega_0 dV  
\leq  C\int_0^{t} \|A_\nu\na_L^\perp \de_L^{-1}\om_{\neq}\|_{L^2} \|A_\nu \na _L \Omega_{\neq} \|_{L^2}\| A_\nu\Omega_0\|_{L^2}d\tau\\
\leq &C\left\|A_\nu\sqrt{\frac{-\pa_t M_\nu}{M_\nu}} \om_{\neq}\right\|_{L_t^2L^2}\|A_\nu \na_L \Omega_{\neq}\|_{L_t^2L ^2}\|\Omega_0\|_{L_t^\infty H _y^s} 
 \leq   {\ep^{2}}{\nu}\left(\ep C \mathcal{B}_{\om_0}\mathcal{B}_{\om_\nq}^2\right).\label{T_1'} 
\end{align}
Next we estimate $T_{\om_0;2}$-term in \eqref{T_om_0_12}. By observing $(\pa_z F)_0\equiv 0$ and integration by parts, we rewrite the term as follows,  
\begin{align}
T_{\om_0;2}
=&2\bigg|\kappa \int_0^t\int A_\nu\Omega_0 A_\nu((\pa_y-t\pa_z) (N_{\neq}\pa_z C_{\neq}))_0dVd\tau\bigg| 
= 2\bigg|\kappa\int_0^t \int A_\nu\pa_y\Omega_0 \ A_\nu(N_{\neq}\pa_z C_{\neq})_0dV d\tau\bigg|.
\end{align}Application of the product estimate \eqref{A_product_rule_Hs} and the fact that $A_\nu\approx A_\kappa$ \eqref{A_nu_kappa_comparison} yields that \begin{align}
T_{\om_0;2}\leq& \frac{1}{2}\nu\|A_\nu\pa_y \Omega_0\|_{L_t^2L^2}^2+C\ep^2 \nu\|A_\kappa N_{\neq}\|_{L_t^\infty L^2}^2\|A_\kappa\pa_z C_{\neq}\|_{L_t^2L^2}^2.
\end{align}
After invoking the relation $\pa_z C_\nq=\pa_z(1-\de_L)^{-1}N_\nq$, the $M_\kappa$-estimate \eqref{M_bound} and the elliptic estimate   \eqref{Green_M_t_est}, we estimate the $T_{\om_0;2}$-term with the hypotheses \eqref{Conclusion_cell_density_enhanced_dissipation} as follows
\begin{align}
T_{\om_0;2}\leq&\frac{1}{2}\nu\|A_\nu\pa_y \Omega_0\|_{L_t^2L^2}^2 +C\ep^2 \nu \|A_\kappa N_{\neq}\|^2_{L_t^\infty L^2}\|A_\kappa\pa_z(1-\de_L)^{-1} N_{\neq}\|_{L_t^2L^2}^2\\
\leq&\frac{1}{2}\nu\|A_\nu\pa_y \Omega_0\|_{L_t^2L^2}^2+ C\ep^2 \nu\|A_\kappa N_\nq\|_{L_t^\infty L^2}^2\left\|A_\kappa\sqrt{\frac{-\pa_t M_\kappa}{ M_\kappa}}N_{\neq}\right\|_{L_t^2L^2}^2 
\leq  \frac{1}{2}\nu\|A_\nu\pa_y \Omega_0\|_{L_t^2L^2}^2 +C \mathcal{B}_{N_\nq}^4\ep^2 \nu.\label{T_2'}
\end{align} 
Combining the decomposition \eqref{T_om_0_12}, the estimates \eqref{T_1'}, \eqref{T_2'}, and the initial constraint \eqref{initial_data}, we have that 
\begin{align}\label{A_nu_om_0}\quad\quad 
\|A_\nu \om_0(t)\|_{L^2}^2+\nu \|A_\nu\pa_y\om_0\|_{L_t^2L^2}^2 \leq& C\|\om_{\text{in};0}\|_{H^s}^2+\ep^2 \nu C(\ep  \mathcal{B}_{\om_0}\mathcal{B}_{\om_\nq}^2+\mathcal{B}_{N_\nq}^4) 
\leq \ep^2 \nu C(1+\ep  \mathcal{B}_{\om_0}\mathcal{B}_{\om_\nq}^2+\mathcal{B}_{N_\nq}^4).
\end{align}
Here $C\geq1 $ is a constant depending only on the regularity level $s$.  
Hence the following choice of constants guarantees the conclusion \eqref{Conclusion_Omega_0}
\begin{align}\label{Choice_of_Const_2}
\mathcal{B}_{\om_0}^2\geq 4C(1+\mathcal{B}_{\om_\nq}^2+\mathcal{B}_{N_\nq}^4),\quad \ep \leq \frac{1}{ 1+\mathcal{B}_{\om_0} }.
\end{align}
Here the constant $C$ is the one in \eqref{A_nu_om_0}. 
\section{Cell Density Estimates}\label{Sect:Cell}
In this section, we derive the estimates associated with the cell dynamics. We organize the proof into two subsections. In Subsection \ref{Sect:Cell_1}, we prove the enhanced dissipation estimate of the cell density's remainder $N_\nq$ \eqref{Conclusion_cell_density_enhanced_dissipation}. In Subsection \ref{Sect:Cell_2}, we prove the $z$-average ($N_0$) estimate \eqref{Conclusion_N_0}.
\subsection{The Remainder of the Cell Density}\label{Sect:Cell_1}
In this section, we prove the estimate \eqref{Conclusion_cell_density_enhanced_dissipation}. 
First we calculate the time evolution of $\|A_\kappa N_{\neq}\|_{L^2}^2$ using the equation  \eqref{N_nq}: 
\begin{align}
\frac{1}{2}\|A_\kappa N_{\neq}(t)\|_{L^2}^2 
=&\frac{1}{2}\|A_\kappa N_{\text{in};\neq}\|_{L^2}^2+\delta\kappa^{1/3}\int_0^t\||\pa_x|^{1/3}A_\kappa N_{\neq}\|_{L^2}^2d\tau-\int_0^t \left\|\sqrt{\frac{-\pa_tM_\kappa}{  M_\kappa}}A_\kappa N_{\neq}\right\|_{L^2}^2d\tau\\
&-\kappa \int_0^t\|A_\kappa\sqrt{-\de_L}N_{\neq}\|_{ L^2}^2d\tau-\int_0^t\int A_\kappa N_\nq \ A_\kappa (U\cdot \na_L N)_\nq dVd\tau\nb\\
&-\kappa\int_0^t\int A_\kappa N_\nq \ A_{\kappa}(\na_L\cdot(N\na_L C))_\nq  dV d\tau.
\end{align}
Recalling the relation \eqref{M_property_ED} and the null condition $U_0^{(2)}=\pa_z\de_y^{-1}\Omega_0=0$, we have that if $\delta\leq \frac{1}{16\pi^2}$, then 
\begin{align}
\frac{1}{2}\|A_\kappa N_{\neq}(t)\|_{L^2}^2 \leq &\frac{1}{2}\|A_\kappa N_{\text{in};\neq}\|_{L^2}^2-\frac{1}{2}\int_0^t \left\|\sqrt{\frac{-\pa_\tau M_\kappa}{  M_\kappa}}A_\kappa N_{\neq}\right\|_{L^2}^2d\tau-\frac{\kappa}{2} \|A_\kappa\sqrt{-\de_L}N_{\neq}\|_{L_t^2L^2}^2\\
&+\bigg|\int_0^t\int  A_\kappa(U_0^{(1)}\ \pa_z N_{\neq})\ A_\kappa N_{\neq}  dVd\tau\bigg| +\bigg|\int_0^t\int  A_\kappa N_{\neq}\ A_\kappa\na_L \cdot (N\ {\na_L^\perp}\de_L^{-1}\Omega_{\neq}) dVd\tau\bigg|\\
&+\bigg|\kappa \int_0^t\int A_\kappa    \pa_y^\tau \left(N_{\neq} \pa_y C_{0}  \right)\ A_\kappa N_{\neq} dVd\tau \bigg| +\bigg|\kappa \int_0^t\int A_\kappa  \na_L\cdot\left( N \na_L C_{\neq}\right)\ A_\kappa N_{\neq} dVd\tau \bigg| \\ 
=:&\frac{1}{2}\|A_\kappa N_{\text{in};\neq}\|_{L^2}^2- {\frac{1}	{2}\int_0^t\norm{\sqrt{\frac{-\pa_\tau M_\kappa}{  M_\kappa}}A_\kappa N_{\neq}}_{L^2}^2d\tau} -\frac{\kappa}{2} \|A_\kappa\sqrt{-\de_L}N_{\neq}\|_{L_t^2L^2}^2\\
&+{T}_{N_\nq;11}+{T}_{N_\nq;12}+{T}_{N_\nq;21}+{T}_{N_\nq;22}.\label{N_T_1T_2}
\end{align}
The estimates of the terms in \eqref{N_T_1T_2} are collected in the following lemmas. 
\begin{lem}\label{lem:N_T_1}
The $T_{N_\nq;11},\ T_{N_\nq;12}$ terms in \eqref{N_T_1T_2} are bounded as follows 
\begin{align}
T_{N_\nq;11}+ T_{N_\nq;12}\leq C\ep^{1/2}(\mathcal{B}_{\om_\nq}^2+\mathcal{B}_{\om_0}^2+\mathcal{B}_{N_\nq}^2  + \mathcal{B}_{N_0}^2 )\mathcal{B}_{N_\nq} .\label{N_T_1}
\end{align} 
Here $C$ is a constant depending only on the regularity level  $s$. 
\end{lem}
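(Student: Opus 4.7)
The plan is to treat the two contributions separately, each mirroring an argument already executed in Section \ref{Sect:Fluid_1}.

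For $T_{N_\nq;11}$, I would imitate the commutator argument used for $T_{\Omega_\nq;12}$. Because $U_0^{(1)}=-\pa_y\de_y^{-1}\om_0$ depends only on $y$, integration by parts in $z$ gives the cancellation $\int U_0^{(1)}\,\pa_z\bigl(\tfrac12(A_\kappa N_\nq)^2\bigr)\,dV=0$, so $T_{N_\nq;11}$ equals exactly the commutator expression obtained from \eqref{commutator_form} by replacing $A_\nu,M_\nu,\om$ with $A_\kappa,M_\kappa,N$. The commutator estimate \eqref{commutator_estimate} combined with Young's convolution inequality and the $M_\kappa$-properties \eqref{M_bound}, \eqref{M_property_ED}, exactly as in the derivation of \eqref{T_om_nq_12}, produces
\[
T_{N_\nq;11}\leq C\kappa^{-1/3}\!\left(\int_0^t\!\Bigl\|A_\kappa\sqrt{-\tfrac{\pa_\tau M_\kappa}{M_\kappa}}N_\nq\Bigr\|_{L^2}^2+\kappa\|A_\kappa\sqrt{-\de_L}N_\nq\|_{L^2}^2\,d\tau\right)\!\|\om_0\|_{L^\infty_tH^s}.
\]
Invoking \eqref{Hypothesis_cell_density_enhanced_dissipation} for the parenthetical factor, \eqref{Hypothesis_Omega_0} for $\|\om_0\|_{L^\infty_tH^s}\lesssim\mathcal{B}_{\om_0}\ep\nu^{1/2}$, and the parameter relation $\kappa=\ep\nu$, yields $T_{N_\nq;11}\leq C\ep^{2/3}\nu^{1/6}\mathcal{B}_{N_\nq}^2\mathcal{B}_{\om_0}\leq C\ep^{1/2}(\mathcal{B}_{N_\nq}^2+\mathcal{B}_{\om_0}^2)\mathcal{B}_{N_\nq}$ by AM-GM and $\nu\leq 1$.

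For $T_{N_\nq;12}$ I would follow the blueprint used for $T_{\Omega_\nq;11}$. Using $\na_L\cdot U_\nq=0$, rewrite the nonlinearity as $U_\nq\cdot\na_L N$ and decompose $N=N_0+N_\nq$. Applying the product estimate \eqref{A_product_rule_Hs} with the comparability \eqref{A_nu_kappa_comparison} and the elliptic bound \eqref{Green_M_t_est} $\|A_\nu U_\nq\|_{L^2}\lesssim\|A_\nu\sqrt{-\pa_\tau M_\nu/M_\nu}\,\Omega_\nq\|_{L^2}$, then Cauchy--Schwarz in $\tau$, gives
\[
T_{N_\nq;12}\lesssim\|A_\kappa N_\nq\|_{L^\infty_tL^2}\Bigl\|A_\nu\sqrt{-\tfrac{\pa_\tau M_\nu}{M_\nu}}\Omega_\nq\Bigr\|_{L^2_tL^2}\Bigl(\|A_\kappa\sqrt{-\de_L}N_\nq\|_{L^2_tL^2}+\|\pa_y N_0\|_{L^2_tH^s}\Bigr).
\]
By \eqref{Hypothesis_cell_density_enhanced_dissipation}, $\|A_\kappa\sqrt{-\de_L}N_\nq\|_{L^2_tL^2}\leq\sqrt 2\,\mathcal{B}_{N_\nq}/\sqrt\kappa$, and by \eqref{Hypothesis_omega_enhanced_dissipation} the $\Omega_\nq$-factor is $\leq\sqrt 2\,\mathcal{B}_{\om_\nq}\ep\sqrt\nu$; together with $\kappa=\ep\nu$ this collapses to $\sqrt\ep\,\mathcal{B}_{\om_\nq}\mathcal{B}_{N_\nq}$, producing the desired $\ep^{1/2}\mathcal{B}_{\om_\nq}\mathcal{B}_{N_\nq}^2$ contribution. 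For the $\|\pa_y N_0\|_{L^2_tH^s}$ piece I would supplement \eqref{Hypothesis_N_0} with an auxiliary parabolic estimate $\kappa\|\pa_y N_0\|_{L^2_tH^s}^2\lesssim\mathcal{B}_{N_0}^2$ drawn from the $N_0$-equation, giving an analogous $\ep^{1/2}\mathcal{B}_{\om_\nq}\mathcal{B}_{N_0}\mathcal{B}_{N_\nq}$ contribution. AM-GM then absorbs everything into \eqref{N_T_1}.

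The main obstacle is the $N_0$-piece of $T_{N_\nq;12}$: the bootstrap hypothesis \eqref{Hypothesis_N_0} records only an $L^\infty_tH^s$ bound and omits the parabolic $\kappa\pa_{yy}$-dissipation needed to control $\|\pa_y N_0\|_{L^2_tH^s}$. One either derives this dissipation bound as a separate ingredient alongside \eqref{Hypothesis_N_0} before closing the $N_\nq$-bootstrap, or redistributes the $y$-derivative onto $U_\nq^{(2)}=\pa_z\de_L^{-1}\Omega_\nq$ via integration by parts (harmless because $s\geq 5$). Either route preserves the $\ep^{1/2}$ gain and delivers \eqref{N_T_1}.
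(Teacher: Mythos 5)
Your treatment of $T_{N_\nq;11}$ is essentially the paper's argument: the commutator form, \eqref{commutator_estimate}, Young's inequality, and \eqref{ED_M_Lt2Lzy2} combined with \eqref{Hypothesis_cell_density_enhanced_dissipation}, \eqref{Hypothesis_Omega_0} give exactly the paper's bound $C\mathcal{B}_{\om_0}\mathcal{B}_{N_\nq}^2\ep^{1/2}\kappa^{1/6}$, which AM-GM converts to the stated form. No issues there.

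For $T_{N_\nq;12}$, however, there is a genuine gap in the order of operations, and you have noticed it yourself but not resolved it the way the paper does. Your displayed bound keeps $A_\kappa N_\nq$ undifferentiated in $L^\infty_tL^2$ and puts the derivative onto the inner $N$, which forces the unwanted factor $\|\pa_y N_0\|_{L^2_tH^s}$. That quantity is simply not provided by \eqref{Hypothesis_N_0} (which records only an $L^\infty_t H^s$ bound, with no $\kappa$-dissipation integral), so the estimate cannot be closed within the bootstrap as stated. Neither of your proposed fixes matches the paper: enlarging the hypotheses with an auxiliary $\kappa\|\pa_y N_0\|^2_{L^2_tH^s}\lesssim\mathcal{B}_{N_0}^2$ is not part of the bootstrap scheme, and the ``redistribute onto $U_\nq^{(2)}$'' idea does not survive the Fourier multiplier $A_\kappa$ (which does not split across products).

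The resolution in the paper is simpler and removes the problem at the source: integrate by parts \emph{before} applying the product estimate, moving the full $\na_L$ from the divergence $\na_L\cdot(N\,\na_L^\perp\de_L^{-1}\Omega_\nq)$ onto the test factor $A_\kappa N_\nq$. This yields
\begin{align}
T_{N_\nq;12}=\bigg|\int_0^t\int\na_L A_\kappa N_\nq\cdot A_\kappa\bigl(\na_L^\perp\de_L^{-1}\Omega_\nq\,N\bigr)\,dV\,d\tau\bigg|
\leq\|A_\kappa\sqrt{-\de_L}N_\nq\|_{L^2_tL^2}\,\bigl\|A_\kappa\bigl(\na_L^\perp\de_L^{-1}\Omega_\nq\,N\bigr)\bigr\|_{L^2_tL^2}.
\end{align}
The inner $N=N_0+N_\nq$ now appears \emph{undifferentiated}; after the product rule \eqref{A_product_rule_Hs}, the comparability \eqref{A_nu_kappa_comparison}, and the elliptic estimate \eqref{Green_M_t_est}, the $L^\infty_t$ slot is occupied by $\|A_\kappa N_0\|_{L^\infty_tL^2}+\|A_\kappa N_\nq\|_{L^\infty_tL^2}\lesssim\mathcal{B}_{N_0}+\mathcal{B}_{N_\nq}$, while the two $L^2_t$ slots are the dissipation factors already supplied by \eqref{Hypothesis_cell_density_enhanced_dissipation} and \eqref{Hypothesis_omega_enhanced_dissipation}. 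The parameter bookkeeping $\kappa^{-1/2}\cdot\ep\nu^{1/2}=\ep^{1/2}$ then produces the claimed bound. So you should integrate by parts onto $A_\kappa N_\nq$ first, rather than decomposing $U_\nq\cdot\na_L N$ and trying to recover $\pa_y N_0$ afterward.
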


\begin{lem}\label{lem:N_T_2}
The ${T}_{N_\nq;21}$ and ${T}_{N_\nq;22}$ terms are bounded as follows
\begin{align}{T}_{N_\nq;2  1}+{T}_{N_\nq;2  2}\leq\frac{\kappa}{8} \int_0^t\|A_\kappa \sqrt{-\de_L}N_\nq\|_{L^2}^2 d\tau + C\kappa^{2/3} (\mathcal{B}_{N_\nq}^2+\mathcal{B}_{N_0}^2)\mathcal{B}_{N_\nq}^2 .\label{N_T_2}
\end{align}
Here $C$ is a constant depending only on the regularity level  $s$.
\end{lem}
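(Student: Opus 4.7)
The plan is to handle $T_{N_\nq;21}$ (the $N_\nq\pa_y C_0$ term) and $T_{N_\nq;22}$ (the $N\na_L C_\nq$ term) by the same four-step scheme: (i) integrate by parts to distribute derivatives favorably, (ii) apply the product estimate \eqref{A_product_rule_Hs} to split Sobolev norms of the nonlinearity, (iii) use the elliptic estimate \eqref{Green_M_t_est} to convert $\na_L C=\na_L(1-\de_L)^{-1}N$ into a factor controlled by $\sqrt{-\pa_\tau M_\kappa/M_\kappa}$, and (iv) absorb half of the output into the available $\kappa$-dissipation $\kappa\|A_\kappa\sqrt{-\de_L}N_\nq\|_{L^2}^2$ using Young's inequality. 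The multiplier comparison $A_\nu\approx A_\kappa$ \eqref{A_nu_kappa_comparison} and the usual fact that $-\de_L\geq 1$ on $k\neq 0$ modes are used repeatedly without comment.

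For $T_{N_\nq;21}$, since $C_0$ depends only on $y$, the expression $\na_L\cdot(N_\nq\na_L C_0)$ reduces to $\pa_y^\tau(N_\nq\pa_y C_0)$. One integration by parts moves $\pa_y^\tau$ onto $A_\kappa N_\nq$, producing $\|A_\kappa\pa_y^\tau N_\nq\|_{L^2}\leq \|A_\kappa\sqrt{-\de_L}N_\nq\|_{L^2}$ paired with $\|A_\kappa(N_\nq\pa_y C_0)\|_{L^2}$. The product estimate bounds the latter by $C\|A_\kappa N_\nq\|_{L^2}\|\pa_y C_0\|_{H^s}\leq C\|A_\kappa N_\nq\|_{L^2}\|N_0\|_{H^s}$, using the trivial elliptic bound $\|\pa_y(1-\pa_{yy})^{-1}N_0\|_{H^s}\leq\|N_0\|_{H^s}$. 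Young's inequality then yields $\tfrac{\kappa}{16}\int\|A_\kappa\sqrt{-\de_L}N_\nq\|_{L^2}^2d\tau+C\kappa\int\|N_0\|_{H^s}^2\|A_\kappa N_\nq\|_{L^2}^2d\tau$, and the remaining time integral is controlled by combining $\|A_\kappa N_\nq\|^2\leq\|A_\kappa\sqrt{-\de_L}N_\nq\|^2$ with the bootstrap bounds \eqref{Hypothesis_cell_density_enhanced_dissipation}, \eqref{Hypothesis_N_0}.

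For $T_{N_\nq;22}$, one integration by parts moves $\na_L$ onto $A_\kappa N_\nq$. Splitting $N=N_0+N_\nq$ and applying \eqref{A_product_rule_Hs} gives $\|A_\kappa(N\na_L C_\nq)\|_{L^2}\leq C(\|N_0\|_{H^s}+\|A_\kappa N_\nq\|_{L^2})\|A_\kappa\na_L C_\nq\|_{L^2}$. The elliptic estimate \eqref{Green_M_t_est} converts $\|A_\kappa\na_L(1-\de_L)^{-1}N_\nq\|_{L^2}\leq C\|A_\kappa\sqrt{-\pa_\tau M_\kappa/M_\kappa}N_\nq\|_{L^2}$, a quantity that is square-integrable in time by \eqref{Hypothesis_cell_density_enhanced_dissipation}. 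Young's inequality produces a further $\tfrac{\kappa}{16}\int\|A_\kappa\sqrt{-\de_L}N_\nq\|_{L^2}^2d\tau$ plus $C\kappa(\|A_\kappa N_\nq\|_{L_t^\infty L^2}^2+\|N_0\|_{L_t^\infty H^s}^2)\|A_\kappa\sqrt{-\pa_\tau M_\kappa/M_\kappa}N_\nq\|_{L_t^2L^2}^2$, and the bootstrap bounds \eqref{Hypothesis_cell_density_enhanced_dissipation}, \eqref{Hypothesis_N_0} close the estimate.

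The main obstacle is that the chemotactic nonlinearity $\kappa\na_L\cdot(N\na_L C)$ is precisely the mechanism that drives the unperturbed Patlak-Keller-Segel blow-up, so the absorption scheme must be engineered so that every occurrence of $\na_L C$ is decoded into a quantity enjoying either the $\kappa$-dissipation (giving $\sqrt{-\de_L}N_\nq$) or the enhanced dissipation (giving $\sqrt{-\pa_\tau M_\kappa/M_\kappa}N_\nq$). The small prefactor $\kappa$ then cancels exactly against the factor $\kappa^{-1}$ implicit in the bootstrap bound $\kappa\int\|A_\kappa\sqrt{-\de_L}N_\nq\|_{L^2}^2 d\tau\leq 2\mathcal{B}_{N_\nq}^2$; the slight loss from $\kappa$ to the stated $\kappa^{2/3}$ comfortably absorbs the crude step $\|A_\kappa N_\nq\|_{L^2}\leq\|A_\kappa\sqrt{-\de_L}N_\nq\|_{L^2}$ applied on $k\neq 0$ modes.
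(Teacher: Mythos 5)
Your treatment of $T_{N_\nq;22}$ is essentially the same as the paper's: integration by parts, the product rule \eqref{A_product_rule_Hs}, the elliptic estimate \eqref{Green_M_t_est}, and Young's inequality, yielding $C\kappa(\mathcal{B}_{N_\nq}^2+\mathcal{B}_{N_0}^2)\mathcal{B}_{N_\nq}^2\le C\kappa^{2/3}(\cdots)\mathcal{B}_{N_\nq}^2$. That part is fine.

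There is a genuine gap in your estimate of $T_{N_\nq;21}$. After Young's inequality you are left with
\begin{align}
C\kappa\int_0^t\|N_0\|_{H^s}^2\,\|A_\kappa N_\nq\|_{L^2}^2\,d\tau
\le C\kappa\,\mathcal{B}_{N_0}^2\int_0^t\|A_\kappa N_\nq\|_{L^2}^2\,d\tau,
\end{align}
and you propose to close this with the crude pointwise bound $\|A_\kappa N_\nq\|_{L^2}\le\|A_\kappa\sqrt{-\de_L}N_\nq\|_{L^2}$ followed by the bootstrap hypothesis \eqref{Hypothesis_cell_density_enhanced_dissipation}. But that route gives only $\int_0^t\|A_\kappa\sqrt{-\de_L}N_\nq\|_{L^2}^2\,d\tau\le 2\kappa^{-1}\mathcal{B}_{N_\nq}^2$, so the $\kappa$ prefactor cancels completely and the final output is $C\mathcal{B}_{N_0}^2\mathcal{B}_{N_\nq}^2$ \emph{with no small factor at all}. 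The lemma claims $C\kappa^{2/3}(\mathcal{B}_{N_\nq}^2+\mathcal{B}_{N_0}^2)\mathcal{B}_{N_\nq}^2$, and that $\kappa^{2/3}$ is not cosmetic: in the proof of \eqref{Conclusion_cell_density_enhanced_dissipation} it is converted via $\kappa\le\ep$ into $\ep^{1/2}$-smallness, which is what allows the term to be absorbed into $\tfrac12\mathcal{B}_{N_\nq}^2$ through the choice \eqref{Choice_of_Const_3}. Since $\mathcal{B}_{N_0}\ge1$, the $\ep$-free bound $C\mathcal{B}_{N_0}^2\mathcal{B}_{N_\nq}^2$ can never be absorbed, and the bootstrap fails to close.

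The missing ingredient is the enhanced dissipation estimate \eqref{ED_M_Lt2Lzy2}, which is sharper than the pointwise $\sqrt{-\de_L}\ge1$ step precisely because it trades $\|A_\kappa N_\nq\|_{L^2}^2$ for a \emph{combination} of the $\sqrt{-\pa_\tau M_\kappa/M_\kappa}$ and $\kappa\sqrt{-\de_L}$ integrands with only a $\kappa^{-1/3}$ prefactor. This gives $\int_0^t\|A_\kappa N_\nq\|_{L^2}^2\,d\tau\le C\kappa^{-1/3}\mathcal{B}_{N_\nq}^2$, and then $C\kappa\cdot\kappa^{-1/3}\,\mathcal{B}_{N_0}^2\mathcal{B}_{N_\nq}^2=C\kappa^{2/3}\mathcal{B}_{N_0}^2\mathcal{B}_{N_\nq}^2$, matching the lemma. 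Your closing paragraph misjudges the bookkeeping: you describe the trade $\|A_\kappa N_\nq\|\le\|A_\kappa\sqrt{-\de_L}N_\nq\|$ as only a "slight loss from $\kappa$ to $\kappa^{2/3}$," when in fact it is a loss of the entire $\kappa$ factor. Replace that step with \eqref{ED_M_Lt2Lzy2} and the argument is correct.
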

Now we complete the proof of \eqref{Conclusion_cell_density_enhanced_dissipation}.
\begin{proof}[Proof of \eqref{Conclusion_cell_density_enhanced_dissipation}]
Combining the decomposition \eqref{N_T_1T_2},  Lemma \ref{lem:N_T_1},  Lemma \ref{lem:N_T_2} and the choice of parameters $0<\kappa\leq \ep=\frac{\kappa}{\nu}\leq 1$, we obtain
\begin{align}
\|A_\kappa &N_\nq(t)\|_{L^2}^2+  \left\|\sqrt{\frac{-\pa_ t M_\kappa}{  M_\kappa}}A_\kappa N_{\neq}\right\|_{L_t^2L^2}^2 + \kappa  \|A_\kappa\sqrt{-\de_L}N_{\neq}\|_{L^2_tL^2}^2\\
\leq &C\|N_{\text{in};\neq}\|_{H^s}^2+
C\ep^{1/2}( \mathcal{B}_{\om_\nq}^2+\mathcal{B}_{\om_0}^2+\mathcal{B}_{N_\nq}^2  +\mathcal{B}_{N_0} ^2 )\mathcal{B}_{N_\nq}+C\ep^{1/2} (\mathcal{B}_{N_\nq}^2+\mathcal{B}_{N_0}^2)\mathcal{B}_{N_\nq}^2. 
\end{align} 
Here $C$ is universal constant depending only on $s$.
The following choices of parameters yields \eqref{Conclusion_cell_density_enhanced_dissipation}:
\begin{align} \label{Choice_of_Const_3}
\mathcal{B}_{N_\nq}^2\geq 4C\|N_{ \mathrm{in};\nq}\|_{H^s}^2,\quad \ep \leq \frac{1}{16 C^2(\mathcal{B}_{\om_\nq}^2+\mathcal{B}_{\om_0}^2+\mathcal{B}_{N_\nq} ^2  + \mathcal{B}_{N_0}^2)^2}. 
\end{align}
\end{proof}
The remaining part of the subsection is devoted to the proof of Lemma \ref{lem:N_T_1} and Lemma \ref{lem:N_T_2}. 
\begin{proof}[Proof of Lemma \ref{lem:N_T_1}]
The estimate of  $T_{N_\nq;11}$ is similar to the estimate of $T_{\om_\nq;12}$ in \eqref{T_1_12}. Hence we will only sketch the estimate. First we note that by the velocity law and the null condition,
\begin{align}
\int \pa_y\de_y^{-1}\om_0\pa_z A_\kappa N_{\neq}\ A_\kappa N_{\neq}dV=\frac{1}{2}\int \pa_y\de_y^{-1}\om_0\ \pa_z \left(A_\kappa N_\nq\right)^2dV=0,
\end{align}
the $T_{N_\nq;11}$-term can be  rewritten as follows
\begin{align}
T_{N_\nq;11}=&\bigg|\int_0^t\int \left( -A_\kappa(\pa_y\de_y^{-1}\om_0 \ \pa_zN_{\neq})+ \pa_y\de_y^{-1}\om_0\ \pa_z A_\kappa N_{\neq}\right)\ A_\kappa N_{\neq}dVd\tau\bigg|\\
 =&C\bigg| \sum_{k\neq0}\int_0^t\iint(M_\kappa(\tau,k,\eta)\lan k,\eta\ran^s-M_{\kappa}(\tau,k, \xi)\lan k,\xi\ran^s)\frac{i(\eta-\xi)}{ (\eta-\xi)^2}\wh \om(\tau,0,\eta-\xi)\\
&\times (ike^{\delta \kappa^{1/3}|k|^{2/3}\tau} \wh N(\tau,k,\xi)) \  \overline{A_\kappa \wh{N}}(\tau,k,\eta)d\xi d\eta d\tau\bigg|.
\end{align}
\ifx 
Application of the commutator estimate \eqref{commutator_estimate} yields that
\begin{align}
|\mathcal{T}_{1;0\neq}|\leq C\sum_{k\neq0}\int_0^t \iint \bigg((1+k^2+\xi^2)^{\frac{s}{2}}+(1+k^2+\eta^2)^{\frac{s}{2}}\bigg)\big |\wh \om(0,\eta-\xi)\big| \big|e^{\delta \kappa^{1/3}|k|^{2/3}\tau}\wh N(k,\xi)\big|
\big|A_\kappa(k,\eta)\overline{\wh{N}}(k,\eta)\big|d\xi d\eta d\tau. 
\end{align}
Now by the $M_\kappa$-estimates \eqref{M_bound}, \eqref{ED_M_Lt2Lzy2}, the product estimate of the multiplier $M_\kappa$ \eqref{M_product_rule_Hs},
\fi Now we observe that this is in the same form as  \eqref{commutator_form}. Hence combining the application of an identical argument as in \eqref{T_om_nq_12}, the enhanced dissipation relation \eqref{ED_M_Lt2Lzy2} and the bootstrap hypotheses \eqref{Hypothesis_cell_density_enhanced_dissipation}, \eqref{Hypothesis_Omega_0} yields that
\begin{align}
 T_{N_\nq;11}\leq C\|A_\nu \om_0 \|_{L^\infty_t L^2}\|A_\kappa N_{\neq}\|_{L_t^2L^2}^2\leq C\mathcal{B}_{\om_0}\mathcal{B}_{N_\nq}^2\ep^{1/2}\kappa^{1 / 6}.
\end{align}
We note that this is consistent with \eqref{N_T_1}.

\myb{
For the $ T_{N_\nq;12}$ term in \eqref{N_T_1T_2}, we apply integration by parts, the Biot-Savart law, and H\"older inequality to obtain the following:
\begin{align}
T_{N_\nq;1 2}=&\bigg|\int_0^t \int \na_L A_\kappa N_\nq \cdot   A_\kappa(\na_L^\perp \de_L^{-1}\om_\nq N ) dVd\tau \bigg| 
\leq  C\|A_\kappa \sqrt{-\de_L}  N_\nq\|_{L_t^2 L^2} \| A_\kappa(\na_L^\perp \de_L^{-1}\om_\nq N)\|_{L_t^2 L^2}.
\end{align}
Now we invoke the product estimate \eqref{A_product_rule_Hs}, and then the $M_\kappa$-estimate \eqref{M_bound} and the elliptic estimate  \eqref{Green_M_t_est} to derive the  following bound
\begin{align}T_{N_\nq;12 }\leq &\|A_\kappa \sqrt{-\de_L}N_\nq\|_{L_t^2L^2}\|A_\nu \na_L^\perp\de_L^{-1}\om_\nq\|_{L_t^2L^2}\|A_\kappa N\|_{L_t^\infty L^2}\\
\leq &\|A_\kappa \sqrt{-\de_L}N_\nq\|_{L_t^2L^2}\left\|A_\nu \sqrt{\frac{-\pa_t M_\nu}{M_\nu}}\om_\nq\right\|_{L_t^2L^2}(\|A_\kappa N_0\|_{L_t^\infty L^2}+\|A_\kappa N_\nq\|_{L_t^\infty L^2}).
\end{align}
Now the bootstrap hypotheses \eqref{Hypothesis_cell_density_enhanced_dissipation}, \eqref{Hypothesis_N_0}, \eqref{Hypothesis_omega_enhanced_dissipation} yields
\begin{align}
 T_{N_\nq ;12}\leq &C\kappa^{-1/2}\mathcal{B}_{N_\nq}\mathcal{B}_{\om_\nq}\ep{\nu^{1/2}}( \mathcal{B}_{N_0}+{\mathcal{B}_{N_\nq}})
\leq C\mathcal{B}_{N_\nq}(\mathcal{B}_{\om_\nq}^2+\mathcal{B}_{N_0}^2+{\mathcal{B}_{N_\nq}^2})\sqrt\ep.
\end{align}} 
Combining the above estimates of $T_{N_\nq;11}$ and $T_{N_\nq;12}$ yields the result \eqref{N_T_1}.
\end{proof}
 
\begin{proof}[Proof of Lemma \ref{lem:N_T_2}]
First we estimate the ${T}_{N_\nq;21}$-term  with integration by parts and the product estimate \eqref{A_product_rule_Hs} as follows:
\begin{align}
{T}_{N_\nq;21}=&\bigg|\kappa \int_0^t\int A_\kappa \na_L N_\nq\cdot A_\kappa (N_\nq \pa_y C_0) dVd\tau\bigg|
\leq{\frac{\kappa}{16} \|A_\kappa \sqrt{-\de_L}N_\nq\|_{L^2_ t L^2}^2} + C\kappa  \|A_\kappa N_\nq \|_{L_t^2L^2}^2\|A_\kappa\pa_y C_0\|_{L_t^\infty L^2}^2.
\end{align}
Now we invoke the chemical gradient estimate \eqref{pa_yC_0_est_Hs}, the enhanced dissipation relation \eqref{ED_M_Lt2Lzy2} and the hypotheses \eqref{Hypothesis_cell_density_enhanced_dissipation}, \eqref{Hypothesis_N_0} to get
\begin{align}
{T}_{N_\nq;21}\leq \frac{\kappa}{16} \int_0^t\|A_\kappa \sqrt{-\de_L}N_\nq\|_2^2 d\tau + C\kappa^{2/3} \mathcal{B}_{N_\nq}^2\mathcal{B}_{N_0}^2.
\end{align}
To estimate the ${T}_{N_\nq;22}$-term, we use the integration by parts and the product estimate \eqref{A_product_rule_Hs}  to obtain that 
\begin{align}
{T}_{N_\nq;22}=\bigg|\kappa \int_0^t\int A_\kappa \na_L N_\nq\cdot A_\kappa (N \na_L C_\nq) dVd\tau\bigg|
\leq \frac{\kappa}{16} \|A_\kappa \sqrt{-\de_L}N_\nq\|_{ L^2_t L^2}^2  + C\kappa  \|A_\kappa N  \|_{L_t^\infty L^2}^2\|A_\kappa\na_L C_\nq\|_{L^2_t L^2}^2.
\end{align}
We use the $M_\kappa$-estimate \eqref{M_bound}, \eqref{M_property_common_dot_M} and the hypotheses \eqref{Hypothesis_cell_density_enhanced_dissipation}, \eqref{Hypothesis_N_0} to obtain the following
\begin{align}
{T}_{N_\nq;22}\leq &\frac{\kappa}{16} \|A_\kappa \sqrt{-\de_L}N_\nq\|_{ L^2_t L^2}^2   + C\kappa  \|A_\kappa N  \|_{L_t^\infty L^2}^2\|A_\kappa\na_L(1-\de_L)^{-1} N_\nq\|_{L^2_t L^2}^2\\
\leq &\frac{\kappa}{16} \|A_\kappa \sqrt{-\de_L}N_\nq\|_{ L^2_t L^2}^2   + C\kappa  \|A_\kappa N  \|_{L_t^\infty L^2}^2\left\|A_\kappa\sqrt{\frac{-\pa_t M_\kappa}{M_\kappa}} N_\nq\right\|_{L^2_t L^2}^2\\
\leq &\frac{\kappa}{16} \|A_\kappa \sqrt{-\de_L}N_\nq\|_{ L^2_t L^2}^2   + C\kappa (\mathcal{B}_{N_\nq}^2+\mathcal{B}_{N_0}^2)\mathcal{B}_{N_\nq}^2 .
\end{align}
Combining the above estimates of $T_{N_\nq;21}$ and $T_{N_\nq;22}$ yields the result \eqref{N_T_2}.
\end{proof}
\subsection{The $z$-average of Cell Density}\label{Sect:Cell_2}
In this section, we prove \eqref{Conclusion_N_0}. The strategy we adopt is to derive an estimate of the $L^2$-norm of $N_0$, and inductively derive higher Sobolev norm bound. 

First we write down the time evolution of the $L^2$ and $H^s$ energy. We consider multiplier $\mathfrak{M}\in\{1,\,\lan\pa_y\ran,...,\lan \pa_y\ran^s\}$. Recalling the equation \eqref{N_0}, we have that 
\begin{align}
\frac{1}{2}\frac{d}{dt}\|\mathfrak{M} N_0\|_2^2=&-\kappa\|\pa_y \mathfrak{M}  N_0\|_2^2-\kappa\int  \mathfrak{M} (\na_L\cdot(N\na_L C))_0\ \mathfrak{M}  N_0dV-\int \mathfrak{M} \left(\na_L\cdot(N\na_L^\perp \de_L^{-1} \om)\right)_0  \ \mathfrak{M}  N_0 dV\\
=&-\kappa\|\pa_y \mathfrak{M}  N_0\|_2^2-\kappa\int  \mathfrak{M} (\pa_y(N_0\pa_y C_0))\ \mathfrak{M}  N_0dV\\
&-\kappa\int  \mathfrak{M} (\na_L\cdot(N_{\neq}\na_LC_{\neq}))_0\ \mathfrak{M} N_0 dV-\int  \mathfrak{M}\left(\na_L\cdot(N\na_L^\perp \de_L^{-1} \om)\right)_0\ \mathfrak{M} N_0 dV.
\end{align}
Next we observe the following relations
\begin{align}
(\na_L\cdot (N_\nq\na_L C_\nq))_0=&(\pa_y^t (N_\nq \pa_y^t C_\nq))_0=\pa_y( N_\nq \pa_y^t C_\nq)_0,\\
\left(\na_L\cdot (N \na_L^\perp\de_L^{-1}\Omega)\right)_0 =& ( \pa_y^t (N\pa_z \de_L^{-1}\om)) _0= \pa_y\left(N_{\neq} \pa_z\de_L^{-1}\Omega_{\neq}\right)_0.
\end{align}
With these, we can rewrite the time evolution of $\|\mathfrak{M}N_0\|_2^2$ as follows
\begin{align}
\frac{1}{2}\frac{d}{dt}\|\mathfrak{M} N_0\|_2^2=&-\kappa\|\pa_y \mathfrak{M} N_0\|_2^2+\kappa\int \mathfrak{M}  \pa_y N_0\  \mathfrak{M} (N_0\pa_y C_0)dV\\
&+\kappa\int \mathfrak{M}\pa_y  N_0\  \mathfrak{M}(N_\nq \pa_y ^t C_\nq)_0dV +\int  \mathfrak{M}\pa_y N_0 \  \mathfrak{M}\left(N_{\neq} \pa_z\de_L^{-1}\Omega_{\neq}\right)_0dV\\
=:&-\kappa\|\pa_y \mathfrak{M} N_0\|_2^2+T_{N_0;0}+T_{N_0;1}+T_{N_0;2}\label{Z_NZ_C_NZ_om} 
\end{align}
The remaining part of the proof is subdivided into several lemmas. 
The first lemma provides estimates for the contributions from the non-zero modes.
\begin{lem}\label{lem:NZ_C_om_est}
There exists a constant $C$, which only depends on the regularity level $s$, such that the following estimates hold
\begin{align}
\int_0^t\|( N_\nq \pa_y^\tau C_\nq)_0\|_{H^s_y}^2d\tau \leq& C \mathcal{B}_{N_\nq}^4 ;\label{NZ_C_est}\\
\int_0^t\|(N_\nq \pa_z\de_L^{-1}\om_\nq)_0\|_{H_y^s}^2d\tau\leq& C\mathcal{B}_{N_\nq}^2\mathcal{B}_{\om_\nq}^2\ep^2\nu.\label{NZ_om_est} 
\end{align}
\end{lem}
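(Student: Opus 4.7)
The approach is to first observe a crucial simplification at $k=0$: from the definitions \eqref{W_kappa}, \eqref{W}, \eqref{M_N_Omega}, \eqref{A_N_Omega}, one checks $A_\kappa(t,0,\eta)=\pi^2\lan\eta\ran^s$, so that for any function $f(z,y)$ we have the Plancherel-type comparison
\begin{align*}
\|f_0\|_{H^s_y}\leq C\|A_\kappa f\|_{L^2},
\end{align*}
since the projection to $k=0$ can only decrease the $L^2_{z,y}$ norm. This reduces both bounds to estimating $A_\kappa$-weighted $L^2_{z,y}$ norms of the products $N_\nq\,\pa_y^\tau C_\nq$ and $N_\nq\,\pa_z\de_L^{-1}\om_\nq$, which I would attack by combining the $A_\kappa$ product rule \eqref{A_product_rule_Hs} with the elliptic estimate \eqref{Green_M_t_est}.

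For \eqref{NZ_C_est}, after the above reduction and the product rule, and noting that $\pa_y^\tau C_\nq$ is a component of $\na_L C_\nq$, I would obtain
\begin{align*}
\int_0^t\|(N_\nq\,\pa_y^\tau C_\nq)_0\|_{H^s_y}^2\,d\tau \leq C\|A_\kappa N_\nq\|_{L^\infty_t L^2}^2 \int_0^t\|A_\kappa \na_L C_\nq\|_{L^2}^2\,d\tau.
\end{align*}
Using $C_\nq=(1-\de_L)^{-1}N_\nq$, the elliptic estimate \eqref{Green_M_t_est} bounds $\|A_\kappa\na_L C_\nq\|_{L^2}$ by $C\|A_\kappa\sqrt{-\pa_\tau M_\kappa/M_\kappa}\,N_\nq\|_{L^2}$. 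The bootstrap hypothesis \eqref{Hypothesis_cell_density_enhanced_dissipation} then controls both factors by $\mathcal{B}_{N_\nq}$, yielding the desired $C\mathcal{B}_{N_\nq}^4$ bound.

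For \eqref{NZ_om_est}, I would combine the same reduction with the equivalence $A_\kappa\approx A_\nu$ from \eqref{A_nu_kappa_comparison} to handle the mixed-multiplier product, obtaining
\begin{align*}
\int_0^t\|(N_\nq\,\pa_z\de_L^{-1}\om_\nq)_0\|_{H^s_y}^2\,d\tau \leq C\|A_\kappa N_\nq\|_{L^\infty_t L^2}^2 \int_0^t\|A_\nu\na_L\de_L^{-1}\om_\nq\|_{L^2}^2\,d\tau.
\end{align*}
The elliptic estimate \eqref{Green_M_t_est} reduces the second integrand to $C\|A_\nu\sqrt{-\pa_\tau M_\nu/M_\nu}\,\om_\nq\|_{L^2}^2$, which the vorticity hypothesis \eqref{Hypothesis_omega_enhanced_dissipation} bounds by $C\mathcal{B}_{\om_\nq}^2\ep^2\nu$, while the prefactor is bounded by $C\mathcal{B}_{N_\nq}^2$ via \eqref{Hypothesis_cell_density_enhanced_dissipation}.

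The main subtlety, and what I expect to be the only real point to watch, is bookkeeping for the mixed $A_\kappa/A_\nu$ weights in the second estimate: the product rule \eqref{A_product_rule_Hs} is formulated for a single multiplier, so one must invoke the equivalence \eqref{A_nu_kappa_comparison} to place everything under $A_\kappa$ (or $A_\nu$) before applying it. Since this costs only a universal constant, no substantive obstacle beyond careful constant tracking is anticipated; in particular no commutator, no enhanced-dissipation exponent balancing, and no use of the $s$-regularity beyond the Sobolev embedding implicit in \eqref{A_product_rule_Hs} enter the argument.
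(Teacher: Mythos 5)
Your proof is correct and follows essentially the same route as the paper's: both reduce the $H^s_y$-norm of the $z$-average to an $A_\kappa$-weighted $L^2_{z,y}$-norm of the full product (you do this directly via $A_\kappa(t,0,\eta)=\pi^2\lan\eta\ran^s$, while the paper invokes Lemma \ref{lem:F_0 and F} together with the lower bound $A_\kappa\gtrsim\lan\pa_z,\pa_y\ran^s$ — the same Plancherel observation), then apply the product rule \eqref{A_product_rule_Hs}, the elliptic estimate \eqref{Green_M_t_est}, the equivalence \eqref{A_nu_kappa_comparison} for the mixed $N_\nq$–$\om_\nq$ term, and the bootstrap hypotheses \eqref{Hypothesis_cell_density_enhanced_dissipation}, \eqref{Hypothesis_omega_enhanced_dissipation}. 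Your remark that the $A_\kappa/A_\nu$ mismatch is the only bookkeeping issue, costing just a universal constant, correctly anticipates the one nonobvious step in the paper's argument.
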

\begin{proof}
To prove \eqref{NZ_C_est}, we use Lemma \ref{lem:F_0 and F}, the $M_\kappa$-multiplier bound \eqref{M_bound}, the definition of $A_\kappa$ \eqref{A_N_Omega}, the product estimate \eqref{A_product_rule_Hs}, the elliptic estimate \eqref{Green_M_t_est}, and the bootstrap hypothesis \eqref{Hypothesis_cell_density_enhanced_dissipation} as follows 
\begin{align}\int_0^t\|( N_\nq \pa_y^\tau C_\nq)_0\| _{H^s_y}^2d\tau\leq&C\|\lan \pa_z,\pa_y\ran^s (\pa_y^t C_{\neq} N_{\neq})\|_{L_t^2L^2}^2\leq  C\|A_\kappa (\pa_y^t C_{\neq} N_{\neq}) \|_{L_t^2L^2}^2\nb\\
\leq&  C\norm{A_\kappa \pa_y^t \de_L^{-1}  N_{\neq}}_{L_t^2 L^2}^2\|A_\kappa N_{\neq}\|_{L_t^\infty L^2}^2\\
\leq& C\norm{A_\kappa \sqrt{\frac{-\pa_tM_\kappa}{M_\kappa}}  N_{\neq} }_{L_t^2 L^2}^2\|A_\kappa N_{\neq}\|_{L_t^\infty L^2}^2\leq C \mathcal{B}_{N_\nq}^4.
 \label{NZ_C}
\end{align} 
This concludes the proof of \eqref{NZ_C_est}. Next we prove the estimate \eqref{NZ_om_est}. The idea is identical to the proof \eqref{NZ_C}. The adjustment is that we apply the fact $A_\kappa\approx A_\nu $ \eqref{A_nu_kappa_comparison}, the bootstrap hypotheses \eqref{Hypothesis_cell_density_enhanced_dissipation} and \eqref{Hypothesis_omega_enhanced_dissipation} during the estimate  
\begin{align}
\int_0^t\|(N_\nq \pa_z\de_L^{-1}\om_\nq)_0\|_{H_y^s}^2d\tau\leq &C\|\lan \pa_z,\pa_y\ran^s (N_\nq\pa_z\de_L^{-1}\om_\nq)\|_{L_t^2L^2}^2\leq C\|A_\kappa(N_\nq\pa_z\de_L^{-1}\om_\nq)\|_{L_t^2L^2}^2\\
\leq& C  \|A_\kappa N_\nq\|_{L_t^2L^2}^2\|A_\kappa\pa_z\de_L^{-1}\om_\nq\|_{L_t^\infty L^2}^2
\leq C  \|A_\kappa N_\nq\|_{L_t^\infty L^2}^2\|A_\nu\pa_z\de_L^{-1}\om_\nq\|_{L_t^2 L^2}^2\\
\leq&C\|A_\kappa N_{\neq}\|_{L_t^\infty L^2}^2 \norm{A_\nu \sqrt{\frac{-\pa_tM_\nu}{M_\nu}}  \om_{\neq} }_{L_t^2 L^2}^2\leq C \mathcal{B}_{N_\nq}^2 \mathcal{B}_{\om_\nq}^2\ep^2\nu.
\end{align}
This concludes the proof of \eqref{NZ_om_est}.

\end{proof}

Next, we prove the following $L^2$ estimate for $N_0$.
\begin{lem}\label{Lem:N_0_L2_estimate} If the hypotheses \eqref{Hypothesis_cell_density_enhanced_dissipation}, \eqref{Hypothesis_N_0} hold on $[0,T_{\star}]$, then the solution $N_0$ is bounded uniformly in $L^2$
\begin{align}\label{N_0_L_2}
\|N_0\|_{L_y^2(\rr)}\leq C_{L^2}:=\|N_{\mathrm{in};0}\|_2+CM^{3/2}+C(\mathcal{B}_{N_\nq}^2+\mathcal{B}_{N_\nq}\mathcal{B}_{\om_\nq} \ep^{1/2}), \quad \forall t\in [0,T_\star].
\end{align}
{Here $M=\|n\|_{L^1}=|\Torus|\int_{\rr}N_{\mathrm{in};0}(y)dy$ is the conserved total mass of the cell density.}
\end{lem}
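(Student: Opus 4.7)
The plan is to integrate the energy identity \eqref{Z_NZ_C_NZ_om} with the multiplier $\mathfrak{M}=1$, control the three source terms $T_{N_0;0}, T_{N_0;1}, T_{N_0;2}$, and then upgrade the resulting estimate to a uniform-in-time bound by exploiting the 1D subcriticality of the PKS nonlinearity via a Nash-type inequality. For the ``diagonal'' PKS contribution, I would integrate by parts using $(1-\pa_{yy})C_0 = N_0$ to write
\begin{align*}
T_{N_0;0} \;=\; \kappa\int\pa_y N_0\,N_0\,\pa_y C_0\,dy \;=\; -\frac{\kappa}{2}\int N_0^2\,\pa_{yy}C_0\,dy \;=\; -\frac{\kappa}{2}\int N_0^2 C_0\,dy + \frac{\kappa}{2}\int N_0^3\,dy.
\end{align*}
Since $N_0, C_0\geq 0$ (by positivity preservation of $N$ and the positivity of the Green's function of $1-\pa_{yy}$), the first summand is non-positive and may be dropped. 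The cubic residue is then handled by the 1D Gagliardo--Nirenberg inequality $\|N_0\|_{L^3}^3 \leq C M^{5/3}\|\pa_y N_0\|_{L^2}^{4/3}$, with $M = \|N_0\|_{L^1}$ conserved, and Young's inequality (exponents $3,3/2$) to absorb a fraction of the $\kappa\|\pa_y N_0\|_{L^2}^2$ dissipation, leaving a constant forcing of size $C\kappa M^5$.

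The remaining terms $T_{N_0;1}$ and $T_{N_0;2}$ are treated by Cauchy--Schwarz in $y$ followed by Young's inequality, absorbing an additional small multiple of $\kappa\|\pa_y N_0\|_{L^2}^2$ and producing sources $C\kappa\|(N_\nq\pa_y^\tau C_\nq)_0\|_{L_y^2}^2$ and $C\kappa^{-1}\|(N_\nq\pa_z\de_L^{-1}\om_\nq)_0\|_{L_y^2}^2$ whose time integrals are controlled directly by \eqref{NZ_C_est} and \eqref{NZ_om_est}. Using the relation $\kappa = \ep\nu$, their integrated contribution is bounded by $G := C\kappa\mathcal{B}_{N_\nq}^4 + C\mathcal{B}_{N_\nq}^2\mathcal{B}_{\om_\nq}^2\ep$. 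Setting $y(t) := \|N_0(t)\|_{L^2}^2$, the combined estimates produce the differential inequality
\begin{align*}
y'(t) + c\kappa\|\pa_y N_0(t)\|_{L^2}^2 \;\leq\; C\kappa M^5 + g(t), \qquad \int_0^t g(\tau)\,d\tau \leq G.
\end{align*}

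The main obstacle is to convert this into a uniform-in-time bound, since the constant source $C\kappa M^5$ is not integrable in time. The key is the 1D Nash inequality $\|\pa_y f\|_{L^2}^2 \geq c\,\|f\|_{L^2}^6/\|f\|_{L^1}^4$, which upgrades the linear dissipation into the nonlinear cubic dissipation $c\kappa y^3/M^4$ and yields
\begin{align*}
y'(t) + \frac{c\kappa}{M^4}\,y(t)^3 \;\leq\; C\kappa M^5 + g(t).
\end{align*}
Now choose the threshold $Y_\star := (2CM^9/c)^{1/3} \simeq C'M^3$, so that on $\{y > Y_\star\}$ the nonlinear dissipation strictly dominates the constant forcing and $y'\leq g$ pointwise. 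A comparison argument applied to $z(t) := (y(t)-Y_\star)^+$ gives $z(t) \leq z(0) + \int_0^t g\,d\tau$, hence $y(t) \leq \max(y(0), Y_\star) + G$ for all $t\in[0, T_\star]$. Taking square roots, and using $\kappa\leq 1$ to bound $\kappa^{1/2}\mathcal{B}_{N_\nq}^2$ by $\mathcal{B}_{N_\nq}^2$, yields the claimed estimate $\|N_0(t)\|_{L^2}\leq \|N_{\mathrm{in};0}\|_{L^2} + CM^{3/2} + C(\mathcal{B}_{N_\nq}^2 + \mathcal{B}_{N_\nq}\mathcal{B}_{\om_\nq}\ep^{1/2})$.
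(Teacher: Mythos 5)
Your proposal is correct, and it takes a genuinely different route for the diagonal chemotaxis term $T_{N_0;0}$. The paper controls $T_{N_0;0}$ directly, via the pointwise bound $\|\pa_y C_0\|_{L^\infty}\leq \|N_0\|_{L^1}=M/(2\pi)$ (the paper's chemical gradient estimate), so that $T_{N_0;0}\leq \tfrac14\kappa\|\pa_y N_0\|_2^2+C\kappa M^2\|N_0\|_2^2$, producing a source that is \emph{quadratic in the unknown}. You instead integrate by parts and invoke the elliptic relation $\pa_{yy}C_0=C_0-N_0$, drop the nonpositive entropy-type term $-\tfrac{\kappa}{2}\int N_0^2C_0$ (using $N_0,C_0\geq 0$, the latter because the Green's function of $1-\pa_{yy}$ on $\rr$ is positive), and then handle the residual $\tfrac{\kappa}{2}\int N_0^3$ with Gagliardo--Nirenberg/Young against the dissipation, producing a \emph{constant} source $C\kappa M^5$. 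Both yield, after the Nash-inequality upgrade $\|\pa_y N_0\|_2^2\gtrsim \|N_0\|_2^6/M^4$, an ODE whose dissipation dominates the source once $\|N_0\|_2^2\gtrsim M^3$, so the thresholds match. The remaining pieces coincide with the paper: Cauchy--Schwarz/Young on $T_{N_0;1},T_{N_0;2}$ with Lemma 4.1 giving the integrable forcing $G$, and a comparison argument (you use $(y-Y_\star)^+$, the paper shifts by $G_{L^2}(t)$; these are equivalent). Your version has the minor advantage of avoiding the $M^2 y$ term in the ODE at the cost of an extra GN step and an extra appeal to positivity of $C_0$; the paper's is slightly more elementary since it only needs the $L^\infty$ bound on $\pa_y C_0$.
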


\begin{proof}
Before estimating the norm $\|N_0\|_2$, we collect the $L^1(\rr)$ bound of $N_0$. Since the solution $N(t,z,y)$ to the equation \eqref{PKS_NS_zy_coordinate} is \myb{positive by Theorem \ref{thm:local_existence},} we have that the function $N_0(y)=\frac{1}{2\pi}\int_{\mathbb{T}}N(t,z,y)dz$ is positive. Moreover, the total integral of $N_0$ is preserved due to the divergence form of the equation (\ref{N_0}),
\be
\int N_0(t)dy=\int N_0(0,y)dy,
\ee
which, together with the positivity of $N_0$, implies that
\begin{align}\label{N_0 L1}
\|N_0(t)\|_{L^1(\rr)}\equiv \frac{M}{2\pi},\quad \forall t\in[0,T_\star].
\end{align}


Next we estimate each component in $\eqref{Z_NZ_C_NZ_om}_{\mathfrak{M}=1}$, which describes the time evolution of $\|N_0(t)\|_2^2$.  
Combining the information \eqref{N_0 L1}, \eqref{pa_yC_0_est_Linf}, the $T_{N_0;0}$-term in \eqref{Z_NZ_C_NZ_om} can be  estimated as follows
\begin{align}
T_{N_0;0}\leq &\kappa\int |\pa_y N_0|\ |\pa_y C_0N_0|dy
\leq \frac{1}{4 }\kappa\|\pa_y N_0\|_2^2+ \kappa\|\pa_y C_0\|_{\infty}^2\|N_0\|_2^2 
\leq \frac{1}{4}\kappa\|\pa_y N_0\|_2^2+ C\kappa M^2\|N_0\|_{2}^2.\label{Z}
\end{align}
The $ T_{N_0;1},\,T_{N_0;2}$ terms can be estimated as follows
\begin{align}
T_{N_0;1}(t)+T_{N_0;2}(t)\leq& \frac{1}{2}\kappa\|\pa_y N_0\|_{L_y^2}^2(t)+C\left(\kappa \|N_\nq \pa_y^t C_\nq\|_{L_{z,y}^2}^2(t)+\frac{1}{\kappa}\|N_\nq \pa_z \de_L^{-1}\om_\nq\|_{L_{z,y}^2}^2(t)\right)\\
=:&\frac{1}{2}\kappa\|\pa_y N_0\|_{L_y^2}^2(t)+\frac{d}{dt} G_{L^2}(t),\quad G_{L^2}(0)=0.
\end{align}
By Lemma \ref{lem:NZ_C_om_est},
\begin{align} 
G_{L^2}(t)\leq C \mathcal{B}_{N_\nq}^4+C \mathcal{B}_{N_\nq}^2\mathcal{B}_{\om_\nq}^2\ep,\quad \forall t\in[0, T_\star].\label{G_L^2_bound}
\end{align}
Combining the above estimates \eqref{Z}, \eqref{G_L^2_bound} and the relation \eqref{Z_NZ_C_NZ_om}, we have
\begin{align}\label{energy estimate zero mode}
\frac{1}{2}\frac{d}{dt}\int_\rr N_0 ^2dy\leq-\frac{1}{4}\kappa\|\pa_y N_0\|_2^2+C\kappa M^2\|N_0\|_{2}^2+\frac{d}{dt}G_{L^2}(t).
\end{align} 
Now we try to get an estimate on the $L^2$ norm from (\ref{energy estimate zero mode}). Applying the following Nash inequality
\be
\|f\|_{L^2(\rr)}^{3}\leq C  \|f\|_{L^1(\rr)}^{2}\|\pay f\|_{L^2(\rr)}
\ee
yields
\be
-\|\pa_yf\|_{2}^2\leq - \frac{\|f\|_{2}^{6}}{C\|f\|_{1}^{4}}.
\ee
Combining this with the estimate \eqref{energy estimate zero mode}, we have that
\begin{align}
 \frac{d}{dt}\left(\| N_0\|_2^2-{G_{L^2}(t)}\right)\leq&-\frac{1}{2 C}\kappa\|\pa_y N_0\|_2^2+\kappa C M^2\|N_0\|_{2}^2 
\leq-  \kappa\frac{\|N_0\|_{2}^{6}}{CM^{4}}+\kappa  CM^2\|N_0\|_{2}^2 \\
\leq&-\frac{1}{CM^4}\kappa\|N_0\|_{2}^{2}\bigg(\|N_0\|_{2}^{4}-CM^6\bigg)\\
\leq &-\frac{1}{CM^4}\kappa\|N_0\|_{2}^{2}\bigg(\|N_0\|_{2}^{2}-{G_{L^2}(t)}-CM^3\bigg)\bigg(\|N_0\|_{2}^{2}+CM^3\bigg) .\label{N_0L2_est}
\end{align}
We can see that the $\|N_0\|_2$ is bounded uniformly in $[0,T_\star]$ in the sense that
\begin{align}
\|N_0(t)\|_2\leq C_{L^2}=\|N_{\mathrm{in};0}\|_2+CM^{3/2}+C(\mathcal{B}_{N_\nq}^2+\mathcal{B}_{N_\nq}\mathcal{B}_{\om_\nq} \ep^{1/2}),\quad \forall t\in [0,T_\star],
\end{align}
which is \eqref{N_0_L_2}.
\end{proof}

Next we try to use the information on $\|N_0\|_2$ to get the bound on higher $H^s$ norm.

\begin{lem}
Assuming the hypotheses \eqref{Hypothesis_cell_density_enhanced_dissipation}, \eqref{Hypothesis_omega_enhanced_dissipation} hold on $[0,T_\star]$, then the $H^s$ norm of the solution $N_0$ to \eqref{N_0} is bounded:
\begin{align}
\|N_0(t)\|_{H^s}\leq C_{H^s} (\|N_{\mathrm{in}}\|_{H^s},M, \mathcal{B}_{N_\nq},\sqrt{\ep}\mathcal{B}_{\om_\nq}) , \quad\forall t\in [0,T_\star].
\end{align}
{Here $M=\|n\|_{L^1}=|\Torus|\int_{\rr}N_{\mathrm{in};0}(y)dy$ is the conserved total mass of the cell density.} 
Moreover, if 
\begin{align}
\ep\leq \frac{1}{\mathcal{B}_{\om_\nq}^2},\label{Choice_of_Const_4}
\end{align}
then
\begin{align}
\|N_0\|_{H^s}\leq C_{H^s}(\|N_{\mathrm{in}}\|_{H^s},M, \mathcal{B}_{N_\nq}).\label{N_0_H_s_est}
\end{align}
\end{lem}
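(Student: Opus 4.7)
The plan is to bootstrap inductively from the $L^2$ bound of Lemma \ref{Lem:N_0_L2_estimate} up through the $H^k$ bounds for $k=1,\ldots,s$. For each $k$, I apply the identity \eqref{Z_NZ_C_NZ_om} with $\mathfrak{M}=\lan\pa_y\ran^k$ and close a differential inequality for $E_k(t):=\|\lan\pa_y\ran^k N_0(t)\|_{L^2}^2$. The two pillars of the argument are the parabolic dissipation $-\kappa\|\lan\pa_y\ran^{k+1}N_0\|_{L^2}^2$ and the Gagliardo--Nirenberg coercivity
\begin{align*}
E_k^{(k+1)/k}\leq C\,C_{L^2}^{2/k}\,\|\lan\pa_y\ran^{k+1}N_0\|_{L^2}^2,
\end{align*}
which converts the dissipation into a super-linear sink in $E_k$ by virtue of the already-established $L^2$ bound.

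For the self-coupling term $T_{N_0;0}$ I would use the tame product estimate together with the elliptic identity $(1-\pa_{yy})C_0=N_0$, which gives $\|\lan\pa_y\ran^{k+1}C_0\|_{L^2}\leq \|N_0\|_{H^{k-1}}$, the pointwise bound $\|\pa_y C_0\|_\infty\leq CM$ from \eqref{pa_yC_0_est_Linf}, and the one-dimensional Sobolev embedding $\|N_0\|_\infty\leq C\|N_0\|_{L^2}^{1/2}\|\pa_y N_0\|_{L^2}^{1/2}$. A Cauchy--Schwarz and Young absorption then yields
\begin{align*}
|T_{N_0;0}|\leq \tfrac{\kappa}{4}\|\lan\pa_y\ran^{k+1}N_0\|_{L^2}^2 + C\kappa\bigl((M^2+C_{L^2}^2)\,E_k + \|N_0\|_{H^1}^2\,\|N_0\|_{H^{k-1}}^2\bigr).
\end{align*}
For the remainder contributions $T_{N_0;1}$ and $T_{N_0;2}$ I split via Cauchy--Schwarz with weights $\kappa$ and $\kappa^{-1}$ respectively, exactly mimicking the proof of Lemma \ref{Lem:N_0_L2_estimate}. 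The scaling $\kappa^{-1}\cdot\ep^2\nu=\ep$ from $\kappa=\ep\nu$, combined with Lemma \ref{lem:NZ_C_om_est} applied at the $H^k_y$ level, produces an increasing function $G_k(t)$ with $G_k(0)=0$, $G_k(t)\leq C(\mathcal{B}_{N_\nq}^4+\mathcal{B}_{N_\nq}^2\mathcal{B}_{\om_\nq}^2\,\ep)$, such that $\int_0^t(T_{N_0;1}+T_{N_0;2})d\tau\leq \tfrac{\kappa}{2}\int_0^t\|\lan\pa_y\ran^{k+1}N_0\|_{L^2}^2\,d\tau + G_k(t)$.

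Assembling these estimates, subtracting $G_k$ from $E_k$, and invoking the coercivity yields a scalar inequality of the form
\begin{align*}
\tfrac{d}{dt}(E_k-G_k)\leq -\tfrac{c\kappa}{C_{L^2}^{2/k}}\,E_k^{(k+1)/k}+C\kappa\bigl((M^2+C_{L^2}^2)\,E_k+\|N_0\|_{H^1}^2\|N_0\|_{H^{k-1}}^2\bigr).
\end{align*}
Since the super-linear sink dominates the linear source in $E_k$, and since $\|N_0\|_{H^{k-1}}$ is bounded by the inductive hypothesis, a standard absorbing-set argument yields a uniform-in-time bound on $E_k$ on $[0,T_\star]$ depending only on $\|N_0(0)\|_{H^k}$, $M$, $\mathcal{B}_{N_\nq}$, and $\sqrt\ep\,\mathcal{B}_{\om_\nq}$. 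Iterating from $k=1$ to $k=s$ proves the stated bound, and the improved bound \eqref{N_0_H_s_est} follows immediately because under $\ep\leq 1/\mathcal{B}_{\om_\nq}^2$ the combination $\sqrt\ep\,\mathcal{B}_{\om_\nq}$ is at most one and may be absorbed into a universal constant.

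The main technical obstacle is the base step $k=1$, where $\|N_0\|_{H^1}$ itself appears on the right-hand side through the Sobolev embedding of $\|N_0\|_\infty$; this is resolved by observing that at $k=1$ one has $\|N_0\|_{H^{k-1}}=\|N_0\|_{L^2}\leq C_{L^2}$, so the troublesome factor $\|N_0\|_{H^1}^2\|N_0\|_{H^{k-1}}^2\leq C_{L^2}^2(C_{L^2}^2+E_1)$ is of the same linear-plus-constant form as the other source terms and is dominated by the then-quadratic dissipation $c\kappa E_1^2/C_{L^2}^2$. Care is also required to verify that Lemma \ref{lem:NZ_C_om_est} extends verbatim to every intermediate Sobolev level $k\leq s$, which is immediate since its proof controls the full $H^s_y$ norm.
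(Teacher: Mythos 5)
Your proposal is correct and follows essentially the same path as the paper: induction over Sobolev levels from the $L^2$ bound, the identity \eqref{Z_NZ_C_NZ_om} with $\mathfrak{M}=\lan\pa_y\ran^k$, Gagliardo--Nirenberg coercivity to generate a super-linear sink, Lemma \ref{lem:NZ_C_om_est} for $T_{N_0;1}, T_{N_0;2}$, and an ODE absorbing-set argument. The only minor variation is that you estimate $T_{N_0;0}$ with a tame product estimate plus $\|N_0\|_\infty$, whereas the paper uses the 1D algebra property $\|N_0\pa_y C_0\|_{H^i}\lesssim\|N_0\|_{H^i}\|\pa_y C_0\|_{H^i}\leq C_{H^{i-1}}\|N_0\|_{H^i}$, which gives a linear-in-$E_i$ source directly and thus avoids entirely the $k=1$ base-case concern you flag.
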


\begin{proof} 
We estimate the $H^i$-norms ($i\in\{1,2,...,s\}$) in the inductive fashion. Assume that we have the following estimate for some $1\leq i\leq s  $,
\begin{align}
\|N_0\|_{H^{i-1}}\leq C_{H^{i-1}}(\|N_{\text{in};0}\|_{H^s},M,\mathcal{B}_{N_\nq},\sqrt{\ep} \mathcal{B}_{\om_\nq}).\label{Induction_assumption}
\end{align}
We would like to prove that 
\begin{align}\label{induction_conclusion}
\|N_0\|_{H^i}\leq C_{H^{i}}(\|N_{\text{in};0}\|_{H^s},M,\mathcal{B}_{N_\nq},\sqrt{\ep} \mathcal{B}_{\om_\nq}). 
\end{align}
Similar to the $L^2$ case, we estimate each term in $\eqref{Z_NZ_C_NZ_om}_{\mathfrak{M}=\lan\pa_y\ran^i}$. 

The $T_{N_0;1},\, T_{N_0;2}$ terms can be estimated as follows
\begin{align}
T_{N_0;1}(t)+T_{N_0;2}(t)\leq& \frac{1}{2}\kappa\|\pa_y \lan \pa_y\ran^i N_0\|_{L^2}^2(t)+C\left(\kappa \|N_\nq \pa_y^t C_\nq\|_{H^i}^2(t)+\frac{1}{\kappa}\|N_\nq \pa_z \de_L^{-1}\om_\nq\|_{H^i}^2(t)\right)\\
=:&\frac{1}{2}\kappa\|\pa_y\lan\pa_y\ran^i N_0\|_{L^2}^2(t)+\frac{d}{dt} G_{H^i}(t),\quad G_{H^i}(0)=0.\label{NZ_C,NZ_om_Hi}
\end{align}
By Lemma \ref{lem:NZ_C_om_est},
\begin{align} 
G_{H^i}(t)\leq C \mathcal{B}_{N_\nq}^4+C \mathcal{B}_{N_\nq}^2\mathcal{B}_{\om_\nq}^2\ep,\quad \forall t\in[0, T_\star].\label{G_Hi_bound}
\end{align}

The $T_{N_0;0}$ term can be estimated using H\"{o}lder's inequality, product estimate of Sobolev functions $f,g\in H^i(\rr),\,i\geq 1$, Gagliardo-Nirenberg inequality, and the chemical gradient estimates \eqref{pa_yC_0_est_Linf}, \eqref{pa_yC_0_est_Hs}, as follows,
\begin{align}
T_{N_0;0}\leq&\kappa C\|\pa_y \lan \pa_y \ran^i N_0\|_{L^2}\|\lan \pa_y \ran^i(N_0\pa_y C_0)\|_{L^2}
\leq\kappa C\|\pa_y\lan \pa_y\ran^{i} N_0\|_{L^2}\|\lan \pa_y \ran^i\pa_y C_0\|_{L^2}\|\lan \pa_y\ran^i N_0\|_{L^2}\\
\leq&\frac{1}{4}\kappa\|\pa_y\lan\pa_y\ran^{i } N_0\|_{L^2}^2+C\kappa\|\lan \pa_y\ran^{i-1} N_0\|_{L^2}^2\|\lan \pa_y\ran^{i}N_0\|_{L^2}^2 
\leq\frac{1}{4}\kappa\|\pa_y\lan\pa_y\ran^{i } N_0\|_{L^2}^2+C\kappa C_{H^{i-1}}^2\|\lan \pa_y\ran^{i}N_0\|_{L^2}^2.\label{pa s f 0 time derivative}
\end{align}
Applying the following Gagliardo-Nirenberg inequality
\begin{align}
\|\pa_y^i f\|_{2}\leq C\|f\|_{2}^{\frac{1}{i+1}}\|\pa_y^{i+1}f\|_{2}^{\frac{i}{i+1}}
\end{align}
yields that
\begin{align}
-\|\pa_y^{i+1}N_0\|_{2}^2\leq -\frac{\|\pa_y^i N_0\|_{2}^{\frac{2i+2}{i}}}{C\|N_0\|_{2}^{\frac{2}{i}}}\leq-\frac{\|\pa_y^i N_0\|_{2}^{\frac{2i+2}{i}}}{CC_{L^2}(M,\mathcal{B}_{N_\nq},\sqrt{\ep}\mathcal{B}_{\om_\nq})^{\frac{2}{i}}}.\label{pa s f 0 time first term}
\end{align}
Combining \eqref{NZ_C,NZ_om_Hi}, \eqref{pa s f 0 time derivative} and  \eqref{pa s f 0 time first term} yields
\begin{align}
 \frac{d}{dt}\|\lan\pa_y\ran ^i N_0\|_{2}^2\leq-  \kappa\frac{\|\pa_y^i N_0\|_{2}^{2+\frac{2}{i}}}{4 C C_{L^2}^{\frac{2}{i}}}+\kappa C  C_{H^{i-1}}^2\|\lan\pa_y\ran ^i N_0\|_{2}^2+\frac{d}{dt} G_{H^i}(t).\label{ODE_pa_y_sN_0}
\end{align}
Here $C_{L^2}$ and $C_{H^{i-1}}$ only depend on $\|N_{\text{in}}\|_{H^s}, \ M, \ \mathcal{B}_{N_\nq}$ and $\sqrt{\ep}\mathcal{B}_{\om_\nq}$. Now we apply an ODE argument similar to the one in \eqref{N_0L2_est}  to derive that the quantity $\|N_0\|_{H^i}$ is uniformly bounded on the time interval $[0,T_\star]$, i.e., \eqref{induction_conclusion}. \myb{We first recall the relation $ C_i ^{-1}(\|\pa_y^i N_0\|_{L^2}^2+\|N_0\|_{H^{i-1}}^2)\leq\|\lan \pa_y \ran^i N_0\|_{L^2}^2\leq C_i (\|\pa_y^i N_0\|_{L^2}^2+\|N_0\|_{H^{i-1}}^2)$ with $ C_i\geq 1$, and the $G_{H^i}$-estimate \eqref{G_Hi_bound}. Then we distinguish between two scenarios: 
\begin{align}
a) \, \|\pa_y^i N_0(t)\|_{L^2}^2< 4 C_i(G_{H^i}(t)+C_{H^{i-1}}^2);\ \ 
b) \, \|\pa_y^i N_0(t)\|_{L^2}^2\geq 4 C_i(G_{H^i}(t)+C_{H^{i-1}}^2).\label{Case}
\end{align}
In the first scenario, the estimate \eqref{induction_conclusion} is direct. Assume that on some open time intervals, the estimate $b)$ in \eqref{Case} holds. Then the time evolution \eqref{ODE_pa_y_sN_0} implies that there exists a constant $C$ such that 
\begin{align}
\frac{d	}{dt}(\|\lan \pa_y\ran^i N_0(t)\|_2^2-G_{H^{i}}(t))\leq -\kappa\frac{(\|\lan \pa_y\ran^i N_0(t)\|_2^2-G_{H^i}(t))^{\frac{i+1}{i}}}{CC_{L^2}^{2/i}}+\kappa C C_{H^{i-1}}^2(\|\lan \pa_y\ran^i N_0(t)\|_2^2-G_{H^{i}}(t)).
\end{align}
Hence by \eqref{G_Hi_bound}, we have that 
\begin{align}
\|\lan \pa_y \ran^i N_0(t)\|_{L^2}^2\leq C(\|N_{\mathrm{in}}\|_{H^s},M,C_{L^2}, C_{H^{i-1}}, \mathcal{B}_{N_\nq},\sqrt{\ep} \mathcal{B}_{\om_\nq}). 
\end{align}
This is consistent with \eqref{induction_conclusion}.} Hence, the induction estimate \eqref{induction_conclusion} is established. Since $i$ ranges in $\{1,...,s\}$, we end up with the following
\begin{align}\label{zero mode control}
\|  N_0(t)\|_{H^s}\leq C_{H^s}(\|N_{\mathrm{in}}\|_{H^s},M,\mathcal{B}_{N_\nq},\sqrt{\ep}\mathcal{B}_{\om_\nq}), \quad \forall t\in [0,T_\star]. 
\end{align}
This concludes the proof.
\end{proof}
\begin{proof}[Proof of \eqref{Conclusion_N_0}]
To prove \eqref{Conclusion_N_0}, we choose \eqref{Choice_of_Const_4}, and 
\begin{align}
\mathcal{B}_{N_0}\geq 4 C_{H^s}(\|N_{\mathrm{in}}\|_{H^s},M, \mathcal{B}_{N_\nq}). \label{Choice_of_Const_5}
\end{align}
Here $C_{H^s}$ is defined in \eqref{N_0_H_s_est}.
\end{proof}

\begin{proof}[Proof of Proposition \ref{pro_bootstrap}]
It is enough to show that the choice of parameters  \eqref{Choice_of_Const_1}, \eqref{Choice_of_Const_2}, \eqref{Choice_of_Const_3}, \eqref{Choice_of_Const_4}, and \eqref{Choice_of_Const_5} are consistent. We choose the parameters 
\begin{align}
\mathcal{B}_{N_\nq}^2:=&C_1\|N_{\text{in}}\|_{H^s}^2,\quad  \mathcal{B}_{N_0}^2:=C_2C_{H^s}^2(\|N_{\mathrm{in}}\|_{L^1\cap H^s},\mathcal{B}_{N_\nq}),\\
\quad \mathcal{B}_{\om_\nq}^2:=&C_3(\mathcal{B}_{N_\nq}^4+\mathcal{B}_{N_0}^4),\quad \mathcal{B}_{\om_0}^2:= C_4( \mathcal{B}_{\om_\nq}^2+\mathcal{B}_{N_\nq}^4).\label{Choice_of_bootstrap_const}
\end{align}
Here $C_1,C_2,C_3, C_4$ are constants depending only on the universal constants appeared in  \eqref{Choice_of_Const_1}, \eqref{Choice_of_Const_2}, \eqref{Choice_of_Const_3}, \eqref{Choice_of_Const_4}, and \eqref{Choice_of_Const_5}. Now we summarize the choice of $\ep=\kappa/\nu$:
\begin{align}
 \ep\leq\ep_0:= \frac{1}{C_5(\mathcal{B}_{\om_\nq}^4+\mathcal{B}_{\om_0}^4+\mathcal{B}_{N_\nq}^4+\mathcal{B}_{N_0}^4)}.\label{Choice_of_ep}
\end{align} 
Here $C_5$ is a constant depending only on the universal constants appeared in  \eqref{Choice_of_Const_1}, \eqref{Choice_of_Const_2}, \eqref{Choice_of_Const_3}, \eqref{Choice_of_Const_4}. This concludes the proof of Proposition \ref{pro_bootstrap}.
\end{proof}

\appendix
\section{}
\subsection{Miscellaneous}
\begin{lem}\label{lem:F_0 and F}
Let $F\in L^p(\mathbb{T}\times\rr), \, G\in H^s(\mathbb{T}\times\rr)$, and $
F_0(y):=\frac{1}{|\mathbb{T}|}\int_{\mathbb{T}}F(z,y)dz,\ 
G_0(y):=\frac{1}{|\mathbb{T}|}\int_{\mathbb{T}}G(z,y)dz.$ The following estimates hold:
\begin{align}\label{F_0 and F}
\|F_0\|_{L^p(\rr)}\leq& \|F\|_{L^p(\mathbb{T}\times\rr)},\quad 1\leq p\leq\infty,\\
\|\lan \pa_y\ran^s G_0\|_{L^2(\rr)}\leq&C\|\lan \pa_z,\pa_y\ran^s G\|_{L^2(\mathbb{T}\times\rr)}.
\end{align}
\end{lem}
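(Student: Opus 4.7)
The plan is to prove both inequalities by elementary means, reducing each to Jensen's (or Minkowski's integral) inequality applied fiberwise in $y$, together with the fact that $y$-derivatives commute with $z$-averaging.

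For the first estimate with $1\leq p<\infty$, I would fix $y\in\rr$ and apply Jensen's inequality to the probability measure $\frac{1}{|\mathbb{T}|}\,dz$ on $\mathbb{T}$, giving
\begin{align}
|F_0(y)|^p=\left|\frac{1}{|\mathbb{T}|}\int_{\mathbb{T}}F(z,y)\,dz\right|^p\leq \frac{1}{|\mathbb{T}|}\int_{\mathbb{T}}|F(z,y)|^p\,dz.
\end{align}
Integrating in $y$ yields $\|F_0\|_{L^p(\rr)}^p\leq |\mathbb{T}|^{-1}\|F\|_{L^p(\mathbb{T}\times\rr)}^p$, and using $|\mathbb{T}|\geq 1$ (taking $\mathbb{T}$ as the standard torus of length $2\pi$) the factor is $\leq 1$. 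The case $p=\infty$ follows directly from $|F_0(y)|\leq \mathrm{ess\,sup}_z|F(z,y)|\leq \|F\|_{L^\infty}$. Alternatively, one may invoke Minkowski's integral inequality in one step, which gives the same conclusion uniformly in $p\in[1,\infty]$.

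For the second estimate, I would first observe that differentiation in $y$ commutes with $z$-averaging: for any $0\leq j\leq s$,
\begin{align}
\pa_y^j G_0(y)=\frac{1}{|\mathbb{T}|}\int_{\mathbb{T}}\pa_y^j G(z,y)\,dz=(\pa_y^j G)_0(y).
\end{align}
Applying the $p=2$ case of the first estimate to each $\pa_y^j G$ and summing over $j\in\{0,1,\dots,s\}$,
\begin{align}
\|\lan\pa_y\ran^s G_0\|_{L^2(\rr)}^2\approx\sum_{j=0}^{s}\|\pa_y^j G_0\|_{L^2(\rr)}^2\leq \sum_{j=0}^s\|\pa_y^j G\|_{L^2(\mathbb{T}\times\rr)}^2\leq C\|\lan\pa_z,\pa_y\ran^s G\|_{L^2(\mathbb{T}\times\rr)}^2.
\end{align}
A cleaner proof uses Fourier analysis on $\mathbb{T}\times\rr$: since $G_0$ is literally the $k=0$ Fourier component of $G$, one has $\wh{G_0}(\eta)=\wh G(0,\eta)$ and therefore
\begin{align}
\|\lan\pa_y\ran^s G_0\|_{L^2(\rr)}^2=\int\lan\eta\ran^{2s}|\wh G(0,\eta)|^2\,d\eta\leq\sum_{k\in\mathbb{Z}}\int\lan k,\eta\ran^{2s}|\wh G(k,\eta)|^2\,d\eta\leq C\|\lan\pa_z,\pa_y\ran^s G\|_{L^2(\mathbb{T}\times\rr)}^2.
\end{align}

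There is essentially no obstacle here; the only minor care needed is the justification of differentiating under the integral, which is standard for $G\in H^s$ via approximation by Schwartz functions, and the bookkeeping of the factor $|\mathbb{T}|^{-1/p}$ in the first bound, which is absorbed into the $\leq 1$ constant for the standard torus. I would present the Fourier-side argument for the second inequality since it cleanly explains why only the $k=0$ slice of $\wh G$ controls $\wh{G_0}$, and this perspective also motivates the $z$-average/remainder decomposition used throughout the paper.
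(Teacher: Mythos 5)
Your proposal is correct and follows essentially the same route as the paper: Jensen's inequality (equivalently Hölder, as in the paper) for the first estimate, and the Fourier-side observation that $\wh{G_0}(\eta)=\wh G(0,\eta)$ together with Plancherel for the second. The alternative elementary derivation via commuting $\pa_y$ with $z$-averaging is a nice additional remark but the argument is the same in substance.
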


\begin{proof}
Applying the H\"{o}lder's inequality yields that for $p\in(1,\infty)$,
\be\ba
\|F_0\|_{L^p(\rr)}=&\left(\int_\rr \bigg|\frac{1}{2\pi}\int_{\mathbb{T}}Fdz\bigg|^pdy\right)^{1/p}\leq\left(\int_\rr \left(\left(\int_{\mathbb{T}}|F|^pdz\right)^{1/p}\left(\int_{\mathbb{T}}(2\pi)^{-p'} dz\right)^{1/p'}\right)^pdy\right)^{1/p}\\
\leq&\left(\int_\rr \int_{\mathbb{T}}|F|^pdzdy\right)^{1/p}=\|F\|_{L^p(\mathbb{T}\times\rr)}.
\ea\ee
The proof in the $p=1,\infty$ cases are variants of the argument above. 
Applying the Fourier transform and the Plancherel equality yields
\be\ba
\|\lan \pa_y\ran^s F_0\|_{L^2(\rr)}^2=C\int_\rr \lan\eta\ran^{2s}|\widehat{F_0}|^2(\eta)d\eta
\leq{C \sum_k \int_\rr \lan k,\eta\ran^{2s}|\widehat{F}|^2(k,\eta)d\eta}=C\|\lan \pa_z,\pa_y\ran^s F\|_{L^2(\mathbb{T}\times\rr)}^2.
\ea\ee
This finishes the proof of the lemma.
\end{proof}

\begin{lem}[Chemical gradient estimates]
 Consider solution $C_0$ to the equation $(1-\pa_{yy})C_0=N_0$. The following estimates hold
 \begin{align}\label{pa_yC_0_est_Linf}
 \|\pa_y C_0\|_{L^\infty_y}\leq &\|N_0\|_{L^1_y};\\
 \|\pa_y C_0\|_{H_y^s}\leq &\|N_0\|_{H_y^{s-1}}, \, s\in \mathbb{N}_+.\label{pa_yC_0_est_Hs}
 \end{align}
 \end{lem}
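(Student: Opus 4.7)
The plan is to exploit the fact that on the full line $\R$, the operator $1-\pa_{yy}$ has an explicit Green's function, namely $G(y)=\tfrac{1}{2}e^{-|y|}$, so that the unique bounded solution to $(1-\pa_{yy})C_0 = N_0$ is given by the convolution $C_0 = G \ast N_0$. Differentiating in $y$ produces $\pa_y C_0 = G' \ast N_0$, where $G'(y) = -\tfrac{1}{2}\operatorname{sgn}(y)e^{-|y|}$.

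For the $L^\infty_y$ bound \eqref{pa_yC_0_est_Linf}, I would simply note the pointwise estimate
\begin{align*}
|\pa_y C_0(y)| = \left|\int_{\R} \tfrac{1}{2}\operatorname{sgn}(y-y')e^{-|y-y'|}\, N_0(y')\,dy'\right| \leq \tfrac{1}{2}\|N_0\|_{L^1_y} \leq \|N_0\|_{L^1_y},
\end{align*}
since the kernel $G'$ is bounded pointwise by $1/2$. This is exactly Young's convolution inequality with a factor that even gives room to spare relative to the claimed constant $1$.

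For the $H^s_y$ bound \eqref{pa_yC_0_est_Hs}, I would pass to the Fourier side. The relation $(1-\pa_{yy})C_0 = N_0$ gives $\widehat{C_0}(\eta) = \widehat{N_0}(\eta)/(1+\eta^2)$, hence
\begin{align*}
\widehat{\pa_y C_0}(\eta) = \frac{i\eta}{1+\eta^2}\,\widehat{N_0}(\eta).
\end{align*}
By Plancherel and the elementary pointwise bound
\begin{align*}
\langle \eta\rangle^{2s}\,\frac{\eta^2}{(1+\eta^2)^2} \;=\; \frac{\eta^2}{1+\eta^2}\,\langle\eta\rangle^{2(s-1)} \;\leq\; \langle\eta\rangle^{2(s-1)},
\end{align*}
valid for every $s\in\N_+$, one concludes $\|\pa_y C_0\|_{H^s_y}^2 \leq \int_{\R} \langle\eta\rangle^{2(s-1)} |\widehat{N_0}(\eta)|^2\,d\eta = \|N_0\|_{H^{s-1}_y}^2$.

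There is no genuine obstacle here; the only subtlety is the trivial algebraic observation that one factor of $\langle\eta\rangle^2$ in the numerator is absorbed by the $(1+\eta^2)^2$ in the denominator, giving back $\langle\eta\rangle^{2(s-1)}$ instead of $\langle\eta\rangle^{2s}$. Both estimates are essentially Schauder/multiplier bounds for the resolvent $(1-\pa_{yy})^{-1}$ on $\R$, and the argument would be two or three lines in final form.
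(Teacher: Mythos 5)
Your proof is correct and follows essentially the same route as the paper: the $L^\infty$ bound via the explicit Green's function $G(y)=\tfrac12 e^{-|y|}$ of $1-\pa_{yy}$ on $\R$ (the paper writes out the same kernel representation), and the $H^s$ bound via Plancherel together with the pointwise multiplier estimate $\eta^2/(1+\eta^2)^2\le\langle\eta\rangle^{-2}$. Your version is marginally more careful in tracking that the kernel bound $|G'|\le\tfrac12$ and the multiplier bound give constant $1$ as stated, whereas the paper's displayed proof carries unspecified constants $C$, but the argument and ingredients are identical.
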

 \begin{proof}
 To prove the \eqref{pa_yC_0_est_Linf}, we use the explicit solution formula to get
\begin{align} 
\|\pa_y C_0\|_{L_y^\infty}=C{\norm{\int_\rr \frac{y-y'}{|y-y'|}e^{-|y-y'|}N_0(y')dy'}_{L_y^\infty}}\leq C\|N_0\|_{L^1_y}.
\end{align}

 To prove the \eqref{pa_yC_0_est_Hs}, we use the Fourier transform and the Plancherel equality
\begin{align}
\|\pa_y C_0\|_{H^s}^2=C\int \lan \eta\ran^{2s} \bigg|\frac{i\eta}{1+|\eta|^2}\wh N_0\bigg|^2d\eta\leq C\int \lan \eta\ran^{2(s-1)}|\wh N_0|^2d\eta\leq C\|N_0\|_{H^{s-1}}^2.
\end{align}
\end{proof}
Next we provide a sketch of the proof of Theorem \ref{thm:local_existence}.
\begin{proof}[Proof of Theorem \ref{thm:local_existence}]
Standard energy argument yields the local-in-time $\|N\|_{H_{z,y}^s},\|\om\|_{H_{z,y}^s}$-estimates. We omit further details for the sake of brevity. 

{The justification of the positivity  of the cell density, i.e.,   $N\geq 0$, is as follows. We use the technique from  \cite{CarrilloCastroYaoZeng21}\myr{, page 9 (end of section 2)}. The explicit argument is as follows.  We consider the family of convex positive  functions $j_\ep$ which approximates $j(s)=\max\{-s,0\}$.  We can  choose the $j_\ep$'s to be monotonically increasing to $j$. These functions $j_\ep'=j'$ in $[-\ep,0]^c$ and $0\leq j_\ep''\leq 2\ep^{-1}$ in $[-\ep, 0]$. So $J_\ep(s):=\int_0^sj_\ep''(\sigma)\sigma d\sigma$ satisfies $|J_\ep(s)|\leq 2|s|$ for $\ep\in(0,1)$, and $\lim_{\ep\ra 0^+}J_{\ep}(s)=0$ for all $s$. Now we have that 
\begin{align*}
\frac{d}{dt}\int_{\Torus\times \rr}j_\ep(N(t,z,y))dV 
=&\int_{\Torus\times\rr}j_\ep'(N)(\kappa\de_L N-U\cdot\na_L N -\kappa \na_L\cdot (N\na_L C))dV\\
=&-\kappa\int_{\Torus\times\rr}j_\ep''(N)( |\pa_{z} N|^2+|\pa_{y}^tN |^2)dV-\int_{\Torus\times \rr}U\cdot\na_L (j_\ep(N)) dV\\
&+\kappa\int_{\Torus\times \rr} j_\ep''(N)(N\pa_{z}N\pa_{z} C +N\pa_{y}^tN \pa_{y}^tC)dV\\
\leq & \kappa\int_{\Torus\times\rr}  \pa_{z}\left(\int_0^{N}j_\ep''(s)sds\right)\pa_{z}C dV+\kappa\int_{\Torus\times\rr} \pa_{y}^t\left(\int_0^{N }j_\ep''(s)sds\right)\pa_{y}^tC dV\\
=& \kappa\int_{\Torus\times\rr}\left(\int_0^{N }j_\ep''(s)sds\right) (-\de_L C)dV
=\int_{\Torus\times\rr}\left(\int_0^{N }j_\ep''(s)sds\right) (N-C) dV. 
\end{align*}  
Therefore,
\begin{align}
\int_{\Torus\times \rr}j_{\ep}(N(t ))dV-\int_{\Torus\times \rr}j_{\ep}(N(0))dV=\int_0^t\int_{\Torus\times\rr}\left(\int_0^{N(\tau)}j_\ep''(s)sds\right) (N-C) dVd\tau. 
\end{align}
Since $|J_\ep(s)|\leq 2|s|$, we can use $4(N^2+C^2)$ as the dominator and invoke the Dominated convergence theorem and Monotone convergence theorem to get for all $[0,t]$ on which $L^2$ is bounded, 
\begin{align}
\|N_-(t)\|_{L^1_{z,y}}=&\int_{\Torus\times\rr}j(N(t))dV=\lim_{\ep\ra 0^+}\int_{\rr^2}j_\ep(N(t))dV=\lim_{\ep\ra 0^+}\int_0^t\int_{\Torus\times\rr}\left(\int_0^{N(\tau)}j_\ep''(s)sds\right) (N-C) dVd\tau\\
=&\int_0^t\int_{\Torus\times\rr}\lim_{\ep\ra 0^+}\left(\int_0^{N(\tau,X)}j_\ep''(s)sds\right) (N-C) dVd\tau=0.
\end{align}
As a result, $N\geq 0.$}
\end{proof}
\subsection{Fourier Multipliers}
In this section, we summarize the  properties of the Fourier multipliers that we employ. First, we collect some basic properties of the multipliers $M_\kappa, M_\nu$ defined in \eqref{M_N_Omega}.
\begin{lem}\label{lem:multipliers}
For $\iota\in \{\kappa,\nu\}$, the following  properties  for $M_\iota$ hold
\begin{subequations}\label{M_property_common}
\begin{align}
M_{\iota}(t,k,\eta)&=\pi^2,\quad |k|\notin(0,\iota^{-1/2}] ;\label{M_1}\\
\frac{9}{4}\pi^2\geq& M_\iota(t,k,\eta)\geq \frac{\pi^2}{4}; \label{M_bound}\\
- {\pa_t M_\iota(t,k,\eta)} \geq& \frac{\pi}{2}\frac{|k|^2}{|k|^2+|\eta-kt|^2},\quad k\neq 0;\label{M_property_common_dot_M}\\
 | {\pa_\eta M_\iota(t,k,\eta)}  |\leq &\frac{4\pi}{|k|},\quad k\neq 0, \label{M_property_common_pa_eta}\\
 \frac{\sqrt{-\pa_t M_\iota(t,k,\eta)}}{\sqrt{-\pa_t M_\iota(t,k,\xi)}}\leq&  {2} (1+|\eta-\xi|^2)^{\frac{1}{2}},\quad |k|\in (0,\iota^{-1/2}]\label{M_5}. 
\end{align}
\end{subequations}

\ifx{\bf Here we also need 
\begin{align*}
\pa_t w_k=\frac{k^2}{k^2+|\eta-kt|^2}w_k.
\end{align*}
It seems to me that we can replace $M_\star$ by $\frac{M_{\nu\, or\, \kappa}}{w_k}$.}
\fi 
Moreover, the multipliers $M_\kappa, M_\nu$ have the enhanced dissipation properties
\begin{align}\frac{1}{3\pi}\iota^{1/3} {|k|^{2/3}}\leq&-\frac{\pa_t M_\iota}{ M_\iota}(t,k,\eta)+\iota (|k|^2+|\eta-kt|^2), \quad \iota\in\{\kappa,\nu\}.\label{M_property_ED}
\end{align}
\end{lem}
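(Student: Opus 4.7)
The plan is to verify each property by direct computation from the product structure $M_\iota = W_\iota\,\mathcal W$ in \eqref{M_N_Omega}, exploiting the elementary facts that $\arctan(\mathbb{R}) = (-\pi/2,\pi/2)$ and that each factor depends on time only through the combination $s := t - \eta/k$. Properties \eqref{M_1} and \eqref{M_bound} are then immediate: at $k=0$ both indicators in \eqref{W_kappa}--\eqref{W} vanish and $M_\iota = \pi^2$, while in general $W_\iota,\mathcal W \in [\pi/2,3\pi/2]$, so $M_\iota \in [\pi^2/4,\,9\pi^2/4]$.

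For the derivative bounds I would perform a direct chain-rule computation yielding
\[
-\pa_t\mathcal W = \frac{k^2}{k^2+(\eta-kt)^2}\mathbf{1}_{k\neq 0},\qquad -\pa_t W_\iota = \frac{a}{1+a^2 s^2}\mathbf{1}_{0<|k|\leq \iota^{-1/2}},\qquad a := \iota^{1/3}|k|^{2/3},
\]
and applying the product rule $-\pa_t M_\iota = W_\iota(-\pa_t\mathcal W) + \mathcal W(-\pa_t W_\iota)$. Keeping only the first (non-negative) summand and using $W_\iota \geq \pi/2$ immediately yields \eqref{M_property_common_dot_M}. The analogous $\eta$-derivative carries an extra factor $-1/k$ from the inner derivative, and the residual $\iota^{1/3}|k|^{-1/3}$ in $|\pa_\eta W_\iota|$ is absorbed by $1/|k|$ using $a \leq 1$ on the support of the indicator, giving \eqref{M_property_common_pa_eta}. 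For the enhanced-dissipation estimate \eqref{M_property_ED} I would split into three regimes: (i) if $|k|>\iota^{-1/2}$ then $\iota |k|^2 > 1$ forces $(\iota|k|^2)^{1/3} \leq \iota|k|^2$, and the dissipation term alone suffices; (ii) if $0<|k|\leq\iota^{-1/2}$ and $a^2 s^2 \leq 1$, then $-\pa_t W_\iota/W_\iota \geq (2/(3\pi))\cdot a/2 = a/(3\pi)$ directly; (iii) if $0<|k|\leq\iota^{-1/2}$ and $a^2 s^2 > 1$, the relation unwinds to $\iota(\eta-kt)^2 > a$, again handled by the dissipation.

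The main obstacle is the ratio estimate \eqref{M_5}. The plan is first to sandwich
\[
\tfrac{\pi}{2}\bigl[(1+s^2)^{-1}+a(1+a^2 s^2)^{-1}\bigr]\;\leq\;-\pa_t M_\iota(s)\;\leq\;\tfrac{3\pi}{2}\bigl[(1+s^2)^{-1}+a(1+a^2 s^2)^{-1}\bigr]
\]
using the factor bounds \eqref{M_bound}, and then to apply the Peetre-type inequality $1+y^2 \leq 2(1+x^2)(1+(x-y)^2)$ to each of the two summands separately, with $(x,y) = (s_2,s_1)$ where $s_j = t - \eta_j/k$; the factor $a \leq 1$ is used to handle the second summand. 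This produces a ratio bound of the form $C(1 + (s_1 - s_2)^2)$ with an explicit constant $C$. The essential final step is the reduction $|s_1 - s_2| = |\eta-\xi|/|k| \leq |\eta-\xi|$, which is valid because $z \in \mathbb{T}$ forces $k$ to be a nonzero integer and hence $|k|\geq 1$ on the relevant support. Taking a square root and collecting constants then delivers \eqref{M_5}; absent the integer constraint on $k$ the right-hand side would have to carry an extra factor of $|k|^{-1}$, so this is precisely the step where the torus geometry enters.
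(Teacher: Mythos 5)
Your proposal follows essentially the same route as the paper: all five derivative and ratio estimates proceed by exactly the chain-rule computations you describe, the decomposition $-\pa_t M_\iota = \mathcal W(-\pa_t W_\iota) + W_\iota(-\pa_t\mathcal W)$ with both summands nonnegative, and the three-regime split for the enhanced-dissipation inequality is identical to the paper's. Your handling of \eqref{M_5} is a mild cosmetic variant — you quote a named Peetre inequality applied to each summand after the mediant split, while the paper expands $(t-\xi/k)^2 \leq 2(t-\eta/k)^2 + 2|\eta-\xi|^2/|k|^2$ directly — but these are the same computation. You are also right to flag the reduction $|\eta-\xi|/|k| \leq |\eta-\xi|$ as the one place where $|k|\geq 1$ (i.e.\ the torus) is essential; the paper uses it silently in passing from $4+4|\eta-\xi|^2/|k|^2$ to $4(1+|\eta-\xi|^2)$.

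Two small remarks. First, tracking your sandwich carefully gives $\sqrt{-\pa_t M_\iota(\eta)}/\sqrt{-\pa_t M_\iota(\xi)}\leq \sqrt{12}\,(1+|\eta-\xi|^2)^{1/2}$ rather than the claimed constant $2$: the upper/lower bounds $3\pi/2$ and $\pi/2$ on the factors contribute a factor $3$ which the paper's displayed chain also silently drops. Since \eqref{M_5} is only ever invoked up to a universal constant this is immaterial, but it is worth noting that neither argument actually yields the literal constant $2$. Second, for \eqref{M_1} you verify only $k=0$. In fact the statement as written is false for $|k|>\iota^{-1/2}$: there $W_\iota = \pi$ but $\mathcal W = \pi - \arctan(t-\eta/k) \neq \pi$ in general, so $M_\iota \neq \pi^2$. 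The paper only ever uses \eqref{M_1} at $k=0$ (e.g.\ in the line ``$A_\nu(t,k=0,\eta) = \pi^2\langle\eta\rangle^s$''), so your restriction is the correct reading of what the lemma should assert, but you might state explicitly that the $|k|>\iota^{-1/2}$ case is not needed and is not actually true.
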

  \begin{proof}
 First of all, the first two inequalities \eqref{M_1}, \eqref{M_bound} are consequences of the definitions \eqref{W_kappa}, \eqref{W_nu}, \eqref{W} and the boundedness of the $\arctan$-function. The time derivative of the multiplier $\mathcal{W}$ reads as follows 
\begin{align} \label{pa_t_W}
 \pa_t \mathcal {W}( t, k,\eta)=&-\frac{|k|^2}{|k|^2+|\eta-kt|^2},\quad \, k\neq 0.
\end{align} 
Hence combining this expression, and the bounds $W_\iota\geq\pi/2,\quad \pa_tW_\iota\leq 0$, we have that
\begin{align}
-\pa_t M_\iota =- \mathcal{W}\pa_t W_\iota- W_\iota\pa_t \mathcal{W}\geq\frac{\pi}{2}\frac{|k|^2}{|k|^2+|\eta-kt|^2},\quad k\neq 0.
\end{align} This completes the proof of \eqref{M_property_common_dot_M}.

To prove \eqref{M_property_common_pa_eta}, we observe that 
\begin{align}
|\pa_\eta M_\iota(t,k,\eta)|\leq&|\mathcal{W}\pa_\eta W_\iota |+| W_\iota \pa_\eta\mathcal{W}|
\leq  2\pi\ \mathbf{1}_{0<|k|\leq \iota^{-1/2}}\frac{1}{|k|}\frac{\iota^{1/3}|k|^{2/3}}{1+\iota^{2/3}|k|^{4/3}|t-\frac{\eta}{k}|^2}+2\pi\ \mathbf{1}_{k\neq 0}\frac{1}{|k|}\frac{|k|^2}{|k|^2+|\eta-kt|^2}\\
\leq &\frac{4\pi}{|k|}\mathbf{1}_{k\neq0}.
\end{align}
This concludes the proof of \eqref{M_property_common_pa_eta}. 

Next, we prove \eqref{M_5}. Direct computation yields that 
\begin{align}\label{pa_t_W_iota}
\pa_t W_\iota (t,k,\eta)=&-\frac{\iota^{1/3}|k|^{2/3}}{1+\iota^{2/3}|k|^{4/3}|t-\frac{\eta}{k}|^2}\mathbf{1}_{ |k|\in(0, \iota^{-1/2}]},\quad \iota\in \{\kappa,\nu\}.
\end{align}
For the wave number ranging in $|k|\in (0,\iota^{-1/2}]$, we invoke the expressions \eqref{pa_t_W}, \eqref{pa_t_W_iota} to obtain the  following   estimates of the quotient 
\begin{align}
\frac{-\pa_t M_\iota(t,k,\eta)}{-\pa_t M_\iota(t,k,\xi)}\leq& \frac{1+|t-\frac{\xi}{k}|^2}{1+|t-\frac{\eta}{k}|^2}+\frac{1+\iota^{2/3}|k|^{4/3}|t-\frac{\xi}{k}|^2}{1+\iota^{2/3}|k|^{4/3}|t-\frac{\eta}{k}|^2} \\
\leq&  \frac{1+2|t-\frac{\eta}{k}|^2+2|\frac{\eta-\xi}{k}|^2}{1+|t-\frac{\eta}{k}|^2}+\frac{1+2\iota^{2/3}|k|^{4/3}|t-\frac{\eta}{k}|^2+2\iota^{2/3}|k|^{4/3}|\frac{\eta-\xi}{k}|^2}{1+\iota^{2/3}|k|^{4/3}|t-\frac{\eta}{k}|^2}\\
\leq& 4+4\frac{|\eta-\xi|^2}{|k|^2}.
\end{align}
This yields \eqref{M_5}.

Finally, we prove \eqref{M_property_ED}. We recall the expressions \eqref{pa_t_W},  \eqref{pa_t_W_iota} and the bound $W_\iota\in[\pi/2,3\pi/2]$. Combining these ingredients  yields that 
\begin{align}
\frac{-\pa_t M_\iota} {M_\iota}+\iota (|k|^2+|\eta-kt|^2)\geq&\frac{ -\mathcal{W}\pa_t W_\iota  }{M_\iota}+\iota(|k|^2+|\eta-kt|^2)\\
\geq&\mathbf{1}_{0<|k|\leq \iota^{-1/2}}\ \frac{\iota^{1/3}|k|^{2/3}}{1+\iota^{2/3}|k|^{4/3}|t-\frac{\eta}{k}|^2}\ \frac{ 2}{3\pi}+\iota|k|^2\left(1+\bigg|t-\frac{\eta}{k}\bigg|^2\right).
\end{align}
There are two regimes for the wave number $k$: $|k|\in(0,\iota^{-1/2}]$ or $|k|\notin (0,\iota^{-1/2}]$. If $|k|\notin (0,\iota^{-1/2}]$, then it can be checked that $\iota|k|^2\geq \iota^{1/3}|k|^{2/3}$. Hence the result \eqref{M_property_ED} is ensured. On the other hand, if $|k|\in(0, \iota^{-1/2}]$, we estimate the above expression as follows
\begin{align}
\frac{-\pa_t M_\iota}{ M_\iota}&+\iota (|k|^2+|\eta-kt|^2)\\
\geq&  \mathbf{1}_{|t-\frac{\eta}{k}|\leq \iota^{-1/3}|k|^{-2/3}} \ \frac{\iota^{1/3}|k|^{2/3}}{1+\iota^{2/3}|k|^{4/3}|t-\frac{\eta}{k}|^2}\frac{ 2}{3\pi } +\mathbf{1}_{|t-\frac{\eta}{k}|> \iota^{-1/3}|k|^{-2/3}}\ \iota|k|^2\left(1+\bigg|t-\frac{\eta}{k}\bigg|^2\right)
\geq\frac{1}{3\pi}\iota^{1/3}|k|^{2/3}.
\end{align}
This concludes the proof of the lemma.
\end{proof}

The following lemma is a natural consequence of Lemma \ref{lem:multipliers}.
\begin{lem} For any function $f_{\neq}\in H^s$ with vanishing mean $\frac{1}{|\Torus|}\int_\Torus f_\nq dz\equiv 0$, the following estimates hold for $\iota\in\{\kappa,\nu\}$,
\begin{align}
\int_0^t \|A_\iota f_\nq\|_{L^2}^2d\tau \leq& C\iota^{-1/3}\int_0^t\left\|A_\iota \sqrt{\frac{-\pa_\tau M_\iota}{M_\iota}}f_\nq\right\|_{L^2}^2+\iota\|A_\iota\sqrt{-\de_L}f_\nq\|_{L^2}^2d\tau;\label{ED_M_Lt2Lzy2}\\
\|A_\iota\na_L \de_L^{-1} f_\nq\|_{L^2}&+
\|A_\iota\na_L (1-\de_L)^{-1} f_\nq\|_{L^2}\leq \myb{C\left\|A_\iota\frac{1}{|\pa_z|}\sqrt{\frac{-\pa_t M_\iota}{M_\iota}} f_\nq\right\|_{L^2}}.\label{Green_M_t_est}
\end{align}
Here the constant $C$ is universal.
\end{lem}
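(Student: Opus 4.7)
The plan is to reduce both estimates to pointwise inequalities on the Fourier multipliers, then conclude by Plancherel. Neither estimate requires a new commutator or energy argument: everything is already encoded in the multiplier properties of Lemma \ref{lem:multipliers}, and the vanishing-mean assumption on $f_\nq$ provides the crucial frequency-localization $|k|\geq 1$.

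For \eqref{ED_M_Lt2Lzy2}, the plan is to combine the enhanced dissipation bound \eqref{M_property_ED} with the fact that $\widehat{f_\nq}$ is supported on $k\in\mathbb{Z}\setminus\{0\}$, so $|k|^{2/3}\geq 1$. This upgrades \eqref{M_property_ED} to the pointwise inequality
\begin{align*}
1\ \leq\ |k|^{2/3}\ \leq\ 3\pi\,\iota^{-1/3}\left(-\frac{\pa_t M_\iota}{M_\iota}+\iota(|k|^2+|\eta-kt|^2)\right),\quad k\neq 0.
\end{align*}
Multiplying by $|A_\iota(t,k,\eta)|^2|\widehat{f_\nq}(t,k,\eta)|^2$, integrating in $(k,\eta)$ and in time, and using the Fourier symbol $\sqrt{|k|^2+|\eta-kt|^2}$ of $\sqrt{-\de_L}$, I obtain \eqref{ED_M_Lt2Lzy2} with a universal constant.

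For \eqref{Green_M_t_est}, I would compute the Fourier symbols explicitly:
\begin{align*}
\widehat{\na_L\de_L^{-1}f_\nq}=\frac{-i(k,\eta-kt)}{|k|^2+|\eta-kt|^2}\widehat{f_\nq},\qquad \widehat{\na_L(1-\de_L)^{-1}f_\nq}=\frac{-i(k,\eta-kt)}{1+|k|^2+|\eta-kt|^2}\widehat{f_\nq}.
\end{align*}
The modulus of each symbol is bounded by $(|k|^2+|\eta-kt|^2)^{-1/2}$. Then combining the lower bound \eqref{M_property_common_dot_M} on $-\pa_t M_\iota$ with the upper bound \eqref{M_bound} on $M_\iota$, one gets the pointwise inequality
\begin{align*}
\frac{1}{|k|^2+|\eta-kt|^2}\ \leq\ \frac{C}{|k|^2}\left(-\frac{\pa_t M_\iota}{M_\iota}\right),\qquad k\neq 0,
\end{align*}
where $C$ depends only on the universal constants $\pi/2$ and $9\pi^2/4$. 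Multiplying by $|A_\iota|^2|\widehat{f_\nq}|^2$, integrating, and applying Plancherel yields \eqref{Green_M_t_est}.

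There is no real obstacle here; the only mild subtlety is that the second operator $\na_L(1-\de_L)^{-1}$ has the denominator $1+|k|^2+|\eta-kt|^2$, but since this exceeds $|k|^2+|\eta-kt|^2$ the same pointwise bound applies. Both estimates are therefore immediate consequences of \eqref{M_property_ED}, \eqref{M_property_common_dot_M}, and \eqref{M_bound}, combined with the frequency-localization $k\neq 0$.
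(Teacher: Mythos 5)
Your proposal is correct and follows exactly the approach the paper intends: the paper's proof is a one-liner citing \eqref{M_property_ED}, \eqref{M_bound}, \eqref{M_property_common_dot_M}, and Plancherel, and your computation simply spells out the pointwise symbol inequalities that these citations are standing in for. The key observations you make — that $|k|\geq 1$ for nonzero modes so \eqref{M_property_ED} absorbs the constant $1$, and that the symbols of $\na_L\de_L^{-1}$ and $\na_L(1-\de_L)^{-1}$ both have modulus at most $(|k|^2+|\eta-kt|^2)^{-1/2}$, which is controlled by $\frac{1}{|k|}\sqrt{-\pa_t M_\iota/M_\iota}$ via \eqref{M_property_common_dot_M} and \eqref{M_bound} — are precisely what the paper is invoking.
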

\begin{proof}
The inequality \eqref{ED_M_Lt2Lzy2} follows from the property \eqref{M_property_ED} and the Plancherel equality. The inequality \eqref{Green_M_t_est} follows from the properties  \eqref{M_bound},  \eqref{M_property_common_dot_M} and the Plancherel equality.
\end{proof}

The product rule for the multiplier $M_\iota$ is contained in the next lemma:
\begin{lem} Consider the multipliers $M_\iota,\, \iota\in\{\kappa,\nu\}$. For two functions $f,g\in H^s(\Torus\times \rr), \, s> 1$, we have the following product rule
\begin{align}\label{M_product_rule_Hs}
\|M_\iota(fg)\|_{H^s}\leq &C\|M_\iota f\|_{H^s}\|M_\iota g\|_{H^s}.
\end{align}
Similarly, we have the following product rule for $A_\iota,\, \iota\in\{\kappa,\nu\},\, s> 1$,
\begin{align}\label{A_product_rule_Hs}
\|A_\iota (fg)\|_{L^2}\leq C\|A_\iota f\|_{L^2}\|A_\iota g\|_{L^2}.
\end{align}
\end{lem}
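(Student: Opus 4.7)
The plan is to reduce both inequalities to the classical Sobolev algebra property on $\Torus\times\rr$ together with two key subadditivity features of the multipliers.

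First I would handle the $M_\iota$ inequality. Since $M_\iota(t,k,\eta)\in[\pi/2,3\pi/2]$ by \eqref{M_bound}, Plancherel gives $\|M_\iota h\|_{H^s}\approx \|h\|_{H^s}$ uniformly in $t$. Thus
\begin{equation*}
\|M_\iota(fg)\|_{H^s}\leq C\|fg\|_{H^s}\leq C\|f\|_{H^s}\|g\|_{H^s}\leq C\|M_\iota f\|_{H^s}\|M_\iota g\|_{H^s},
\end{equation*}
where the middle inequality is the standard algebra estimate for $H^s(\Torus\times\rr)$ valid for $s>d/2=1$ (Moser/Kato--Ponce). This takes care of \eqref{M_product_rule_Hs}.

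For the $A_\iota$ inequality \eqref{A_product_rule_Hs}, the new feature is the multiplier $e^{\delta\kappa^{1/3}|k|^{2/3}t}$. I would work on the Fourier side, writing $\widehat{fg}(k,\eta)=\hat f\ast\hat g$ and exploiting two subadditivity facts. First, since $x\mapsto x^{2/3}$ is concave on $[0,\infty)$ with $(x+y)^{2/3}\leq x^{2/3}+y^{2/3}$, for any splitting $k=k_1+k_2$, $\eta=\eta_1+\eta_2$ one has $e^{\delta\kappa^{1/3}|k|^{2/3}t}\leq e^{\delta\kappa^{1/3}|k_1|^{2/3}t}\,e^{\delta\kappa^{1/3}|k_2|^{2/3}t}$. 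Second, by the standard triangle inequality $\langle k,\eta\rangle^s\leq C_s(\langle k_1,\eta_1\rangle^s+\langle k_2,\eta_2\rangle^s)$. Combining these with the bound \eqref{M_bound} on $M_\iota$ gives the pointwise estimate
\begin{equation*}
A_\iota(t,k,\eta)|\widehat{fg}(k,\eta)|\leq C\sum_{k_2}\int\bigl[A_\iota(t,k_1,\eta_1)e_{k_2}|\hat g(k_2,\eta_2)|+e_{k_1}|\hat f(k_1,\eta_1)|A_\iota(t,k_2,\eta_2)\bigr]\,d\eta_2,
\end{equation*}
with $e_\ell:=e^{\delta\kappa^{1/3}|\ell|^{2/3}t}$ and $(k_1,\eta_1)=(k-k_2,\eta-\eta_2)$. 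Taking $L^2_{k,\eta}$ and applying Young's convolution inequality $L^2\ast L^1\to L^2$ to each summand, I reduce the task to controlling $\|e_\ell|\hat g(\ell,\zeta)|\|_{L^1_{\ell,\zeta}}$ by $\|A_\iota g\|_{L^2}$ (and symmetrically for $f$). This follows from Cauchy--Schwarz,
\begin{equation*}
\int e_\ell|\hat g(\ell,\zeta)|\,d\ell\,d\zeta=\int\frac{1}{\langle\ell,\zeta\rangle^s}\cdot \langle\ell,\zeta\rangle^s e_\ell|\hat g|\,d\ell\,d\zeta\leq \|\langle\cdot,\cdot\rangle^{-s}\|_{L^2(\mathbb Z\times\rr)}\|A_\iota g\|_{L^2},
\end{equation*}
which is finite exactly when $s>1$ (this is where the hypothesis $s>1$ is used in 2D).

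Chaining the estimates yields $\|A_\iota(fg)\|_{L^2}\leq C\|A_\iota f\|_{L^2}\|A_\iota g\|_{L^2}$, as required. The only nontrivial step is the simultaneous subadditivity of the exponential and polynomial weights, which is clean but essential; everything else is Young and Cauchy--Schwarz. I anticipate no substantial obstacle beyond bookkeeping to ensure the constant depends only on $s$ and the universal bounds on $M_\iota$.
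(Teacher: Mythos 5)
Your proposal is correct and follows essentially the same route as the paper's proof: for \eqref{M_product_rule_Hs} you invoke the uniform equivalence $\|M_\iota h\|_{H^s}\approx\|h\|_{H^s}$ from \eqref{M_bound} together with the Sobolev algebra property for $s>1$, and for \eqref{A_product_rule_Hs} you use subadditivity of $|k|^{2/3}$ and of $\lan k,\eta\ran^s$, Young's convolution inequality $L^2\ast L^1\to L^2$ on $\mathbb{Z}\times\rr$, and the Cauchy--Schwarz/Sobolev trick $\|e_\cdot\,\wh g\|_{L^1}\leq\|\lan\cdot\ran^{-s}\|_{L^2}\|A_\iota g\|_{L^2}$. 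The paper phrases the last step slightly differently (separating the $\eta$-integral and the $k$-sum), but this is an equivalent bookkeeping of the same estimate.
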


\begin{proof} First recall the product rule for the usual Sobolev functions on $\Torus\times \rr$:
\begin{align}
\|fg\|_{H^s}\leq C\|f\|_{H^s}\|g\|_{H^s},\quad s>1 .
\end{align}On the other hand, we recall that the bound \eqref{M_bound} yields that
\begin{align}
\|f\|_{H^s}\approx\|M_\iota f\|_{H^s}, \quad\|g\|_{H^s}\approx\|M_\iota g\|_{H^s},\, \iota\in{\{\kappa,\nu\}}.
\end{align}
Combining these estimates yields the result \eqref{M_product_rule_Hs}. 

To prove the product estimate \eqref{A_product_rule_Hs}, we observe the following relation for $x,y\geq 0,$
\begin{align}
|x+y|^{2/3}\leq |x|^{2/3}+|y|^{2/3}.
\end{align}
As a result, we have that
\begin{align}
e^{\delta \kappa^{1/3}|k|^{2/3}t}\leq e^{\delta \kappa^{1/3}|k-\ell|^{2/3}t}e^{\delta \kappa^{1/3}|\ell|^{2/3}t},\quad \forall k,\ell\in \mathbb{Z}.
\end{align}
Next we combine the above relation and the bounds of $M_\iota$ \eqref{M_bound} to derive the following,
\begin{align}
\|&A_\iota (f g)\|_{L^2}^2\\
\leq&\sum_{k\in \mathbb{Z}}\int e^{2\delta \kappa^{1/3}|k|^{2/3}t}M_{\iota}^2(k,\eta)\bigg|(1+|k|^2+|\eta|^2)^{ \frac{s}{2}}\sum_{\ell\in \mathbb{Z}}\int |\widehat{f}(k-\ell,\eta-\xi)|\ |\widehat {g}(\ell,\xi)| d\xi\bigg|^2 d\eta\\
\leq & C \sum_{k\in \mathbb{Z}}\int\bigg|\sum_{\ell\in \mathbb{Z}}\int \left(e^{\delta \kappa^{1/3}|k-\ell|^{2/3}t}M_\iota(k-\ell,\eta-\xi)(1+|k-\ell|^2+|\eta-\xi|^2)^{\frac{s}{2}}|\widehat{f}(k-\ell,\eta-\xi)|\right)\\
&\quad\quad\quad\quad\quad\quad\quad\quad\times\left( e^{\delta \kappa^{1/3}|\ell|^{2/3}t} |\widehat {g}(\ell,\xi)|\right) d\xi\bigg|^2 d\eta\\
&+ C \sum_{k\in \mathbb{Z}}\int\bigg|\sum_{\ell\in \mathbb{Z}}\int \left(e^{\delta \kappa^{1/3}|\ell|^{2/3}t}M_\iota(\ell,\xi)(1+| \ell|^2+|\xi|^2)^{\frac{s}{2}}|\widehat{g}( \ell,\xi)|\right)\left( e^{\delta \kappa^{1/3}|k-\ell|^{2/3}t} |\widehat {f}(k-\ell,\eta-\xi)|\right) d\xi\bigg|^2 d\eta.
\end{align}
Now we apply the Young's convolution inequality, H\"older's inequality and {the} inequality $\sum_{k\in \mathbb Z}\|\wh F_k(\cdot)\|_{L^1_\eta}\leq C(\sum_{k\in \mathbb Z}\|\lan\cdot\ran^s\wh F_k(\cdot)\|_{L^2_\eta}^2)^{1/2},\, s> 1$ to derive that
\begin{align}
\|A_\iota(fg)\|_{L^2}\leq &C\left(\|A_\iota f \|_{L^2}\sum_{k\in\mathbb{Z}}\left\| \myb{e^{\delta \kappa^{1/3}|k|^{2/3} t}\wh g_k(\cdot)}\right\|_{L^1_\eta}+\|A_\iota g\|_{L^2}\sum_{k\in\mathbb{Z}}\left\|e^{\delta\kappa^{1/3}|k|^{2/3}t}\wh f_k(\cdot)\right\|_{L^1_\eta}\right)\\
\leq& C\|A_\iota f\|_{L^2}\ \|A_\iota g\|_{L^2}.
\end{align}
This concludes the proof of the estimate \eqref{A_product_rule_Hs} and completes the proof of the lemma.
\end{proof}

In the proof, the following commutator estimate is needed:
\begin{lem}[Commutator estimates] The following commutator estimate concerning $M_\iota$ is satisfied
\begin{align}
|M_\iota (t,k,\eta)\lan k,\eta\ran^{s}-M_\iota(t,k,\xi)\lan k,\xi\ran^s |\leq C\frac{|\eta-\xi|}{|k|}(\myb{\lan \eta-\xi\ran^{s}}+\lan k,\xi\ran^{s}),\quad k \nq 0.\label{commutator_estimate}
\end{align}
\end{lem}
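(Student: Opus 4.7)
The plan is to use the standard commutator splitting
\begin{align*}
M_\iota(t,k,\eta)\lan k,\eta\ran^{s}-M_\iota(t,k,\xi)\lan k,\xi\ran^{s}
&=\bigl[M_\iota(t,k,\eta)-M_\iota(t,k,\xi)\bigr]\lan k,\eta\ran^{s}\\
&\quad+M_\iota(t,k,\xi)\bigl[\lan k,\eta\ran^{s}-\lan k,\xi\ran^{s}\bigr],
\end{align*}
and estimate the two differences separately using the multiplier bounds already collected in Lemma \ref{lem:multipliers}.

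For the first piece, I would apply the mean value theorem in the frequency variable together with the derivative bound \eqref{M_property_common_pa_eta} to get $|M_\iota(t,k,\eta)-M_\iota(t,k,\xi)|\leq \frac{4\pi|\eta-\xi|}{|k|}$. Then the triangle inequality $\lan k,\eta\ran\leq \lan k,\xi\ran+\lan\eta-\xi\ran$, raised to the power $s\geq 0$, yields $\lan k,\eta\ran^{s}\leq C(\lan k,\xi\ran^{s}+\lan\eta-\xi\ran^{s})$, which is exactly the RHS form.

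For the second piece, I would use the pointwise bound $M_\iota\leq \tfrac{9\pi^{2}}{4}$ from \eqref{M_bound}, and then estimate the Japanese-bracket difference by $|\lan k,\eta\ran^{s}-\lan k,\xi\ran^{s}|\leq C|\eta-\xi|\bigl(\lan k,\eta\ran^{s-1}+\lan k,\xi\ran^{s-1}\bigr)$, again via the mean value theorem applied to $t\mapsto(1+k^{2}+t^{2})^{s/2}$. The missing factor $1/|k|$ is then produced by the elementary observation that $|k|\leq \lan k,\xi\ran$ (and similarly $|k|\leq \lan k,\eta\ran$), so $\lan k,\xi\ran^{s-1}\leq \lan k,\xi\ran^{s}/|k|$ and likewise for $\lan k,\eta\ran^{s-1}$; absorbing $\lan k,\eta\ran^{s}$ by the triangle inequality as above finishes this half.

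The two pieces combined give exactly \eqref{commutator_estimate}. No step is really an obstacle here; the only small subtlety is ensuring that the $1/|k|$ factor in the target estimate comes out of both pieces of the split, and that is why the elementary bound $|k|\leq \lan k,\xi\ran$ (valid for $k\nq 0$) is the decisive ingredient in the second piece, while in the first piece the $1/|k|$ is already built into the $\eta$-derivative bound \eqref{M_property_common_pa_eta} on $M_\iota$.
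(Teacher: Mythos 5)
Your proof is correct and follows essentially the same route as the paper: both use the standard two-term telescope, the mean value theorem together with the $\partial_\eta M_\iota$ bound \eqref{M_property_common_pa_eta} for the $M_\iota$-difference, the mean value theorem on $t\mapsto(1+k^2+t^2)^{s/2}$ plus the elementary bound $|k|\leq\lan k,\cdot\ran$ for the bracket difference, and finally the triangle inequality to convert $\lan k,\eta\ran^s$ into $\lan k,\xi\ran^s+\lan\eta-\xi\ran^s$. The only cosmetic difference is which leg of the telescope carries $\lan k,\eta\ran^s$ versus $\lan k,\xi\ran^s$: the paper pairs $M_\iota(\eta)$ with the bracket difference and $\lan k,\xi\ran^s$ with the $M_\iota$-difference, while you do the symmetric pairing; both lead to the same estimate by the same ingredients.
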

\begin{proof}

Here the difference $|M_\iota (t,k,\eta)\lan k,\eta\ran^{s}-M_\iota(t,k,\xi)\lan k,\xi\ran^s|$ can be decomposed as follows:
\begin{align}\label{Commutator_1_2}
\quad \quad M_\iota&(t,k,\eta)\lan k,\eta\ran^s-M_\iota(t,k,\xi)\lan k,\xi\ran^s\\
=&M_\iota(t,k,\eta)\left((1+k^2+\eta^2)^{s/2}-(1+k^2+\xi^2)^{s/2}\right)+(M_\iota(t,k,\eta)-M_\iota(t,k, \xi))(1+k^2+ \xi^2)^{s/2}\mathbf{1}_{ k\neq 0}\\
=:&\mathcal{T}_1+\mathcal{T}_2.
\end{align}
For the first term in \eqref{Commutator_1_2}, one applies the mean value theorem to obtain that there exists $\theta\in[0,1]$, such that the  following estimate holds
\begin{align}
|\mathcal{T}_1|=&\bigg|M_\iota(k,\eta)\frac{s}{2}(1+k^2+((1-\theta)\eta+\theta\xi)^2)^{\frac{s}{2}-1})2((1-\theta)\eta+\theta\xi)(\eta-\xi)\bigg|\\
\leq&C\bigg((1+k^2+ \xi^2)^{\frac{s-1}{2}}+(1+k^2+\eta^2)^{\frac{s-1}{2}}\bigg)|\eta-\xi|\myb{\leq C\frac{|\eta-\xi|}{|k|}(\lan k,\xi\ran^s+\lan  \eta-\xi\ran^s)}.
\end{align}
To estimate the $\mathcal{T}_2$ term in \eqref{Commutator_1_2}, we apply the property \eqref{M_property_common_pa_eta} and the mean value theorem to obtain that
\begin{align}
|\mathcal{T}_2|\leq\frac{C|\eta-\xi|}{|k|}(1+k^2+\xi^2)^{\frac{s}{2}}\mathbf{1}_{k\neq 0}.
\end{align}
Combining the two estimates and \eqref{Commutator_1_2}, we obtain \eqref{commutator_estimate}. 
The proof of the lemma is finished.
\end{proof}

\small\footnotesize
\bibliographystyle{abbrv}
\bibliography{nonlocal_eqns,JacobBib,SimingBib}

\end{document}